\documentclass{amsart}

\usepackage{amsmath,amssymb,enumerate, graphicx}

\usepackage[latin1]{inputenc}
\pagestyle{headings}

\providecommand{\Phi}{\textbf{\Phi}}

\newcommand{\J}{\mathcal{J} }

\newcommand{\F}{\mathcal{F }}

\newcommand{\N}{\mathbb{N} }

\newcommand{\Erond}{\mathcal{E}}

\renewcommand{\H}{\mathcal{H}}

\newcommand{\M}{\mathcal{M}}

\renewcommand{\P}{\mathbb{P}}

\providecommand{\sup}{\mathrm{sup}}

\providecommand{\inf}{\mathrm{inf}}

\providecommand{\dim}{\mathrm{dim} \,}

\providecommand{\C}{\mathcal{C}} 

\renewcommand{\L}{\mathcal{L}}

\providecommand{\G}{\mathcal{G}} \providecommand{\A}{\mathcal{A}}

\providecommand{\R}{\mathbb{R}}

\providecommand{\F}{\mathbb{F}}

 \providecommand{\E}{\mathbb{E}}

\providecommand{\P}{\mathbb{P}} \providecommand{\N}{\mathbb{N}}

\providecommand{\Z}{\mathbb{Z}}
\providecommand{\Q}{\mathbb{Q}}

\newtheorem{theo}{Theorem}[section]

\newtheorem{lem}[theo]{Lemma}

\newtheorem{souslem}[theo]{Sublemma}

\newtheorem{prop}[theo]{Proposition}

\newtheorem{rem}[theo]{Remark}

\newtheorem{defi}[theo]{Definition}

\begin{document}

\title[ ]{\textbf{Existence
of graphs with sub exponential  transitions probability decay and
applications .}}

\author{Cl\'ement Rau}

\date{}

\begin{abstract}

 In this paper, we  present a complete proof of the
construction of  graphs with
bounded valency such that the simple random
walk has  a return probability  at time $n$ at the origin of order
$exp(-n^{\alpha}),$ for fixed $\alpha \in [0,1[$ and with Folner function $exp(n^{
\frac{2\alpha}{1-\alpha}})$.  We begin by giving a more detailled proof
of this result contained in  (see \cite{ershdur}).\\    
In the second part, we give an application of the existence of such graphs.
We obtain bounds of the correct order for some functional of the local
time of a simple random walk on an infinite cluster on the percolation
model.

\end{abstract}

\maketitle \tableofcontents

\section{Introduction and results}

 A graph $G$ is a couple  $(V(G),E(G))$, where  
 $V(G)$  stands for the set of  vertices  of $ G$  et
 $E(G)$  stands for the set of edges of $G$. All
 graphs
 $G$ which are considered here  are infinite and have bounded geometry
 and we denote by $\nu(g)$ the number of neighbors of $g$
 in $G$.\\
We study the following random walk $X$ on $G$ defined
by:
 \begin{eqnarray}\label{eq1}
 \left\lbrace
\begin{array}{l}
  X_0=g, \\
 \P(X_{n+1}=b|X_n=a) = \frac{1}{\nu(a) +1} (1_{\{(a,b)\in E(G) \}} + 
 1_{\{a=b \}} ) \\
\end{array}
\right.
\end{eqnarray}
The random walk $X$ jumps uniformly on the set of
points formed by  the point where the walker is and
his neighbors. Thus $X$ admits reversible measures which are
proportionnal to $m(x)=\nu(x) +1$.\\

In this context,  the  transition probabilties  are linked by the isoperimetric
profile. For a graph $G$ and for a
subset $A$ of $ G $, we introduce  the boundary of $A$ relatively to
graph $G$ defined by   $$ \partial_G A=\{ (x,y)\in E(G);  \ x \in A  \ \text{et} \ y\in V(G)-A    \}.$$
 Actually, we will rather work with  F\o lner function to deal
 with isoperimetry. Let  $G$ be a graph, we note  $Fol_G$
  the  F\o lner function of  $G$ defined by:
$$ Fol_G(k) = \min \{ |U|; \ U\subset V(G)  \ \text{et} \  
 \frac{|\partial_{G} U|}{ |U|}\leq \frac{1}{k} \}.$$
If $G'\subset G$ is a subgraph of $G$,  we will use the
Folner function of $G'$ relatively to $G$ defined
by:$$ Fol_{G'}^G(k) = \min \{ |U|; \ U\subset V(G)  \ \text{et} \  
 \frac{|\partial_{G} U|}{ |U|}\leq \frac{1}{k} \}.$$
We have the following proposition (see coulhon
\cite{coulhon})
 \begin{prop} \label{propfol} Let $m_0=\inf_{V(U)} \
 m>0$ and $X$ be the random walk defined by (\ref{eq1}).   
 Assume that  $Fol(n) \geq F(n)$ with  $F$ a  non negative and non
 decreasing function, then 
 $$\sup_{x,y} \P(X_{n}=y|X_0=x)   \preceq  v(n),$$ 
 where  $v$ satisfies :
 $$\left\lbrace
\begin{array}{l}
  v'(t)= -\frac{ v(t)}{8 (F^{-1}(4/v(t) ) )^2},\\
 v(0)=1/m_0.\\
\end{array}
\right. $$
 \end{prop}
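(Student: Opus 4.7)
I would follow the classical Coulhon--Grigor'yan route: F\o lner bound $\Rightarrow$ isoperimetric inequality $\Rightarrow$ Faber--Krahn inequality $\Rightarrow$ Nash-type differential inequality for the semigroup $\Rightarrow$ pointwise heat-kernel bound. Set
\[
\til v(t) \;:=\; \|P^t\|_{L^1(m)\to L^\infty(m)} \;=\; \sup_{x,y}\frac{\P(X_t=y\mid X_0=x)}{m(y)},
\]
the identity being reversibility of (\ref{eq1}); note $\til v(0)\le 1/m_0$.

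The first two steps are essentially bookkeeping. Unfolding the definition of $Fol_G$ gives, by contrapositive, that every finite $U$ satisfies
\[
\frac{|\partial_G U|}{|U|}\;\ge\;\frac{1}{F^{-1}(|U|)}.
\]
Cheeger's inequality for the reversible chain (\ref{eq1}) then yields the Faber--Krahn bound $\lambda_1^{\mathrm{Dir}}(\Omega)\ge c/F^{-1}(|\Omega|)^2$ on every finite $\Omega\subset V(G)$, with $c$ absolute (the factor $m_0>0$ in the hypothesis is what allows passage between counting measure and $m$).

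The analytic core of the argument is converting Faber--Krahn into a differential inequality on $\til v$. For $f\ge 0$ with $\|f\|_1=1$, set $u_t=P^tf$ and use $\frac{d}{dt}\|u_t\|_2^2=-2\Erond(u_t,u_t)$ together with the level-set truncation $w_t=(u_t-\|u_t\|_\infty/2)_+$, whose support has measure $\le 2/\|u_t\|_\infty$ by Markov; applying Faber--Krahn on this support one obtains
\[
\frac{d}{dt}\|u_t\|_2^2 \;\le\; -\frac{c\,\|u_t\|_2^2}{F^{-1}(c'/\|u_t\|_\infty)^2}.
\]
Choosing $f$ as an (approximate) extremizer of $\|P^{t/2}\|_{1\to 2}$, using $\|u_t\|_\infty\le \til v(t)$ and the semigroup identity $\til v(2t)=\|P^t\|_{1\to 2}^2$, this closes on $\til v$ itself; calibrating the truncation height to $\|u_t\|_\infty/2$ is what produces the specific constants $8$ and $4$ in the stated ODE.

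The main obstacle is this truncation calibration: the height must be chosen so that (i) the support shrinks at the exact rate prescribed by $\|u_t\|_\infty^{-1}$, and (ii) a definite fraction of $\|u_t\|_2^2$ survives to $w_t$, so that Faber--Krahn on the small set $\{w_t>0\}$ still dominates the full Dirichlet form $\Erond(u_t,u_t)$. A secondary technical point is the continuous/discrete time mismatch: although $X$ is a discrete-time walk, the ODE is continuous; this is handled either by subordination or by verifying that for the lazy chain (\ref{eq1}) the discrete Dirichlet form satisfies the same Nash inequality as a continuous-time generator up to harmless multiplicative factors.
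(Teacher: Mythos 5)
The paper offers no proof of this proposition at all: it is quoted directly from Coulhon \cite{coulhon}, whose argument is exactly the route you outline (F\o lner bound $\Rightarrow$ isoperimetric/Faber--Krahn inequality $\Rightarrow$ Nash-type differential inequality $\Rightarrow$ the ODE bound for $v$), with the discrete-time, lazy-chain issue handled as you indicate. So your proposal is correct in substance and takes essentially the same approach as the source the paper relies on; the only work left is the standard bookkeeping you already flag (generalized inverse of $F$, truncation constants, discrete-to-continuous comparison), none of which affects the $\preceq$ statement.
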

(We recall  that $a_n\preceq b_n$ if there exists
constants $c_1$ and $c_2$ such that for all $n\geq 0,
\  a_n \leq c_1b_{c_2 n} $  and $a_n \approx b_n $ if $a_n\preceq b_n$
and $a_n\succeq b_n$.) \\
For example, we retrieve  that in $ \Z^d$, the random walk $X$
defined above has transitions decay at time $n$ less
than $n^{-d/2}$ and in   $\F_2$ the Cayley graph of
the free group with two elements, the transition decay
of the random walk are less than $e^{-n}$.
A natural question is to know if there  exists  graphs
with intermediate transitions decay.  Some  others motivations can be
found in section \ref{appli}.\\

From $\Z$, one can perhaps adjust  some weigths on
edges  to get
the expected decay  but we  look after a graph with no
weigths.Indeed,  there are combinatorics
arguments in section \ref{appli} that will not work
if any weigths are present.

Our main result is :
\begin{prop} \label{existence} Let  $\alpha \in [0;1[,$ 
$F:=e^{x^{\frac{2\alpha}{ 1-\alpha} }}$ and $\sigma(n):
=e^{-n^{\alpha}} .$ There  exists
a  graph $D_F=(V(D_F),E(D_F))$  with bounded valency
such that :\\
(i) $Fol_{D_F} \approx F$,\\
(ii) there exists a point  $d_0 \in V(D_F)$ such that, for all  $n,\ p^{D_F}_n(d_0,d_0) \approx \sigma(n)$,\\
 where  $p^{D_F}_n(\ ,\ )$ stands for the transitions
 probability of the random walk $X$ defined
 above when $G=D_F$. 

\end{prop}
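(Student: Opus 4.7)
The plan has three substantive pieces: (a) an explicit construction of $D_F$, (b) the Folner estimate $Fol_{D_F} \approx F$, and (c) matching upper and lower bounds on the return probability $p^{D_F}_n(d_0, d_0) \approx \sigma(n)$. Following the idea of \cite{ershdur}, I would build $D_F$ hierarchically: start from a rooted binary tree $T$ and, at each vertex $v$ of depth $h$, graft a bounded-valency appendage $A_h$ (a cycle, a path, or recursively a rescaled earlier stage of the construction) whose cardinality $|A_h|$ is calibrated so that the ``fattened ball'' $\tilde B_R := \{v \in T : \mathrm{depth}(v) \leq R\} \cup \bigcup_{\mathrm{depth}(v) \leq R} A_{h(v)}$ has cardinality of order $F(R)$ and combinatorial boundary of order $F(R)/R$. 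Taking $d_0$ to be the root of $T$, bounded valency is immediate from the construction.

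For (b), the upper bound $Fol_{D_F}(k) \leq C F(ck)$ follows by using $\tilde B_{R(k)}$ as an explicit trial Folner set. The matching lower bound is the isoperimetric heart of the construction: for any $U$ with $|\partial_{D_F} U|/|U| \leq 1/k$, a cut argument along the tree skeleton forces $U$ to absorb a definite fraction of every appendage it meets up to a threshold depth $R'(k)$, and hence $|U| \gtrsim F(ck)$. The key point is that a single-edge cut near the root of $T$ separates an exponentially large number of appendages, so cutting is never cheap compared with adding vertices.

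For (c), the upper bound $p^{D_F}_n(d_0, d_0) \preceq \sigma(n)$ follows immediately from (b) and Proposition \ref{propfol}: the ansatz $v(t) = e^{-ct^\alpha}$ solves the Coulhon ODE with $F^{-1}(y) = (\log y)^{(1-\alpha)/(2\alpha)}$, and gives $v(t) \approx \sigma(t)$. For the matching lower bound I would use the Dirichlet-form estimate
\[
p^{D_F}_n(d_0, d_0) \geq c\, |B_R|^{-1} \exp\bigl(-C\, n\, (1 - \lambda_1(P_R))\bigr),
\]
where $B_R$ is the graph-distance ball of radius $R$ around $d_0$, $P_R$ is the walk killed outside $B_R$, and $\lambda_1(P_R)$ its top eigenvalue. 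Using $|B_R| \approx F(R)$ and a test-function bound $1 - \lambda_1(P_R) \lesssim R^{-2}$ (obtained by a piecewise-linear function along a long path embedded in one of the appendages $A_h \subset B_R$), optimising in $R$ the expression $R^{\beta} + nR^{-2}$ with $\beta = 2\alpha/(1-\alpha)$ gives the balance $R \approx n^{1/(\beta+2)}$ and minimum value of order $n^\alpha$, so $p^{D_F}_n(d_0, d_0) \succeq \sigma(n)$.

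The main obstacles are the two ``sharp'' estimates: the isoperimetric lower bound for $Fol_{D_F}$, which must rule out exotic Folner sets that ignore the hierarchy of $D_F$; and the spectral gap estimate $1 - \lambda_1(P_R) \lesssim R^{-2}$, which requires the appendages to embed paths of length comparable to $R$ in graph distance. Their joint validity is precisely what pins down the calibration $|A_h| \approx F(h+1)/F(h)$, and is responsible for the identity $\beta = 2\alpha/(1-\alpha)$ linking the Folner exponent to the return-probability exponent through Coulhon's ODE.
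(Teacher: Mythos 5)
Your construction cannot yield either (i) or (ii), so the gap is not in the two ``sharp estimates'' you flag but in the choice of graph itself. Any appendage $A_h$ grafted onto the tree by a bounded number of edges is a test set for the F\o lner function: its edge boundary in $D_F$ is $O(1)$ while its cardinality is $|A_h|$, and this is true whatever the internal structure of $A_h$ (path, cycle, expander, recursive copy), since $\partial_{D_F}A_h$ consists only of the attaching edges. As soon as the appendage sizes grow to infinity one gets $Fol_{D_F}(k)\preceq k$, not $\approx e^{k^{2\alpha/(1-\alpha)}}$; no ``cut argument along the tree skeleton'' can rule these sets out because they really are cheap. The same sets break (ii): the walk can reach an appendage of size of order $\sqrt n$ lying at tree-distance only polylogarithmic in $n$ from the root, be confined there for time $n$ at cost $e^{-cn/(\sqrt n)^2}=O(1)$, and return, so $p^{D_F}_{2n}(d_0,d_0)$ is at least $e^{-C(\log n)^{c}}$ up to polynomial factors, far larger than $e^{-n^{\alpha}}$. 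Two further calibration problems: for $\alpha<1/3$ the binary tree alone already has $2^R\gg F(R)$ vertices up to depth $R$, and in any bounded-valency graph a genuine metric ball has at most exponential volume, so ``$|B_R|\approx F(R)=e^{R^{\beta}}$'' with $\beta>1$ cannot hold for balls. This is exactly why the paper works with (for $\alpha\ge 1/3$) the wreath product of $\Z$ with fibers of size $l(z)=F(|z|+1)/F(|z|)$ and (for $\alpha<1/3$) a generalized wreath product with lamps identified along a partition of $\Z$: there the huge volume comes from the product structure of configurations, no large subset can be severed by a small boundary, and proving that is the whole content of the satisfactory-points/hypergraph machinery (Lemmas \ref{neuds}, \ref{soushypergraphes}, \ref{choixsuccessifsuite} and their analogues in Section \ref{d}), which your sketch does not replace. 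Note also that the case $\alpha<1/3$ requires adding identifications to \emph{increase} the return probability above $e^{-n^{1/3}}$, an issue your plan does not address at all.

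The spectral step in (c) is also not a valid inequality as stated: $p^{D_F}_n(d_0,d_0)\ge c\,|B_R|^{-1}\exp(-Cn(1-\lambda_1(P_R)))$ presupposes a lower bound on the principal Dirichlet eigenfunction at $d_0$ (equivalently on the survival probability started from $d_0$), and in a graph whose longest paths sit inside appendages far from the root the eigenfunction localizes there, so $\phi_1(d_0)$ can be much smaller than $|B_R|^{-1/2}$ and the inequality fails with a universal constant. The paper's lower bound avoids eigenfunctions entirely: by reversibility and Cauchy--Schwarz, $p^{D_F}_{2n}(d_0,d_0)\ge \frac{\mu(d_0)}{\mu(A_r)}\,\P^{D_F}_{d_0}(H_n\in A_r)^2$ with $A_r$ the set of states whose base coordinate and lamp support lie in $[-r,r]$, and the wreath-product structure gives the crucial fact that confinement of the base projection (a lazy walk on $\Z$) in $[-r,r]$ already forces the full state into $A_r$, whence $\P_{d_0}(H_n\in A_r)\ge e^{-cn/r^2}$ while $\mu(A_r)\preceq F(r)$; optimizing $n/r^2+r^{2\alpha/(1-\alpha)}$ then gives $e^{-cn^{\alpha}}$. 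Your final optimization is the same, but without the wreath structure (or some substitute giving both the confinement bound and the volume bound for the same set) the two inputs are unavailable. The Coulhon upper bound via Proposition \ref{propfol} is fine in principle, but it is conditional on the F\o lner lower bound, which, as explained, fails for the graph you propose.
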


\subsection{Example of application of proposition \ref{existence}.}
With the help of these graphs and with  some  good wreath products, we
will be able to find upper bound of functional of type:

$\E( e^{-\lambda \sum F(L_{x,n}, x) }  )$ where
$L_{x,n}=\#\{ k\in[0;n];\ X_k=x\}$ on the graph $\C^g$ get
after a surcritical percolation on edges of $\Z^d$, where edges are kept or
removed with respect Bernouilli independant variables. The points of
$\C^g $ are the point of the infinite connected
component $\C$ which contains the origin,  we will give more details in  section
\ref{appli}. In particular,  we will prove the following property: 

 \begin{theo}\label{ext} Consider a simple random walk $X$ on the
 infinite cluster of $\Z^d $ that contains the origin 
$ \ Q \ a.s$  on the set $|\C|=+\infty$, and for large
enough   $n$ we have: 
\begin{eqnarray} \label{ext1}\forall \alpha \in [0,1] \ \ \  \E_0^{\omega} (
e^{
- \lambda \underset{z; L_{z;n}>0  } {\sum} L_{z;n}^{\alpha} 
} \ 
 1_{ 
 \{    X_{n}=0  \} 
 } 
) \approx  e^{-n^{\eta}},   \end{eqnarray} 
\begin{eqnarray} \label{ext2}     \forall \alpha > 1/2 \ \
\E_0^{\omega}( \underset{ z;L_{z;n} >0}{\prod }  L_{z;n}^{-\alpha}  \ \ 1_{\{X_n=0\}} ) 
 \approx  e^{-n^{\frac{d}{d+2}} 
 ln(n)^{\frac{2}{d+2} }},\end{eqnarray}
where  $\eta= \frac{ d+\alpha (2-d) }{ 2+d(1-\alpha) }.$ \\
The  constants  present  in the relation  $\approx$ 
 do not depend on the cluster $ \omega.$
\end{theo}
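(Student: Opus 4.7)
\noindent\textbf{Proof plan for Theorem \ref{ext}.} The strategy is to derive matching upper and lower bounds via two complementary routes: a wreath product construction combined with Proposition \ref{existence} for the upper bound, and a Donsker--Varadhan--style confinement strategy for the lower bound.

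\emph{Lower bound.} I would force the walk to remain in a box $B(0,R)\cap \C^g$ of optimally chosen radius $R$, using the event that the walk performs roughly equidistributed local times on the $\asymp R^d$ vertices of the box. Classical estimates for supercritical clusters (anchored isoperimetry, Barlow-type Gaussian heat kernel bounds) show that the probability to remain in such a box and return to the origin at time $n$ is of order $\exp(-cn/R^2)$, while on this event $L_{z,n}\asymp n/R^d$ for visited sites. Substituting into (\ref{ext1}) produces a total cost $\exp(-cn/R^2 - \lambda n^\alpha R^{d(1-\alpha)})$, optimized at $R\asymp n^{(1-\alpha)/(2+d(1-\alpha))}$, yielding exactly the exponent $\eta=(d(1-\alpha)+2\alpha)/(2+d(1-\alpha))$. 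For (\ref{ext2}) the entropic cost becomes $\exp(-cn/R^2-\alpha R^d \log(n/R^d))$, optimized at $R\asymp (n/\log n)^{1/(d+2)}$, giving $n^{d/(d+2)}(\log n)^{2/(d+2)}$.

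\emph{Upper bound.} The key identity is that for the wreath product $D_F\wr \C^g$, obtained by attaching a copy of the graph $D_F$ at each vertex of $\C^g$ and running a switch-walk-switch style walk, the return probability at the trivial configuration satisfies
\[ p^{D_F\wr \C^g}_{n}(\mathbf{0},\mathbf{0}) \approx \E_0^\omega\Big(\prod_{z:\, L_{z,n}>0} p^{D_F}_{2L_{z,n}}(d_0,d_0)\;1_{\{X_n=0\}}\Big). \]
By Proposition \ref{existence}(ii) each factor is of order $\exp(-L_{z,n}^\alpha)$, so the right hand side is comparable to the functional in (\ref{ext1}). To bound the left hand side from above, I would compute the F\o lner function of $D_F\wr \C^g$: it combines the polynomial growth of $\C^g$ with $Fol_{D_F}\approx \exp(x^{2\alpha/(1-\alpha)})$ through the standard wreath product estimate, and feeding the result into Proposition \ref{propfol} yields $\exp(-n^\eta)$. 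For (\ref{ext2}) I would replace $D_F$ by a graph with polynomial heat kernel decay of order $k^{-\alpha}$ so that the corresponding product becomes $\prod L_{z,n}^{-\alpha}$, and repeat the F\o lner--heat kernel calculation.

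\emph{Main obstacle.} The most delicate step is controlling the F\o lner profile of $D_F\wr \C^g$ \emph{uniformly in the random environment} $\omega$. On $\C^g$ one only has approximate $\Z^d$-isoperimetry, valid at scales larger than some $\omega$-dependent threshold through large deviation events. Transferring the deterministic graph-theoretic estimates of Proposition \ref{existence} into almost sure bounds with $\omega$-independent constants in the $\approx$ relation is the technically demanding part, and will likely require Barlow's heat kernel bounds for the cluster together with a renormalization argument comparing $\C^g$ to $\Z^d$ at the relevant mesoscopic scale $R$ identified in the lower bound.
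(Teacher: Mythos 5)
Your overall architecture is the one the paper actually uses: the identity you state for the wreath product return probability is Proposition \ref{fait0}, the F\o lner-to-heat-kernel step is exactly the route via the anchored isoperimetric profile of $\C_n^g\wr\wr D_F$ (the paper gets the cluster input from Proposition 1.4 of \cite{KL} and a rough isometry between $W$ and $W'$, rather than from Barlow bounds plus renormalization, but that is the same step), and the lower bounds are the same confinement computation. Two points, however, need attention. First, a minor one: on the confinement event you assert $L_{z,n}\asymp n/R^d$ for visited sites, which is not true site by site and is not needed; the correct (and sufficient) statement is the Jensen/concavity bound $\sum_z L_{z,n}^{\alpha}\le N_n^{1-\alpha}n^{\alpha}$ (resp.\ $\sum_z \ln L_{z,n}\le N_n\ln(n/N_n)$), which is how the paper gets the cost $\exp(-\lambda n^{\alpha}R^{d(1-\alpha)})$ without any equidistribution claim.

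The genuine gap is in your upper bound for (\ref{ext2}). Whatever fiber you choose with $p_m\approx c\,m^{-\alpha}$ (the paper simply takes $\L^1=\Z$, whose decay $m^{-1/2}$ suffices since $\alpha>1/2$), the comparison between $\prod_{z}p_{L_{z,n}}(0,0)$ and $\prod_z L_{z,n}^{-\alpha}$ costs a multiplicative constant \emph{per visited site}: passing from the wreath-product bound to the desired functional introduces an uncontrolled factor of order $c^{-N_{2n}}$, and $N_{2n}$ can be of order $n$, which swamps $e^{-n^{d/(d+2)}\ln(n)^{2/(d+2)}}$. (For (\ref{ext1}) this problem is invisible because each visited site contributes $L^{\alpha}\ge 1$ to the exponent, so per-site constants are absorbed by enlarging $\lambda$ and one recovers all $\lambda>0$ by Jensen; for the product functional, sites with $L_{z,n}=1$ contribute factor $1$ and nothing can be absorbed.) The paper resolves this by splitting according to the number of visited points $N_{2n}$ and the number $N_{2n,2}$ of points visited at least twice: if $N_{2n}$ is small the constant is affordable, if $N_{2n,2}$ is large the product itself is at most $2^{-\alpha N_{2n,2}}$, and the remaining event (many fresh points but few revisits) has probability $e^{-cN}$ by Lemma \ref{lemmeana}, proved via a supermartingale-type estimate on the indicators of an immediate back-and-forth after each fresh visit. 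Some device of this kind is indispensable, and your sketch does not contain it; likewise you would still need the reversibility/Cauchy--Schwarz step (Lemmas \ref{bidule} and \ref{bidule'}) if you want the two-sided statement without, or with, the indicator $1_{\{X_n=0\}}$ in both directions.
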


\begin{rem} If we take $\alpha=0$ in equation
(\ref{ext1}), we retrieve the Laplace transform of the
number of visited points  $N_n$ (see \cite{KL}),
$$\E^{\omega}_0(e^{-\lambda N_n})\approx
e^{-n^{d/d+2} } .$$
\end{rem}

In the whole article, $C, c$ are constants which value can evolve
from lines to lines. 
\section{Proof of proposition  \ref{existence} }
In this section, we first  recall the definition of the
wreath product of two graphs and we explain  our
strategy  aimed at the  construction of  our expected graphs. This leads
naturally towards two cases  corresponding  to the two
last subsections.

\subsection{Wreath products and explanation  of our
method }
Let $A$ a graph  and $(B_z)_{z\in A}$   a family of  graphs.
\begin{defi} The wreath product of $A$ and
$(B_z)_{z\in A}$ is the
graph noted by $A\wr B_{z} $ such that:\\
$$V(A\wr B_z)= \{(a,f);\ a\in A \text{ and }
f:A\rightarrow \cup_z B_z \text{ with } supp(f)<\infty $$
$\hspace*{5.5cm} \text{ and } \ \ \forall z\in A, \ f(z)\in B_z \}$\\
and 
$\ \ \ \ \ \ \ E(A\wr B) = \{ \Big( (a,f)(b,g) \Big);\ 
(f=g \ \text{ and }\  (a,b)\in E(A)  ) $\\
$\hspace*{7cm}\text{or}$ \\
$\hspace*{3.5cm}(a=b \text{ and } \forall x\neq a \ f(x)=g(x) \text{ and }
\big(f(a),g(a)\big) \in E(B_a)  )\}$

\end{defi} 
This graph can be interpreted as follow:  imagine there is a lamp in each point $a$ of $A$ such that
each point of $B_a$  defined a different intensity  of the
lamp. The different intensity of
each lamp can be represented  by a configuration $f: A\rightarrow
\cup_a B_a$ which 
encodes the intensity of the lamp at point $a$ by the value
$f(a)$. A point in the wreath product is the  couple
formed by the position of a  walker in graph $A$ and the
state of each lamp.
A particular case is when  the graph
$B_a$ (called the fiber) is the same for all $a \in
A$.

 Let us now explain the way we  construct
 graph $D_F$ of proposition  \ref{existence}.
 Consider the wreath product of the Cayley  graph of
 $(\Z,+)$  by the Cayley graph of $\frac{\Z}{2\Z}$ 
 with $\bar{1}$ as generator. By the Theorem 1 in
 \cite{ersh}  (or Proposition 3.2.1 in
 \cite{rothese}) we immediatly deduce that the
 Folner fonction of this wreath product is like   $e^n$.
So this graph answers to proposition \ref{existence} in the case  $\frac{2\alpha}{1-\alpha}=1
$.  ie : $ \alpha =1/3$.\\
In the case  $\alpha \neq 1/3$,  it would be rather natural to 
think that we can get the expected  graph, 
by considering  the wreath product of $\Z$
by  fibers with variable sizes.

$\bullet$ If $\alpha \geq 1/3$,  the return 
probability  in the graph  $D_F $  should be
  in $e^{-n^{\alpha} } $ so less than in the
  graph  $\Z\wr \frac{\Z}{2\Z}$ ( in 
  $e^{-n^{1/3}}$) . Thus to force the walk
  to come back rarely at the origin, an
  idea is to make the size of the fibers  grow  when   we
  move away the origin in order to force
   the walk to loose  time  in the fiber. \\
 Note  that for  $\alpha \geq 1$  condition
 (ii) is always satisfied  (in a graph with
 bounded geometry). \\

$\bullet$ If $\alpha \leq 1/3$,  the return 
probability  in the graph  $D_F $  should be
  larger than  in   $e^{-n^{1/3}}$. The idea
  is to add some links (some edges by
  example) to force  the walk to come back
  often to the origin.  Suppose all lamps
  are identified then we get a decay in
  $n^{-1/2}$ and if all lamps are
  independent we get a decay in
  $e^{-n^{1/3}}$, so it remains to find an
  identification  of lamps which implies an intermediate decay.
 We are going to construct a wreath product
 where the walker (at a certain point)  is allowed to   change the value
 of the configuration at differents  points.
  Such graphs are sometimes called
  generalized wreath products. \\
  
  To prove  isoperimetric inequality on wreath
  product ( point (i)
  of the proposition \ref{existence})  we use idea of Erschler and
  the concept of  "satisfactory" points.   We  begin to introduce
  this notion in section \ref{f}. At the beginning of section
  \ref{d}, we explain    why  an improvement is needed in the definition
  of "satisfactory"
   points. The improvement takes place through the introduction of  a new
   and more theoretical way of defining the  notion of "satisfactory"
   points 
    than in
  section \ref{f}. For simplicity, we use the same words  for
  this concept in the two sections but notions which appear in 
  sections
  \ref{d} and \ref{f}  are independent.

 \subsection{case $\frac{1}{3} \leq \alpha <
 1$}\label{f} $\ $ \\
 
 \subsubsection{Construction of the graph and
 preliminary notions and lemmas}

Let $A'=(\Z, E(\Z))$  where  $E(\Z)=\{(x,y);
\ |x-y|=1\}$ and
 $(B'_z)_{z\in \Z}$  be  the  Cayley graph of
 the   groups $(\frac{ \Z}{l(z) \Z}, +)$
 with   $\{\bar{1}\}$ as  generators where   $l(z)=
 |V(B_z')|=\frac{F(|z|+1)}{F(|z|) },$ ($F$ is defined at proposition
 \ref{existence}).\\
Notice that since  $\alpha \in [1/3, 1],$ the
fonction  $ \ z \mapsto l(z)$ is increasing
on   $\R_+$.\\
Finally put $$D_F = A' \wr B'_z.$$  Let us
prove that this graph answers to propostion
\ref{existence}.
\\

We begin by proving  (i). \\
The proof is  similar to the Theorem 1 in
\cite{ersh} or proposition 3.2.1 in
\cite{rothese}.\\
\\
Let  $\psi(n)=Fol_{A'}( n) = \underset{
\underset{ \frac{|\partial_{A'} U|}{ |U|} \leq 1/n}{U\subset \Z} }{min} |U|
=2n $.\\
 
 Take  $U\subset V(D_F)=V(A'\wr B'_z)$ such
 that  $\frac{|\partial_{D_F} U|}{|U|}\leq
 1/n$ for some $n$. We want to find a lower bound on
 $|U|$.\\
 
$\bullet$ For each set $U$, we attach an hypergraph  $K_U= \Bigl( V(K_U), \xi( K_U ) \Bigr)$
such that:\\
- the  vertices of  $K_U $ are the
configurations $f$ which belong to  the set $\{f ; \exists a \in \Z \  (a,f)\in U \}$, \\
- let us now define the edges of   $K_U$ : 
for all  $f\in V(K_U)$ and  $a\in \Z$, we
link $f$ to all configurations g satisfying: 
$$   \left\lbrace
\begin{array}{l}
(a,g)  \in U \\
and \\
\forall x\neq a  \ f(x)=g(x),
\end{array}
\right.
$$ by a  multidimensional edge  $l$ of
dimension $d$ where  $$d=\underset{a}{dim} f := \#\{ g; (a,g) \in U \ and
 \ \forall x \neq a \ f(x)=g(x)  \}.$$
 We say that the edge $l$ is
 associated to point $a$. \\
\\
$\bullet$ To each   hypergraph $K_U $ we associate a 
graph called the  " one dimensional skeleton",  noted by  $\Gamma(
K_U)=\Gamma_U=(V(\Gamma_U), E(\Gamma_U) )$ and
 defined by: \\
 - $V(\Gamma_U)=V(K_U),$\\
 - two configurations $ f_1$ and  $f_2$ are
 linked by an edge if they belong to a same
 multidimensional edge in  $K_U$.  \\

Let  $w$ be  the weight defined by $w(e)= 1/d $
 for  $e$ belonging to $ E( \Gamma_U)$ and coming 
 from a  multidimensional edge  in $K_U$ of
 dimension $d$. Notice that  this choice of weights
 gives :
 \begin{eqnarray} \label{remcardinalpoids}   |U| \geq 2
 \sum_{e\in E(\Gamma_U)} w(e),
 \end{eqnarray}
 and if we assume  moreover that for all $(x,f)\in U,\ dim_x f \geq 1$ ($U $ has
 no separeted  points) then the equality holds in
 \ref{remcardinalpoids}
  Let   $p$   be the   projection $\Z\wr B'_z
\rightarrow \Z $.   Let us now introduce some
notations.  Denote   
$\lambda=(\lambda_a)_{a\in p(U)} \in \R^{p(U)}$ and  $b \geq 0$. \\
\\ 
 $\bullet$ For $f\in V(K_U)$, we say that  
 $f$ is 
$(\lambda,b)-satisfactory $  
if :
  $$\#\{a \in p(V) ; \underset{a}{dim} f \geq \lambda_a      \} \geq b.
  $$

ie : $f$ is $(\lambda,b)-satisfactory$  if
there exists at least  $b$ multidimensional
edges  attached to  $f$  in $K_U$ of
dimension  at least $\lambda_a$ at point $a$.
We  denote by $S_U(\lambda,b)$  the set of these
points. Most of the time,  in order to simplify notations
we will drop the subscript $U$ when there is no
ambiguity.  \\
$\bullet$ Otherwise we  tell that   $f$ is   
$(\lambda,b)-nonsatisfactory $ and we denote by  
$NS(\lambda,b) $ the set of nonsatisfactory points.\\
$\bullet$ An edge of  $\Gamma_U$ is  
$(\lambda,b)-satisfactory $ if 
it links two 
$(\lambda,b)-satisfactory  $ configurations
otherwise it is said
$(\lambda,b)-nonsatisfactory $ .  We denote   $S^e(\lambda,b)$ 
[resp $NS^e(\lambda,b)$] the set of 
$(\lambda,b)-satisfactory $ 
[resp $(\lambda,b)-nonsatisfactory $] edges. \\
$\bullet$ A  point $u =(x,f)\in U$ is
$(\lambda,b)-satisfactory $ [resp 
$(\lambda,b)-nonsatisfactory$] 
if  $f \in S(\lambda,b)$ 
[resp $ NS(\lambda,b) $]. We denote
by $S^p(\lambda,b)$ and $NS^p(\lambda,b)$ 
the set of points which are  (or are not ) 
$(\lambda, b)-satisfactory$. \\ 
$\bullet$ A point $u=(a,f)\in U$ is said  
$b-good$ if 
 $ \underset{a}{dim} f \geq b   $
 otherwise it is   $b-bad.$
\\
\\
Let us now explain the main steps of the
proof.  We take $U \subset V(D_F)$ such that $\frac{|\partial_{D_F} U| }{|U| }
\leq \frac{1}{n}$. We begin to prove that there
exists some value of $b$ and some sequence $\lambda$ 
such that there are few points 
 $(\lambda,b)-nonsatisfactory $. Then, we
 extract a subgraph of $\Gamma_U$ where all
 points are   
 $(\frac{\lambda}{30},\frac{b}{30})-satisfactory
 $ and this allows us to obtain a lower bound of  $|U|$. We begin by the
 following lemma.\\
\\
\begin{lem}
\label{neuds} Let  $U \in V(A'\wr B'_z) $
such that $\frac{|\partial_{D_F} U| } {| U| } \leq
\frac{1}{1000n}$ then  
  \begin{enumerate}[$(i)$]

\item  $  \frac{\# \{  u=(x,f) \in U ;\  u \  is\ \lambda_x(n)- bad  \} }{\#U}
\leq \frac{1}{1000n} $
\item  $  \frac{\# \{  u=(x,f) \in U ;\  u \in
NS^p(  \lambda(n)/3,\psi(n)/3 ) \  \} }{\#U}    \leq \frac{1}{500} ,$
\end{enumerate}
where $\lambda=(\lambda_x)_x $ with $\lambda_x(n)=Fol_{B_x'}(n)$ and
$\psi(n)=Fol_{A'}(n)$.
\end{lem}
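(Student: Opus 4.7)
The plan is to prove both parts by slicing $U$ along the two coordinate types of the wreath product $D_F = A' \wr B'_z$, then comparing Folner ratios of the slices against the global hypothesis $|\partial_{D_F} U| \leq |U|/(1000n)$. Every boundary edge of $U$ in $D_F$ is either a \emph{walker-type} edge (walker moves in $A'$, configuration fixed) or a \emph{lamp-type} edge at some $x$ (walker fixed at $x$, configuration changes only at $x$), and these two families are disjoint; each slicing argument consumes only one of the two families.

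For (i), I would slice along the lamp coordinate. For each $x \in p(U)$ and each configuration $\bar f$ on $\Z \setminus \{x\}$, set
\[
S_{x,\bar f} \ = \ \{ v \in B'_x \ : \ (x, \bar f_v) \in U \},
\]
where $\bar f_v$ is the extension of $\bar f$ sending $x$ to $v$. Two points of $U$ lying over the same slice share the value of $\mathrm{dim}_x$, which equals $|S_{x,\bar f}|$; a slice is therefore entirely $\lambda_x(n)$-bad iff $|S_{x,\bar f}| < \lambda_x(n) = Fol_{B'_x}(n)$. By definition of the Folner function, such a slice satisfies $|\partial_{B'_x} S_{x,\bar f}|/|S_{x,\bar f}| > 1/n$, and its boundary edges inject into the lamp-type boundary edges of $U$ in $D_F$. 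Summing over bad slices and comparing against the hypothesis yields (i).

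For (ii), I would slice along the walker coordinate: for each $f \in V(K_U)$, set
\[
T_f \ = \ \{ x \in \Z \ : \ (x, f) \in U \},
\]
and split $T_f = G_f \sqcup B_f$ with $G_f = \{ x \in T_f : \mathrm{dim}_x f \geq \lambda_x(n)/3 \}$, so that non-satisfactoriness of $f$ reads $|G_f| < \psi(n)/3$. Non-satisfactory configurations I split into two cases. Case A, $|T_f| < \psi(n)$: the walker slice $T_f$ fails the Folner condition of $A'$ at scale $n$, so $|\partial_{A'} T_f| > |T_f|/n$, and the corresponding walker-type boundary edges sum to bound $\sum_{A} |T_f|$ by a constant multiple of $|U|$. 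Case B, $|T_f| \geq \psi(n)$: then $|G_f| < |T_f|/3$ forces $|B_f| \geq 2|T_f|/3$, hence $\sum_{B} |T_f| \leq \tfrac{3}{2} \sum_f |B_f|$; but $\sum_f |B_f|$ counts points of $U$ whose lamp slice at $x$ has size less than $\lambda_x(n)/3 \leq Fol_{B'_x}(n/3)$, which the argument of (i) at this sharpened threshold (Folner ratio $>3/n$) bounds. Adding the two cases delivers the $1/500$ bound.

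The main obstacle I expect is the clean bookkeeping: one must verify that the lamp-type edges used in (i) and in Case B and the walker-type edges used in Case A really are disjoint sub-collections of $\partial_{D_F} U$, so that each partial sum can be independently absorbed into $|\partial_{D_F} U| \leq |U|/(1000n)$ rather than competing for the same budget. A subsidiary but essential point is the threshold check $\lambda_x(n)/3 \leq Fol_{B'_x}(n/3)$, which must be argued from $\lambda_x(n) = \min(2n, l(x))$ and the cycle structure of $B'_x = \Z/l(x)\Z$, splitting further according to whether $l(x)$ dominates $2n$ or not.
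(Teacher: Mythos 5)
Your argument is correct and is, in substance, the paper's own argument: part (i) is identical (your lamp slices $S_{x,\bar f}$ are the paper's sets $\tilde P_{x,f}$, whose $B_x'$-boundaries are injected into the configuration-type boundary of $U$), and part (ii) rests on the same decomposition of $\partial_{D_F}U$ into walker-type and lamp-type edges with a two-case analysis of the nonsatisfactory configurations. The one real deviation is how the second case is closed: the paper splits according to whether $|\partial_{A'}p(Neud(f))|/|p(Neud(f))|$ exceeds $1/n$ (smallness of this ratio then forces $|p(Neud(f))|\ge \psi(n)$, which is exactly your Case B hypothesis), and it charges at least one configuration-boundary edge to every pair $(x,f)$ with $\underset{x}{\dim}f<\lambda_x/3$; you instead rerun the slicing of (i) at the threshold $\lambda_x(n)/3$ with the sharpened ratio $3/n$. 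Your subsidiary inequality $Fol_{B_x'}(n)/3\le Fol_{B_x'}(n/3)$ does hold, since $Fol_{\Z/l\Z}(k)$ is of the form $\min(ck,l)$ and each relative position of $l(x)$ is immediate; it is what lets you land at $\frac{1}{1000}+\frac{1}{2000}<\frac{1}{500}$, and your slice-by-slice count has the incidental merit of counting each slice once, whereas the paper's sum over pairs $(x,f)$, $f\in F_2$, counts a slice once per configuration lying over it (a point its write-up glosses over, harmlessly for the constants). Two clarifications: the ``budget competition'' you list as the main obstacle does not arise, because each of your estimates injects its own family into $\partial_{D_F}U$ and is compared with the full hypothesis separately, exactly as the paper simply adds its two inequalities (disjointness would only improve constants); and, just like the paper's own proof, the slicing argument for (i) yields the bound $\frac{1}{1000}$ rather than the stated $\frac{1}{1000n}$ --- that discrepancy sits in the statement, not in your argument, and only the $\frac{1}{1000}$-type bound is used downstream.
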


\begin{proof} $\ $ \\
For  (i) we notice that we can associate to certain bad points,
some point of the boundary of $U $.
Indeed, for  $(x,f)$  a  point,  we  call: \\ 
 $ \tilde{P}_{x,f} =\{ g(x) ; \ (x,g) \in U \ \ and \ \ \forall
 y \neq x \  g(y)=f(y)\}$ and  \\
  $ P_{x,f} =\{ (x,g); \ g(x) \in \tilde{P}_{x,f}  \}$ . Note
  that   $|\tilde{P}_{x,f}|=|P_{x,f}|.$\\
 $F_0$ stands for a set of  configurations such that: 
$$  \underset{x\in A',f\in F_0}{\dot{\bigcup } }  P_{x,f} = \{
u=(x,g) \in U  ; \  u \ is \ Fol_{B_x'}(n)-bad \}. $$ 
Take note that,  for a point $u=(x,f) $ which is $ 
Fol_{B_x'}(n)-bad, $ by the 
definition of a Folner function, we have:

$$   |\tilde{P}_{x,f}| <   Fol_{B_x'}(n).$$
So,
$$  | \partial_{B_x} \tilde{P}_{x,f} |  \geq \frac{1}{n}|   \tilde{P}_{x,f} |  $$

Now the application  $  \dot{\underset{x\in A',f\in F_0}{\bigcup}}
\partial_{B_x} \tilde{P}_{x,f}  
\longrightarrow
\partial_{D_F} U$  is injective,\\
$\hspace*{4.8cm} (g_1,g_2) \mapsto  \Bigl( (x,f_{x,g_1} ),(x,f_{x,g_2})\Bigr)$
\\
\begin{eqnarray*}
\text{where } \  (g_1,g_2)\in \partial_{B_x} \tilde{P}_{x,f} \
\text{and } \  
f_{a,h } : &v& \rightarrow f(v)  \ \text{ for}\  v \not=  a.\\
&a&\rightarrow h
\end{eqnarray*}

Hence, we have : \\
\begin{eqnarray*}
\frac{|U|}{1000n}\geq |\partial_{D_F} U| 
&\geq &
\underset{x\in A,f\in F_0}{\sum} | \partial_B \tilde{P}_{x,f}| 
\\
&\geq &\frac{1}{n}
 \underset{x\in A ,f\in F_0}{\sum} | \tilde{P}_{x,f}| \\
 &=&\frac{1}{n} \#\{ u=(a,f)\in U ;\  u \ is \ Fol_{B_a'}(n)-bad \}.
 \end{eqnarray*}
\\
For (ii), the proof splits  into three parts. \\
\begin{enumerate}[A.]
\item  Let,  
\begin{eqnarray*}Neud &=&\{  u \in U ;\  u \in
NS^p(\frac{\lambda } {3},\frac{Fol_{A'}(n)}{3} ) \  \} \\
&=&\{ u =(x,f) \in U ; f \in \ NS(\frac{ \lambda}{3},\frac{Fol_{A'}(n)}{3} )\},
\end{eqnarray*}
and  let:  
$$Neud(f) = \{ (x,f) ; (x,f)\in U  \}.$$  Notice that $p(Neud(f))=\{x; (x,f)\in U\} .$ \\
  For  $F$ a set of configurations, we call   
   $$Neud(F)= \underset{f\in F}{\cup} Neud(f). $$ 
   Note well that it is a
   disjointed union. \\
   \\
\item  Now take  $f \in NS(\frac{\lambda } {3},\frac{Fol_{A'}(n)}{3})$,  and
look at the set $p(Neud(f))$. There are only two
possibilties: \\
   -either, it gives a large part of boundary in
   'base', \\
   -either, it  gives a few part of boundary in
   'base'. If  this is the  case, taking into account
    that $f$ is not
   satisfactory,  we retrieve boundary in  'configuration'. \\
    Anyway, we get some  boundary of $U$, but  our assumptions restrict this
    contribution.\\
      \\
So we  differentiate two cases: \\

\underline{First case} : $f\in F_1 := \{  f\in
NS(\frac{\lambda}{3},\frac{Fol_{A'}(k)}{3}) ;   
\frac{\#\partial_{A'} p(Neud(f)) }{\# p(Neud(f))} > \frac{1}{n} \} .$\\

The application$ \underset{f\in F_1}{\dot{\bigcup}}\partial_{A'} p(Neud(f))
\longrightarrow  \partial_{D_F} U  \ $ is injective. \\
\hspace*{4.5cm} $       (x,y) \longmapsto \Bigl(\left(x,f\right);\left(y,f\right)  \Bigr)$ \\

So, we get:
    \begin{equation}\label{cas1main}
 |\partial_{D_F} U  | \geq  \underset{f\in F_1}{\sum} | \partial_{A'}
p(Neud( f) )| \geq  \frac{1}{n} \underset{f\in F_1}{\sum} |
p(Neud(f))|
\geq  \frac{1}{n}|Neud (F_1)|  . 
\end{equation}
\\

\underline{Second case} : $f\in F_2 := \{  f\in NS(\frac{\lambda }
{3},\frac{Fol_{A'}(n)}{3}) ;  
 \frac{\#\partial_{A'} p(Neud(f)) }{\# p(Neud(f))} \leq \frac{1}{n} \} .$\\
\\
Since $f\in NS(\frac{\lambda }
{3},\frac{Fol_{A'}(n)}{3} ) $ it follows that :
$$ \# \{ x\in p(Neud(f)) ; \ \underset{x}{dim} f \geq \frac{\lambda_x}{3}  \} < 
\frac{1}{3}Fol_{A'}(k) .$$ 
Hence,  $$ \# \{ x\in p(Neud(f)) ; \ \underset{x}{dim} f <  
\frac{\lambda_x}{3}
\} \geq |Neud(f)| - \frac{1}{3}Fol_{A'}(n) $$
(We use that  $|p(Neud(f)|=|Neud(f)|.$)\\
Since  $ f \in F_2$  and by   definition of a Folner   fonction: 
$$|Neud(f)| \geq Fol_{A'}(n).$$ 
As a result, we have:   $$ \# \{ x\in p(Neud(f) ; \ \underset{x}{dim} f  <
\frac{\lambda_x}{3}  \} \geq \frac{2}{3} |Neud(f)| .$$
   \begin{eqnarray} \label{P_f} ie :   |P_f| \geq \frac{2}{3} |Neud(f)|,
   \end{eqnarray}
with  $P_f=\{ x\in p(Neud(f) ; \ \underset{x}{dim} f  <       
\frac{\lambda_x}{3}   \}. $ \\
Let $\tilde{P}_{x,f} =\{ g(x) ; \ (x,g) \in U \ and \ \forall y \neq x \ 
g(y)=f(y)\}$. To each point of  $\partial_{B_x'} \tilde{P}_{x,f} $  we
can associate, by the same way  as before, a  point
of $\partial_{D_F} U$. So, we have:
$$|\partial_{D_F} U| \geq  \underset{x\in P_f, f\in F_2}{\sum}
|\partial_{B_x'} \tilde{P}_{x,f}|  .$$
Now for $x$ in $P_f, \  \underset{x}{dim} f  = |   \tilde{P}_{x,f} | <
\lambda_x = \frac{1}{3}Fol_{B_x'}(n)< Fol_{B_x'}(n).$
So $$   |\partial_{B_x'}   \tilde{P}_{x,f} | >\frac{1}{n}  |  
\tilde{P}_{x,f} |, $$
ie: $$   |\partial_{B_x'}   \tilde{P}_{x,f} | \geq 1.    $$ 
Then, 
 \begin{eqnarray*}
 \underset{x\in P_f, f\in F_2}{\sum} |\partial_B \tilde{P}_{x,f}| 
&\geq & \underset{f\in F_2}{\sum} \frac{2}{3} |Neud(f)|     \hspace*{1cm}
\text{by (\ref{P_f}}), \\
&\geq &  \frac{2}{3} |Neud(F_2)|
\end{eqnarray*}
We have thus 
\begin{eqnarray*}
|\partial_{D_F} U| 
\geq   \frac{1}{n} |Neud(F_2)|   \text{ for $n\geq 2$. }
\end{eqnarray*}
\item  Adding  (\ref{cas1main}) and this last
equation  and using the inequality   
$\frac{|\partial_{D_F} U| } {| U| } <
\frac{1}{1000n},$  we obtain  : \\
$$ \frac{|Neud|}{|U|} < \frac{1}{500}.$$
\end{enumerate}
\end{proof}

\begin{lem}
\label{soushypergraphes} Let  $ (\Gamma_U,
w)$   be the one dimensional skeleton with
weights $w$, constructed from  $K_U$. Let
$\eta=(\eta_a)_{a\in p(U)}.$\\
Assume that $E(\Gamma_U ) \neq \emptyset$
and   $\forall (a,f) \in U  \ 
\underset{a}{dim} f \geq  {\eta}_a >0$. 
If  the following condition is satisfied  :\\
$$ \frac{ \underset{e\in  NS_U^e(\eta ,  b ) }{\sum } w(e) }
{\underset{e\in E(\Gamma_U )}{\sum }
w(e)}<1/2,
$$\\
then there exists a  not empty subgraph  
$\Gamma'=\Bigl( V(\Gamma'),E(\Gamma') \Bigr)$
of $\Gamma_U $ such that all  edges  are   $S_U^e(\eta/10, b/10 )$.
\end{lem}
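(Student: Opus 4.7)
\noi\textbf{Proof plan.} The plan is to take $\Gamma'$ to be the subgraph of $\Gamma_U$ induced by the set of $(\eta/10,b/10)$-satisfactory vertices $S_U(\eta/10,b/10)$. The central observation is a simple monotonicity property: if $\lambda'\leq \lambda$ componentwise and $b'\leq b$, then $S_U(\lambda,b)\subset S_U(\lambda',b')$, since the set $\{a\in p(U):\ \underset{a}{\mathrm{dim}}\, f\geq \lambda_a\}$ can only grow when $\lambda$ shrinks, so its cardinality only grows. Applied with $\lambda'=\eta/10$ and $b'=b/10$, this gives $S_U(\eta,b)\subset S_U(\eta/10,b/10)$; consequently every $(\eta,b)$-satisfactory edge of $\Gamma_U$ already lies in $\Gamma'$, and by construction every edge of $\Gamma'$ connects two $(\eta/10,b/10)$-satisfactory vertices, i.e.\ belongs to $S_U^e(\eta/10,b/10)$.

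\noi To check that $\Gamma'$ is non-empty, I would invoke the weight hypothesis directly. Since $E(\Gamma_U)\neq\emptyset$ and each weight $w(e)=1/d$ is strictly positive (the dimensions $d$ are finite integers bounded below by $\eta_a>0$), the total weight $\sum_{e\in E(\Gamma_U)}w(e)$ is strictly positive. The assumption then yields
\[
\sum_{e\in S_U^e(\eta,b)}w(e)\;>\;\tfrac{1}{2}\sum_{e\in E(\Gamma_U)}w(e)\;>\;0,
\]
so at least one $(\eta,b)$-satisfactory edge exists. Both its endpoints lie in $S_U(\eta,b)\subset S_U(\eta/10,b/10)$, hence this edge survives in $\Gamma'$, which is therefore not empty.

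\noi I do not expect a serious obstacle for this version of the statement: the constant $1/10$ is in fact loose, and the same argument delivers the conclusion with the parameters $(\eta,b)$ themselves. The natural reason to leave the slack is to accommodate a stronger downstream use, in which one wants every remaining vertex of $\Gamma'$ to be satisfactory with respect to the restricted dimensions $\underset{a}{\mathrm{dim}}^{\Gamma'}\, f$ rather than $\underset{a}{\mathrm{dim}}^{U}\, f$. That variant would demand an iterative pruning of vertices that become locally non-satisfactory as others are removed, together with a weight bookkeeping showing that the cascade of losses cannot exhaust the satisfactory edge weight guaranteed by the hypothesis. It is in that finer argument that the assumption $\underset{a}{\mathrm{dim}}\, f\geq \eta_a$ at every $(a,f)\in U$—idle above—would enter, to control the weight incident to each pruned vertex, and the factor $1/10$ would arise naturally as the slack absorbed at each step.
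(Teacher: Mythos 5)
Your argument is correct only for the literal reading of the statement, and that reading is not the one the paper proves or uses. Taking $\Gamma'$ to be the subgraph induced by $S_U(\eta/10,b/10)$, the monotonicity $S_U(\eta,b)\subset S_U(\eta/10,b/10)$ and the positivity of the $(\eta,b)$-satisfactory edge weight do give a non-empty subgraph all of whose edges are $S_U^e(\eta/10,b/10)$; and you are right that in this version the factor $1/10$ and the hypothesis $\underset{a}{\dim} f\geq\eta_a$ are idle. But the lemma is invoked later (proof of (i), where Lemma \ref{choixsuccessifsuite} is applied ``to the set of configurations relatively to $K'$'') in a way that requires each configuration of the subgraph to admit, \emph{inside the subgraph}, at least $b/10$ base points at which its value can be changed in at least $\eta_a/10$ ways; that is, satisfactoriness must hold with the dimensions recomputed in $\Gamma'$, exactly as is made explicit in the analogous Lemma \ref{topchiantsoushyper} (``satisfactory in respect to $\Gamma'$''). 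Your induced subgraph does not have this property: a vertex of $\Gamma'$ may owe its dimension at a point $a$ entirely to configurations $(a,g)$ with $g\notin S_U(\eta/10,b/10)$, which are absent from $\Gamma'$, so within $\Gamma'$ it may admit no changes at all and the counting lemma cannot be applied. So the conclusion you establish is too weak to serve its purpose, and the real content of the lemma is missing.

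That missing content is precisely the iterative pruning you describe but defer: set $U_0=U$, $U_{i+1}=U_i-NS^p_{U_i}(\eta/10,b/10)$, and show the process halts before the graph is empty. The quantitative heart is Sublemma \ref{soustrucchiantsmain}: if a point $L$ was $(\eta,b)$-satisfactory at step $0$ but is removed at step $k+1$, then $\underset{e\in\underset{\downarrow}{L}}{\sum}w(e)\leq\frac{1}{2}\underset{e\in\underset{\uparrow}{L}}{\sum}w(e)$, because the hypothesis $\underset{a}{\dim} f\geq\eta_a$ forces $w(e)\leq 1/\eta_a$, so each surviving multidimensional edge of dimension $<\eta_a/10$ contributes at most $1/10$, while fewer than $b/10<N_0/10$ edges of large dimension remain; this yields the $19/100$ versus $81/100$ split. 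Orienting removed edges and summing the resulting geometric series gives $C_2\leq C_1$, hence $C_0\leq 2C_1<\sum_{e\in E(\Gamma_U)}w(e)$, so some vertex survives every step and the surviving set is satisfactory relative to itself. This weight bookkeeping is the entire proof in the paper; your proposal, by settling for the weak reading and only sketching the cascade, omits it. Since you diagnosed correctly where the work lies, the fix is to carry out that argument rather than rely on the induced-subgraph shortcut.
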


\begin{proof}
In the gaph $\Bigl(  V(\Gamma_U), E(\Gamma_U) \Bigr)$, 
we remove all points    $NS^p_U(\eta/10,b/10)$
and the adjacent edges. After this step, it may appear new points which
are $NS^P_{U_1}(\eta/10,b/10)$, 
where $U_1=U- NS^p_U(\eta/10,b/10)  $. \\
We remove once again these points and adjacent edges
and we  reiterate this
 process.\\
Let $U_i$ be  the set of points still present at step  $i$. 
$$\left \lbrace
\begin{array}{l}
U_0=U,\\
\mathrm{for} \ i\geq 1 \ \ U_{i+1}=U_i -NS_{U_i}^p(\eta/10,b/10).
\end{array}
\right.$$
It is sufficient to prove that this process stops before the graph
becomes empty.\\
Let $C_1= \underset{e\in NS_U(\eta,b)}{\sum }w(e) $ ,
$\ \ \ C_2=   \underset{ 
\underset{ \text{ at the end of the process} }
{e\in S_U^e(\eta,b) ; e \text{ removed  } }  
}{\sum } w(e) ,$ \\
 et 
  $$ C_0= \underset{
\underset{\text{ at the end of the process}  }{e\in E(\Gamma_U) ) ; e
\text{ removed } }
} 
 {\sum }
 w(e)    . $$
If we show that  $ C_2 \leq C_1$,  the propostion is proved, since : $$C_0\leq C_1 +C_2 \leq 2 C_1<
  \underset{e\in E(\Gamma_U ) }{\sum} w(e).$$ \\
Indeed, this means that it remains point(s)  not removed. ie: $\exists k_0\in \N  $ 
such that all vertices of the graph we get at step  $k_0$, are 
$S^p_{U_{k_0}}(\eta/10,b/10) $, donc 
 $ S^p_U(\eta/10,b/10).$    \\
 \\
In order to see this, let us introduce an orientation on edges removed: if $L$ 
and $Q$ are  points of the graph,  we orient the edge from  $L$  to  $Q$
if  $L$  is removed before $Q$, and we choose an arbitrary orientation if
they are removed together.   We denote   by 
$\underset{\downarrow}{L}$ the set of edges
leaving the  point $L$  and 
$\underset{\uparrow}{L}$  the set of edges ending at
point $L$, both at step $0$.

\begin{souslem}
\label{soustrucchiantsmain} 
Let $k \in \N$  and  let   $L $ stands for   a point  of the   graph 
$\Gamma_U$ (satisfying   assumptions of lemma \ref{soushypergraphes}), 
removed after  $k+1$ steps.
Suppose that   $L$  is  initially   $ S^p_{U}(\eta,b)$,  then 
$$ \underset{e \in \underset{\downarrow}{L} }{\sum} w(e)   \leq \frac{1}{2} \underset{e \in  
\underset{\uparrow}{L}}{\sum} w(e)   .$$
\end{souslem}
\includegraphics*[width=11cm]{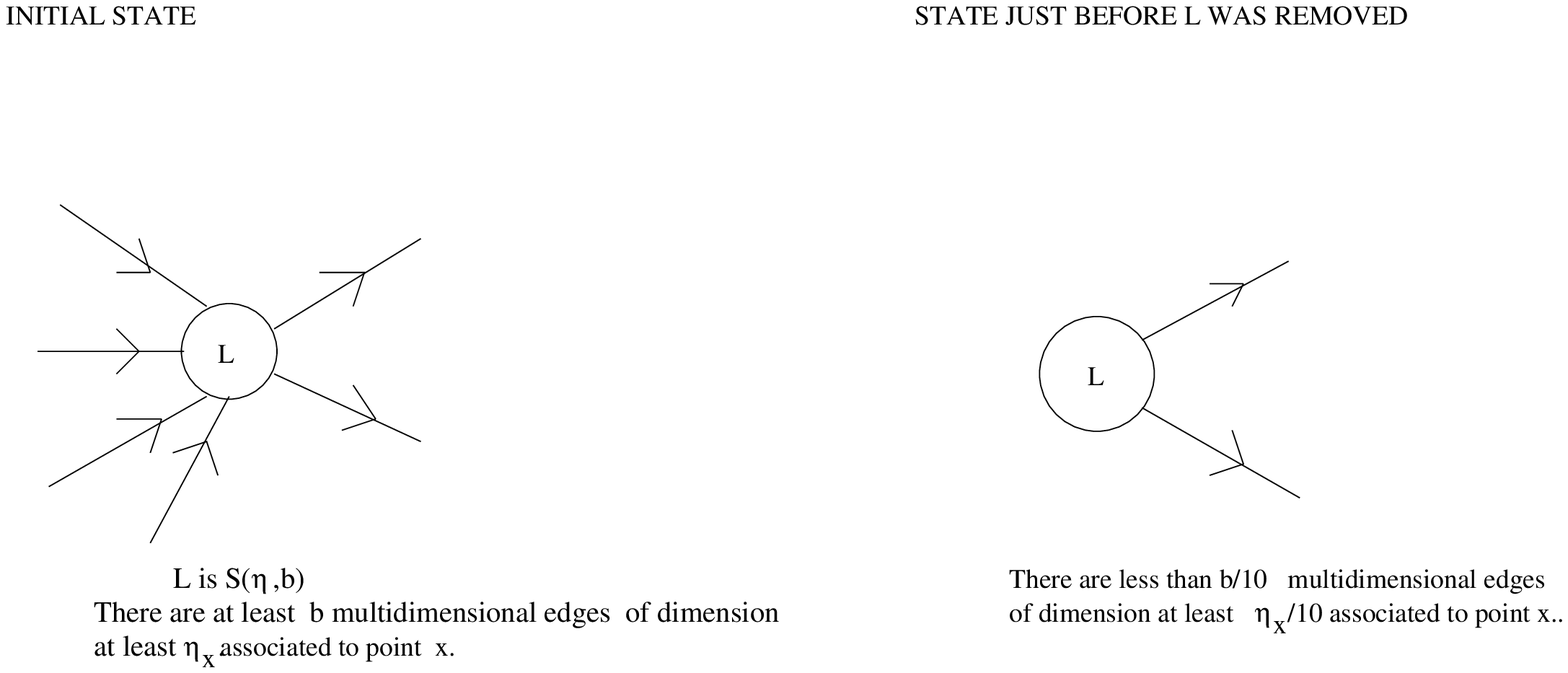}
\\
These pictures are  represented at step  $0$  on the 
left side  and at step  $k$ on the
right side.
\begin{proof}
It would be useful to notice that  for a multidimensional edge  $e$, the
sum of the weights (in the skeleton)  of edges coming from  $e$ and
adjacent to a point, is always equal to $ 1$. This is implied by our choice of
the weight. \\

The proof is divideds into  five parts.
\begin{enumerate} [A.]
\item
Let  $N_0$ the number of multidimensional edges at step $0$.  Since   $L$
is  $S^p_U(\eta,b),\ $  there are at least   $b$   multidimensional edges
attached to  $L$. So,
\begin{eqnarray} \label{N_0} N_0 \geq b .
\end{eqnarray}
Note that:
$$  \underset{ \underset {e \ contains  \ L}{ e\in E(\Gamma_U ) } }{\sum} w(e)  =N_0 .$$

\item
Let :\\
$ \underset{\downarrow}{L_1}  = \{ e\in      \underset{\downarrow}{L}, \
e \text{ coming from a multidimensional edge of} \ K_{U_k} ,\\
\hspace*{4.7cm}  \text{ associated to a point } x,  \text{ of}  \ dim\geq \eta_x/10   \}, $  \\
and \\
$ \underset{\downarrow}{L_2}  = \{ e\in      \underset{\downarrow}{L}, \
e \text{ coming from a multidimensional edge of } \ K_{U_k} ,\\
\hspace*{4.7cm} \text{ associated to a point } x, \text{ of} \ dim <
\eta_x/10   \}. $ \\
\\
We have:  $$\underset{\downarrow}{L} = \underset{\downarrow}{L_1} \cup 
\underset{\downarrow}{L_2} ,$$  because edges of  $  
\underset{\downarrow}{L}$,   are edges leaving  $L$  at step $k$.
\\
\\
\item  Since  $L$  becomes   $NS^p_{U_k}(\eta/10, b/10) $, there are less
than  $b/10 $   multidimensional edges associated to
each point $x$, of 
 dimension  at least   $\eta_x/10$. Call them  $f_1, ...,f_{q}, $  with  
$q < b/10.$    
\begin{eqnarray} \label{poid1}\underset{e\in  \underset{\downarrow}{L_1} }{\sum} w(e)  = 
 \underset{k=1..q}{\sum} \ \ 
    \underbrace {\underset{\underset{ coming \ from \   f_{k}}{e} }{\sum} w(e) }_{\leq 1} \leq q  .
    \end{eqnarray}
    (Initially this last sum was equal to  $1$,  but after removing
    some edges, this sum value becomes  less than  $1$.)\\
    \\
 \item  Let  $g_1,...,g_{h}$ be  the other  multidimensional edges attached
 to $L$ at step $k$  associated to a point $x$, and with  dimension
 strictly less  than   $\eta_x/10$.  We have $h \leq N_0-q.$\\
   Consider an edge $e$ coming from a multidimensional edge associated
to a point $x$. For all  $k=1...h$ we have:
 \begin{eqnarray} \label{poid2} \underset{\underset{coming \ from \  g_{k}}{e} }{\sum} w(e)  \leq 
 \frac{1}{\eta_x} \frac{\eta_x}{10} \leq \frac{1}{10} . 
 \end{eqnarray}  
Indeed, firstly since all configurations (relatively to this edge $e$) have initially dimension at least $\eta_x$
we deduce that   $ w(e) \leq 1/\eta_x$. And secondly  a multidimensional
edge of dimension less than $\eta_x/10$ gives less than $\eta_x/10$ 
edges in the skeleton.\\

\item Finaly by  (\ref{poid1}) and  (\ref{poid2}),  we get:
\begin{eqnarray*} 
\underset{e\in  \underset{\downarrow}{L} }{\sum} w(e)  &= &
\underset{e\in  \underset{\downarrow}{L_1} }{\sum} w(e)  +
\underset{e\in  \underset{\downarrow}{L_2} }{\sum} w(e) \\
&\leq&  q + (N_0-  q   )\frac{ 1}{10} \\
&=&  \frac{1}{10} N_0+ \frac{9}{10} q\\
&=& \frac{19}{100} N_0.
\end{eqnarray*}
($q < b/10 \leq N_0/10$ by (\ref{N_0}).)\\
\\
So,
$$ \underset{e\in  \underset{\downarrow}{A} }{\sum} w(e)  \leq
\frac{19}{100} N_0  \ \ and \ \
 \underset{e\in \underset{\uparrow}{A}}{\sum} w(e)  \geq  N_0 - \frac{19}{100}N_0=\frac{81}{100}N_0    .$$
So,  
\begin{eqnarray*}
\underset{e \in \underset{\downarrow}{A} }{\sum} w(e) 
  \leq \frac{19}{81}\underset{e \in  
\underset{\uparrow}{A}}{\sum} w(e) 
\leq \frac{1}{2}  \underset{e \in  
\underset{\uparrow}{A}}{\sum} w(e) .
\end{eqnarray*}

\end{enumerate}
\end{proof}

To finish the proof, let us consider:\\
$D_1=\{ \text{ vertices  \ removed \ at \ step} \  1 \},$ and for   $i\geq 2$ \\
$D_i=\{ \text{vertices } \ S^p_{U}(\eta,b) \ \text{removed \ at \ step }\  i \},$ \\
$F_i= \{ \text{edges \ between }\ D_i \ and \ D_{i-1}  \},$ \\
$ F_i'= \{ \text{edges \ leaving  }\ D_{i-1}  \}     $.\\
\\
Note that   $F_i \subset F_i'$  and that  the edges of    $F_i'$ are removed.\\


\includegraphics*[width=11cm]{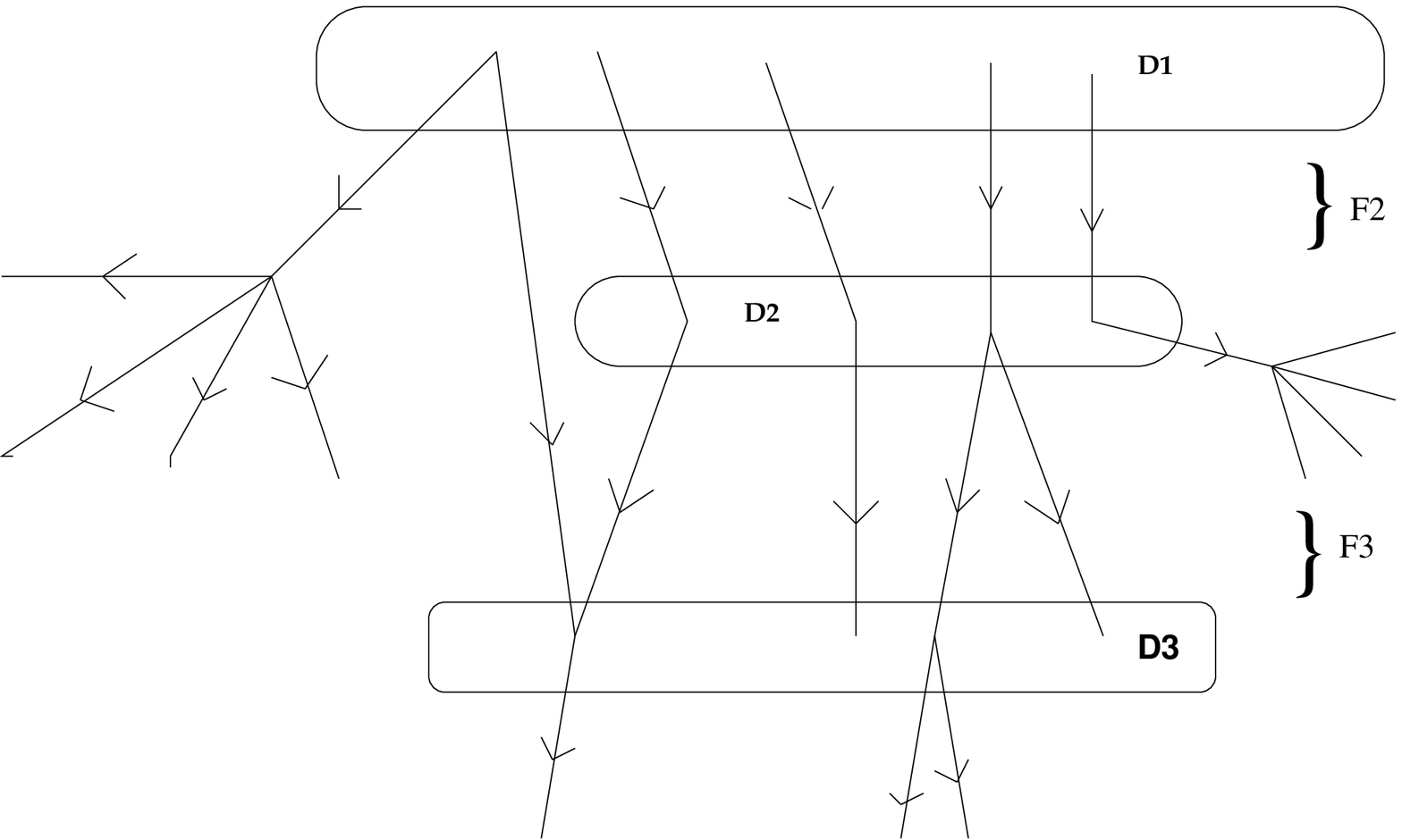}
\\
The proof ends up in four parts:
\begin{enumerate} [A.]
\item
Apply  sublemma  \ref{soustrucchiantsmain}  to  each point of  $D_i$, in
the graph staying at step   $i-2$.  
(Each point of   $D_i$  is  $S(\eta,b)$.) We get :
$$\forall i \geq 2 \ \  \underset{e\in F_{i+1}'}{\sum} w(e)  \leq \frac{1}{2} \underset{e\in F_i}{\sum}w(e) . $$

So, $$ \ \  \underset{e\in F_{i+1}'}{\sum} w(e) \leq (\frac{1}{2})^{i-1}  \underset{e\in F_2}{\sum} w(e) . $$
(We use that   $F_i \subset F_i'.$)\\
Hence,
\begin{eqnarray*}  \ \underset {e\in   \underset{i\geq 3}{\cup} F_i'}{\sum} w(e)   &\leq&
(\underset{i\geq 1} {\sum} (\frac{1}{2})^i)  \underset{e\in  F_2}{\sum} w(e) \\
&=&  \underset{e\in  F_2}{\sum} w(e) .
\end{eqnarray*}
\item Now, an edge of   $F_2$ is $NS_U^e(\eta,b)$
since  if it was
$S_U^e(\eta,b)$, it would link two points  
$S_U^p(\eta,b)$  and in particular    points  of   $D_1$  would have been  
$S_U^p(\eta,b),$  then  $S_U^p(\eta/10,b/10)$ 
and so would  not have been removed.
In consequence :
 $$\underset{e\in F_2}{\sum} w(e)  \leq \underset{e\in NS^e(\eta,b)}{\sum} w(e)=C_1  .$$
\item Besides, all removed edges   $S_U^e(\eta,b)$
are in some  $F_i'$
with   $i\geq 3$,  so
$$C_2=\underset{ 
\underset{e \in S_U^e(\eta,b) }
{ e \text{ \ removed \ at  \  the   \  end  \ of  \  the \    process} }}
{\sum}
w(e)      
\leq  \underset{e\in \underset{i\geq 3}{\cup} F_i'}{\sum} w(e) .$$ \\
\item  Hence,  $C_2\leq C_1$,  which achieves the proof. 
\end{enumerate}
\end{proof}

Now, we use the  following lemma to get a lower bound of the volume of  $U$.
\begin{lem} \label{choixsuccessifsuite} Let  $N:\R_+ \longrightarrow \R_+,$ 
a non decreasing function.
\\ Let  us take $b\in \N^*$  and   $\A$  a not empty set of configurations such
that :\\
 $ \forall f \in \A \ \exists x_1,x_2,...,x_b \in \Z \ \text{such that  }
  \forall i \in[|1;b|]   \ g_i \in \A $ 
\\
where  $g_i$ is one of the following functions, defined from $f$ by : \\
$g_i (x)=
\begin{cases}
 \;  f(x) & \text{if $x \neq x_i ,$} \\
  \; \text {there are   $N(|x_i|)$ possibilities for   $g_i(x_i)$  }  &
  \text{if
   $x=x_i,$  }
\end{cases}$
\\

then $ |\A| \geq  
 \left\lbrace
\begin{array}{l}
N(0) \ \Bigl( N(1) N(2)...N(\frac{b-1}{2}) \Bigr)^2   \hspace{1.9cm} \ \ 
\text{if \ b \ is\ odd,} \\
 N(0) \ \Bigl( N(1)N(2)...N(\frac{b-2}{2})\Bigr)^2 N(\frac{b}{2}) 
 \hspace{1.1cm} \ \   \text{if \ b \ is  \ even.}\\
\end{array}
\right.$
\end{lem}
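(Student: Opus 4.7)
The plan is to proceed by induction on $b$, stratifying $\A$ by the value at a single well-chosen coordinate. First note that the conclusion of the lemma can be written uniformly as $|\A|\geq \prod_{i=1}^{b} N(\lfloor i/2\rfloor)$ for both parities of $b$. The base case $b=1$ is immediate: for any $f\in\A$ with associated $x_1$, the hypothesis produces at least $N(|x_1|)\geq N(0)$ distinct configurations in $\A$.

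For the inductive step $b\mapsto b+1$, pick $f_0\in\A$ together with its (distinct) associated positions $x_1,\dots,x_{b+1}\in\Z$. Since the integer interval $\{-k,\dots,k\}\subset\Z$ has $2k+1$ elements, among any $b+1$ distinct integers at least one has absolute value $\geq \lfloor(b+1)/2\rfloor$; let $x^*$ be such a position. Applied at $f_0$, the hypothesis then produces $N(|x^*|)\geq N(\lfloor(b+1)/2\rfloor)$ configurations in $\A$ with pairwise distinct values at $x^*$. Stratify $\A$ by the value at $x^*$: for each $v$ set $\A_v=\{f\in\A:f(x^*)=v\}$; the previous step shows that at least $N(|x^*|)$ of these strata are non-empty. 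For each non-empty $\A_v$ and each $f\in\A_v$, among the $b+1$ positions associated to $f$ at most one can equal $x^*$, leaving at least $b$ positions distinct from $x^*$; varying $f$ at any such position leaves $f(x^*)=v$ unchanged and hence stays inside $\A_v$. Thus $\A_v$ itself satisfies the hypothesis with $b$ in place of $b+1$, and the inductive hypothesis yields $|\A_v|\geq \prod_{i=1}^{b} N(\lfloor i/2\rfloor)$. Summing over non-empty strata gives
\begin{equation*}
|\A|\geq N(|x^*|)\prod_{i=1}^{b} N(\lfloor i/2\rfloor)\geq \prod_{i=1}^{b+1}N(\lfloor i/2\rfloor),
\end{equation*}
which closes the induction.

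The main point requiring care is verifying that each non-empty stratum $\A_v$ genuinely inherits the hypothesis with one fewer direction. This uses the distinctness of the $x_i$'s attached to any given configuration (so only one of them can coincide with $x^*$) together with the fact that a variation at a position $\neq x^*$ preserves the $x^*$-coordinate. The pigeonhole lower bound $|x^*|\geq \lfloor(b+1)/2\rfloor$ is exactly what makes the product in the statement sharp: the worst case is attained when the available positions for each configuration cluster around the origin as $0,\pm 1,\pm 2,\dots$, consistent with $N$ being non-decreasing.
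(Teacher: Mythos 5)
Your proof is correct and follows essentially the same route as the paper's: induction on $b$, a pigeonhole choice of a coordinate of large absolute value among the (implicitly distinct) points attached to a configuration, stratification of $\A$ by the value at that coordinate, and the observation that each non-empty stratum inherits the hypothesis with one fewer direction. Your uniform rewriting of the bound as $\prod_{i=1}^{b}N(\lfloor i/2\rfloor)$ and the explicit justification of the pigeonhole and of the inheritance step merely streamline the paper's parity-split computation.
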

\begin{proof}
We  will proceed  by induction on   $b$.\\
If $b=1$  it is true,  since   $N$ is non decreasing on $\R_+$ .\\
Assume   $b\geq 1$  and consider a   point $x_0 $  in the base such that:\\
 $\bullet |x_0|\geq \frac{b-1}{2} $  if  $b$ is odd and   $|x_0| \geq
 \frac{b}{2}$ if  $b$ is even. \\
  $\bullet $  And there exists   $f_1,...,f_{N(|x_0|)} \in \A $ satisfying  
    $\forall i \in [|1;N(|x_0|) |] \ \  f_i(x_0) $ 
 range among the   $N(|x_0|) $  possible images.\\
 For   $i\in [| 1;N(|x_0|) |]$,  we denote by 
$\A_i$ the set $ \{f\in \A ; f(x_0)=f_i(x_0)   \}, $ 
which is not empty.\\
We have   $\A= \underset{1\leq i \leq N(|x_0|) }{\dot{\bigcup}}  \A_i.$ \\
Besides,  the  $\A_i$  satisfies  the induction assumption with constant  $b-1$. \\
So, if for example   $b$ is odd, $N(|x_0|) \geq  N(\frac{b-1}{2}) $  and
we have: 
\begin{eqnarray*}
|\A| &=&\underset{1\leq i \leq N(|x_0|) } {\sum} | \A_i| \\
 &\geq &  
 \underset{1\leq i \leq N(|x_0|)} {\sum}  N(0) \ 
 \Bigl(   N(1)...N(\frac{b-3}{2})  \Bigr)^2 \ N(\frac{b-1}{2}) \\
 &\geq& N(0) \ \Bigl(   N(1)...N(\frac{b-3}{2})  \Bigr)^2 \ 
 N(\frac{b-1}{2})  N(x_0)  \\
 &\geq&   N(0)   \Bigl(   N(1)...N(\frac{b-1}{2})  \Bigr)^2.     
 \end{eqnarray*}

The proof unfolds the same way when $b$ is an even
number. 
\end{proof}

\subsubsection{Proof  of   (i) of the  proposition
\ref{existence} :} $\ $ \\
$\bullet $ \textbf{Lower bound of Folner function.}\\ For the lower bound of $Fol_{D_F}$, take $U \subset V(A'\wr B'_z)$  such that  
$\frac{|\partial_{D_F} U|}{|U|} \leq 
\frac{1}{1000n}$
Let $\tilde{K}=\Bigl(V(\tilde{K}), \xi(\tilde{K} ) \Bigr)$ the
subhypergraph of $K_U$  constructed with  points $(x,f)$ which are   $Fol_{B_x'}(n)/3-good$.
 $\tilde{K}$  is not empty, since by the part  (i) of the  lemma \ref{neuds}
  $|V(\tilde{K})|  \geq
(1-\frac{1}{1000n})|U|.$ \\
 Then we have:
\begin{eqnarray*}
 \underset{ e\in E(\Gamma(\tilde{K}) )\cap NS^e(
 \frac{\lambda(n)}{3},\frac{\psi(n)}{3} ) }{\sum} w(e)
  &\leq& \frac{1}{2} \#\{ u\in U ; NS^p \Bigl(  
  \frac{\lambda(n)}{3},\frac{\psi(n)}{3}           \Bigr)  \} \\
    &\ &\hspace{4.4cm}\mathrm{ by \ remark  \ (\ref{remcardinalpoids} )}\\
&\leq &  \frac{1}{1000}|U  | \hspace{3.1cm} \mathrm{ by \ lemma  \ \ref{neuds}} (ii)\\
  &\leq&  \frac{1}{ 1000-\frac{1}{n} } \ \#\{ u=(x,f)\in U, \
  \frac{\lambda_x(n)}{3}-good\} \\
  &\ & \hspace{4.35cm} \mathrm{ by \ lemma \  \ref{neuds}} (i)\\
 &= &\  \frac{2}{ 1000-\frac{1}{k} }   \underset{e\in  E(\Gamma(\tilde{K} )  ) }{\sum} w(e)  \\
 &\leq &\  \theta  \underset{e\in  E(\Gamma(\tilde{K} )  ) }{\sum} w(e) .    \\
\end{eqnarray*}
 with  $\theta= \frac{2}{ 999 }<  \frac{1}{2}$, so  lemma \ref{soushypergraphes}  can be applied to  $\tilde{K}$, to deduce there
 exists a subgraph   $K'=(V(K'),E(K'))$ of   $ \tilde{K} $ such
 that all edges are   $S^e(\lambda(n)/30, \psi(n)/30)$.\\
 Then by lemma \ref{choixsuccessifsuite} applied with 
  $N(|x|) = Fol_{B_x}(n)/30$ to  the set of configurations
  relatively to $K'$, we deduce  for large enough $n$ :
  $$|U| \geq l(0) \ \Bigl( l(1)...l(\frac{\psi(n)}{40}) \Bigr) ^2 =
 \frac{F(1)}{F(0)} \ \Bigl(\frac{F(2)}{F(1)}  ...\frac{F(n/40 +1
 )}  {F(n/40 )  } \Bigr)^2.$$
 (We use that for $k \geq 3,\ Fol_{B_x'}(k)= |B_x'|=l(|x|)  =
 \frac{F(|x| +1)}{F(|x| )}$.)\\
So, $$|U| \geq c F(n/40)^2  \succeq   F(k).$$ 
(Since   $F(x)=e^{cx^
{    \frac{2\alpha }{1-\alpha}    }
}$ we have   
$ F\approx F^2.$) \\
ie :$$ Fol_{D_F}(k) \succeq F(k).$$
$\bullet $\textbf{Upper bound of Folner function.} \\For the upper bound of the Folner  fonction of  $D_F$,
we take:
 $$U= \{(a,f) ; 0\leq a \leq n \ ; supp(f) \subset 
[|0;n|] \} .$$
On a  $$|U|= n F(n)  \ \text{et} \ 
|\partial_{D_F} U| /|U| \leq c/n,$$
so, $$Fol_{D_F} (n) \leq n F(n) \preceq F(n) .$$
$\bullet $ So the graph $D_F $ has the expected Folner function
on the case $\alpha >1/3.$\\
\\
\subsubsection{Proof  of   (ii) of the  proposition \ref{existence} :} 
We proceed in 5 steps.
\begin{enumerate}[A.]
\item  Let  $d_0=(0,f_0)$ where  $f_0$  is the null configuration.\\
Let $H_n=(K_n,g_n)$  the random walk on   $ D_F$
starting from  $d_0$ which jumps uniformly on the set of points
formed by the point where the walker is and its neighbors. \\
This random walk admits a reversible measure  $\mu$ defined by 
$\mu(x) =\nu_{D_F}(x) + 1$. Note that for all  $x\in V(D_F), 
\mu(x) \leq 5 .$\\
\item  Using reversiblity, we can write, 
\begin{eqnarray*}
p^{D_F}_{2n}(d_0,d_0) &=& \underset{z}{\sum} p_n^{D_F}(d_0,z)
p_n^{D_F}(z,d_0)\\
&\geq& \underset{z \in A}{\sum}  p^{D_F}_n(d_0,z) ^2  \ \frac{\mu(d_0)}
{\mu(z)}\\
&\geq& \frac{\mu(d_0 )}{\mu(A)}[ \underset{z \in A}{\sum} p^{D_F}_n(d_0,z)]^2\\
&\geq& \frac{\mu(d_0)}{\mu(A)}[  \P^{D_F}_{d_0} (H_n \in A)]^2,
\end{eqnarray*}
where  $A$ is  some subset of  $V(D_F).$\\ 
Choose  $A=A_r=\{  (a,f) \ ; \ | a| \leq r \ and \ supp(f)\subset [-r,r] \}. $ \\
\item The structure of edges on $D_F$ implies:  
\begin{eqnarray*}
\P^{D_F}_{d_0}(H_n \in A_r) &\geq& \P^{D_F}_{d_0} 
( \forall i \in [|0,n|] \ |K_i|\leq r  ) \\
&\geq&  \P_0^K ( \forall i \in [|0,n|] \ |K_i|\leq r  ),
\end{eqnarray*}
where  $\P_0^K$ is the law of $(K_i)$ which is again a random
walk with  probability transitions that can be represented    for
$n$ large enough by :\\
$$\includegraphics*[width=6cm]{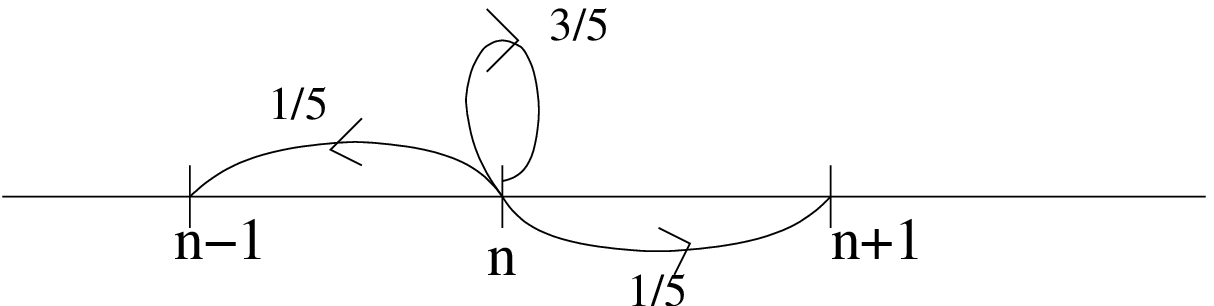}$$
Indeed, as soon as  $l(|n|) > 3$,  the  point $(n,f)$ has  2 
neighbors in "configuration", 2 neighbors in "base"  and itself
as neighbor.
For this walk we can prove (as in proposition 5.2  in  \cite{KL}) that :
$$\exists c>0, \forall n\geq 0 \  \ \     
\P_0^K ( \forall i \in [|0,n|] \ |K_i|\leq r  ) \geq   e^{-c(n/r^2 +r)}.$$
In fact, a better bound holds $\P_0^K ( \forall i \in [|0,n|] \
|K_i|\leq r  ) \geq   e^{-cn/r^2}$ (see lemma 7.4.3 of 
\cite{SPbook})  but it is not necessary here.\\
Thus,
\begin{eqnarray} \label{Z2}
\P^{D_F}_{d_0}(H_n \in A_r) \geq e^{-c(n/r^2+r)}.
\end{eqnarray}
\item  Compute now $\mu(A_r)$,  we have:
 \begin{eqnarray*}
 \mu(Ar) &\leq& |Ar| \max_{A_r} \mu  \\
 &\leq& (2r+1) \frac{F(1)}{F(0)} (\underset{k=1..r}{\prod} \frac{ F(k+1}{F(k) })^2 \times 5 \\
  &\leq&  C r F(r+1)^2 \\ 
  &\preceq&  F(r).  
 \end{eqnarray*}
( This last inequality comes from the form of $F(r)$ in
 $e^{cr^ {\frac{2\alpha}{1-\alpha } } }$.)
\item Gathering the results, by  inequality  (\ref{Z2}) and the
fact that  $\frac{2\alpha}{1-\alpha }\geq
1$,   we deduce that it  exists $c>0$ such that:
$$   p^{D_F}_{2n}(d_0,d_0)  \geq 
 e^{ -c( \frac{n}{r^2} +  r^ {\frac{2\alpha}{1-\alpha } }   )}  .$$
The  function  $r \mapsto
\frac{n}{r^2}+r^{\frac{2\alpha}{1-\alpha} }$ is  minimal  for  
$r$  like 
$n^{\frac{1-\alpha} {2}}$.\\
So , it exists $c>0$ such that:
$$p^{D_F}_{2n}(d_0,d_0) \geq e^{-c n^{\alpha}} .$$
\end{enumerate}

\begin{rem}  Note that by proposition \ref{propfol} and with
our estimate of $Fol_{D_F},$  we have for all $x,y$ in $D_F,$
$p^{D_F}_{2n}(x,y) \preceq e^{- n^{\alpha}}$. So
$p^{D_F}_{2n}(d_0,d_0) \approx e^{- n^{\alpha}}$
\end{rem}
\subsection{case $ 0 \leq \alpha \leq \frac{1}{3} $ \label{d}}
\subsubsection{ Construction of the graph and preliminary lemmas.}
Consider the general following context: let  $A$ and  $B$
 two  graphs and  $\phi $ an  application  $A\rightarrow A'$.
 Now we look at the graph such that:\\
- the  points  are  elements of  $(A\times B^{A'})$, \\
- edges are  couple $((a,f);(b,g))$ such that :\\
(i) either   $\forall x \in A'$, $f(x)=g(x)$ and  $a$  is
neighbor of  $b$ in $A$.\\
(ii) either  $a=b$ and    $\forall x \neq \phi(a)\ 
f(x)=g(x)$ and 
$f(\phi(a)) $ is neighbor of  $g(\phi(a))$ in  $B$.\\
\\
Such graphs are called generalized wreath products.\\
If $A'=A$  and  $\phi=id$  we retrieve our ordinary  wreath
products.\\

Case which interest us is when   $A=A' =(\Z,E(\Z) )$ and 
$B$ is the Cayley  graph of  $\frac{\Z}{2\Z}$ with 
$\bar{1}$ as generator. \\
 To define  $\phi: \Z\rightarrow \Z$, it is sufficient to
 give the following sets $A_i=\{ x; \phi(x)=i \}$, which
 should form  a  partition  of $\phi(\Z)$ (which is here
 $\Z$). Let  $\A=\{A_i\}$, we note  $ A {\wr}_{\A} B $  the generalized wreath
product considered.\\
\\
Let  $\beta= \frac{2\alpha}{1-\alpha} <1.$\\
If we want a Folner function like $e^{n^{\beta} }$, 
we should construct  $\phi$  (or the partition $\A$) 
with some redundancies. Suppose for example that Folner
sets are : 
\begin{eqnarray} \label{U_n} U_n=\{ (a,f);\ a\in[-n;n] \ et \ supp(f) \in [-n;n] \},
\end{eqnarray} we should have 
 $$\#\phi ([|-n;n|] ) = \{ i;\ A_i\cap[-n;n] \neq \emptyset\}\approx n^{\beta}.$$ 
\\
For  $\Omega\subset A$, it would be useful to introduce: 
$$N^{\A} (\Omega)=\#\{  i;\ A_i\cap \Omega \neq \emptyset \},$$
and $$S_j( \Omega )=\#(A_j\cap  \Omega ) .   $$
In particular, let:
$$N^{\A} (k,k+m)= N^{\A} ( [k,k+m] ) \ \  \text{et} \ \  S_j(k,k+m)= S_j( [k,k+m[) .   $$
The following  lemma   gives us the construction of the
partition which answers to our problem. 
\begin{lem}  \label{partition}$\ $ \\
Let $g :\N\rightarrow \N$ increasing with $g(1)=1$ such
that  for all $n$  in  $\N$, $$g(2n)\leq 2g(n).$$  
Then there exists a partition $\A_g =\{A_i\}$ of  $\Z$
satisfying: \\
(i)  for all  $m\geq 0 $ and for all $k$ in $\Z$,
$$ N^{\A_g} (k,k+m) \approx g(m),$$
(ii) there  exists $K>0$ such that for all  $ m\geq 0$, for
all  $k$ in $\Z$ 
and for all  $i,j $ in  $S_j(k,k+m)\neq 0$:
$$ \frac{S_i(k,k+m) }{S_j(k,k+m)}\leq K.$$
\end{lem}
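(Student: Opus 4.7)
The plan is to construct the partition dyadically, exploiting the growth condition $g(2n) \leq 2g(n)$. Write $g_n := g(2^n)$, so that $g_n \leq 2 g_{n-1}$. I will produce a labeling $\phi : \Z_{\geq 0} \to \N$, set $A_i := \phi^{-1}(i)$, and extend to $\Z$ by reflection; the partition $\A_g$ is this family.

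Inductively on $n$, I construct a labeling $\phi_n$ of $[0, 2^n)$ using exactly $g_n$ distinct labels, with each occurrence count within a factor $2$ of $2^n/g_n$, and compatible with $\phi_{n-1}$ on $[0, 2^{n-1})$. Starting from $\phi_0(0) = 1$, the inductive step keeps $\phi_{n-1}$ on the first half $[0, 2^{n-1})$ (using labels $\{1, \ldots, g_{n-1}\}$) and copies $\phi_{n-1}$ on the second half $[2^{n-1}, 2^n)$ under a relabelling: the $2g_{n-1} - g_n \geq 0$ labels currently of smallest total size are re-used (their classes then span both halves), while the remaining $g_n - g_{n-1}$ positions get fresh labels $\{g_{n-1}+1, \ldots, g_n\}$. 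The total label count in $[0, 2^n)$ is then $g_n$, and by cycling through the labels in a round-robin fashion over successive scales, every label gets ``doubled'' roughly the same proportion of the time, so all class sizes stay within a bounded factor of $2^n/g_n$ on $[0, 2^n)$.

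Property (i) for an arbitrary interval $[k, k+m]$ then follows by dyadic comparison: pick $n$ with $2^{n-1} \leq m < 2^n$, observe that $[k, k+m]$ is contained in the union of at most $2$ dyadic intervals of length $2^n$ and itself contains a dyadic interval of length $\geq 2^{n-2}$, and use the doubling hypothesis $g_j \leq 2g_{j-1}$ to control everything in terms of $g_n$. Since $g(m) \in [g_{n-1}, g_n]$, this yields $N^{\A_g}(k,k+m) \approx g(m)$. For property (ii), the size invariant gives that every class hitting a dyadic interval $I$ of length $2^n$ occupies between $c \cdot 2^n/g_n$ and $C \cdot 2^n/g_n$ positions of $I$. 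Covering $[k, k+m]$ by a bounded number of dyadic intervals of length $2^n$, and noting that a sub-interval can decrease an intersection by at most a bounded factor, produces the desired uniform constant $K$.

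The main obstacle is (ii). A naive choice of which labels to re-use at each inductive step (say, always the same ones) would produce classes whose sizes differ by exponential factors after many scales. The balancing scheme, whereby each label is re-used about the same number of times as the construction proceeds, is what makes the ratio $S_i/S_j$ uniformly bounded across all scales and all (not necessarily dyadic) intervals.
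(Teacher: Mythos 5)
Your construction is essentially the paper's: build the partition on dyadic intervals by copying the previous scale onto the second half, re-using the $2g_{n-1}-g_n$ currently smallest classes and giving fresh labels to the copies of the remaining ones, then reflect to $\Z$; and your proof of (i) by dyadic comparison (an interval of length $m$ contains a block of length $\geq 2^{s-2}$ and is covered by two blocks of length $2^{s-1}$, plus the doubling of $g$) is the paper's argument. The first gap is in the balance invariant at dyadic scales. The rule you state is greedy (re-use the classes of smallest size), yet you justify the invariant by an appeal to ``round-robin'' cycling and to every label being doubled ``roughly the same proportion of the time''; that is not what your rule does, and ``roughly the same proportion'' would not suffice anyway, since a discrepancy of $k$ doublings costs a factor $2^k$. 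Moreover, the invariant you chose --- each class size within a factor $2$ of the mean $2^n/g_n$ --- is not inductively stable by itself under your rule: from sizes $1,1,3,3$ (mean $2$, all within a factor $2$ of it) with $g_{n-1}=4$, $g_n=7$, only one size-$1$ class doubles while the copies of the three largest get fresh labels, producing a class of size $1$ against a new mean of $16/7$. What does propagate, and what the paper proves, is the ratio bound $S_i(1,2^s)/S_j(1,2^s)\le 2$: if class $i$ doubles and class $j$ does not, then $S_i\le S_j$ before the step because only the currently smallest classes double, so the ratio afterwards is still at most $2$. That one-line ordering argument is the missing ingredient; with it, your induction and your proof of (i) go through (you also use implicitly, and should state, the self-similarity of the construction: every dyadic block of length $2^s$ carries a relabelled copy of the pattern on $[1,2^s]$, which is the paper's (\ref{re1})--(\ref{re2})).

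The more serious gap is the passage from dyadic blocks to arbitrary intervals in (ii). Your step ``a sub-interval can decrease an intersection by at most a bounded factor'' is false as a general principle: a class can meet $[k,k+m]$ in a single point clipped at the edge of the interval while meeting a covering dyadic block in $\asymp 2^s/g(2^s)$ points, so the in-block ratio bound gives no direct control on $S_i(k,k+m)/S_j(k,k+m)$. The paper's proof of (ii) is built precisely to exclude this: choosing $s$ so that $[k,k+m]$ contains the full block $[(C+1)2^{s-3},(C+2)2^{s-3}]$ and is contained in $[C2^{s-3},(C+5)2^{s-3}]$, it bounds each clipped piece $S_j(C2^{s-3},k)$ and $S_j(k+m,(C+5)2^{s-3})$ by $2\,S_j(k,k+m)$, using the equivalence of adjacent blocks (there is a class $j_1$ occupying in the next block the positions that $j$ occupies in the previous one) together with the in-block ratio bound, and this is what produces the explicit constant $K=10$. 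Some argument of this kind, showing that any class meeting $[k,k+m]$ cannot have its mass inside the covering blocks concentrated outside $[k,k+m]$ relative to its mass inside, is indispensable; as written, your covering step does not prove (ii).
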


\begin{proof} $\ $ \\
\begin{enumerate}[A.]
 \item We first define  partition on  intervals
 $[1,2^s]$  ($s\geq 0$) by induction on  $s$, such that :
\begin{eqnarray*}
(\mathcal{P}_s) \ \ \ \ \begin{cases} \;N^{\A_g} (1,2^s)= g(2^s),\\
\;\frac{S_i(1,2^s) }{S_j(1,2^s)}\leq 2  \ \ \  \text{for } S_j(1,2^s)\neq 0 .
\end{cases}
\end{eqnarray*}
$\bullet$ For $s=0$,  we put the point  $1$  in some 
$A_i$, since $g(1)=1$ 
(for example $A_1$).\\
$\bullet$ Let  $s\geq 1$ and suppose now the partition is
built on $[1,2^s]$. 
We  extend this  partition to $]2^s,2^{s+1}]$.\\
Let $A_1, A_2,...,A_{g(2^s)} $ the partition  on 
$[1,2^s]$ given by induction assumption. \\
Rank by decreasing cardinal these sets: 
$A_{i_1}, A_{i_2},...,A_{i_{g(2^s)} }. \ \ \ $  (*) \\
ie:  $ \ \ \#(A_{i_1}\cap [1,2^s]) \geq  \#(A_{i_2}\cap [1,2^s])\geq...\geq
\#(A_{i_{g(2^s)} } \cap [1,2^s]) $. \\
(*)  is only  to  get  (ii). \\ \\
Let $j\in ]2^s,2^{s+1}]$, there exists  $i_k$  such that  $j-2^s \in  A_{i_k},$\\
-if $k > g(2^{s+1} ) -g(2^s) $, we put  $j$  in  $A_{i_k}$,\\
-otherwise,  we put  $j$  in a  "new " class, $j\in A_{g(2^s) + k}$. \\
\\
Thus we have :
\begin{eqnarray*}
N^{\A_g}(1,2^{s+1})& =& N^{\A_g}(1,2^s) + \#\{ k\in [1,g(2^s) ]; \ k\leq  g(2^{s+1})-g(2^s) \}\\
&=&  g(2^s)+ g(2^{s+1})-g(2^s)\\
&=& g(2^{s+2}).
\end{eqnarray*}
Besides, note that by  construction either 
$S_i(1,2^{s+1})= S_i(1,2^s)$ or either  $S_i(1,2^{s+1})= 2S_i(1,2^s)$.
So the second assertion of ($\mathcal{P}$)  is well
satisfied at the rank  $s+1$, 
except when  $S_i(1,2^{s+1})$  has doubling and 
$S_j(1,2^{s+1})$ is unchanged.
But in this case, by (*)  we have  $ \#( A_i\cap[1,2^s])\leq \#(A_j\cap[1,2^s]) $,
that could be written $S_i(1,2^{s})\leq S_j(1,2^{s})$.
So,
$$ \frac{S_i(1,2^{s+1}) }{S_j(1,2^{s+1})}= 2\frac{S_i(1,2^s) }{S_j(1,2^s)}
\leq 2.$$

\item We end up the construction of the  partition on
$\Z$ as follow:  for $j\leq 0, \ $ we put $j\in A_i$
where  $-j+1\in A_i$.  we call  $\A^g$ this partition.\\
\\
\item Let us check conditions  (i) and  (ii).\\
First, notice that for all integers  $A$ and for all
$s\geq 0$,   partitions on$[1,2^s]$   and  $[A2^s
+1,(A+1)2^{s+1} ]$ are equivalents.
And in particular we have:
 \begin{eqnarray}
 \label{re1}  N^{\A_g}(0,2^s)= N^{\A_g}(2^s A, 2^s(A+1) ),\\
 \label{re2}
\text{  et} \ \   \frac{S_i( 2^s A, 2^s(A+1)) }{S_j(  2^s A, 2^s(A+1))}
\leq 2 . 
\end{eqnarray}
Consider $k\in \Z$  and  $m\geq 0$.\\
Let $s\geq 0$ be  such that $2^{s-2}< m\leq 2^{s-1}$ and let  
$A=\min\{D;\ k\leq D2^{s-2} \}$.
 We have $[A2^{s-2}, (A+1)2^{s-2}]\subset [k,k+m]$  and
 then
\begin{eqnarray*} 
N^{\A_g}(k,k+m ) &\geq& N^{\A_g}(2^{s-2} A, 2^{s-2}(A+1) ) \\
&=&  N^{\A_g}(0,2^{s-2}) \\
&=& g( 2^s/4) \\
&\geq& g(m/4) \\
&\succeq& g(m).
\end{eqnarray*}

Let $B=\max\{D;\ D2^{s-1 } \leq k \}$, we have  $[k,k+m]\subset 
[B2^{s-1}, (B+2)2^{ s-1} ]$. So,
\begin{eqnarray*} 
N^{\A_g}(k,k+m ) &\leq& N^{\A_g}(B2^{s-1} , (B+2)2^{s-1}  ) \\
&=&  N^{\A_g}( B2^{s-1} , (B+1)2^{s-1} )  + 
N^{\A_g}( (B+1)2^{s-1} , (B+2)2^{s-1} )\\
&=& 2 g( 2^{s-1} ) \\
&\leq&  2 g(2m)\\
&\preceq & g(m).
\end{eqnarray*}
That proves  (i). \\
 \\
Let now  $C=\max\{D;\ D2^{s-3}\leq k \}$,  by the definition
of $s$,  it is easy to verify that :
\begin{eqnarray} \label{incl}[(C+1)2^{s-3}, (C+2)2^{s-3}  ] \subset[ k,k+m]
 \subset [ C2^{s-3}, (C+5)2^{s-3} ] .
 \end{eqnarray} 
$$\includegraphics*[width=11cm]{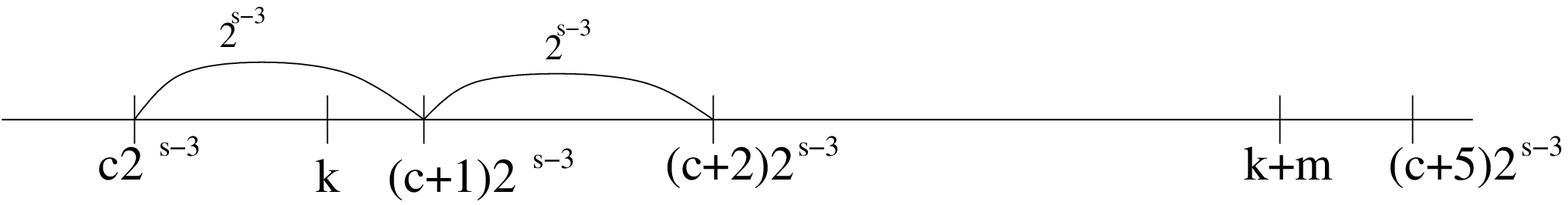}$$
 Let  $i,j$  be the subscript  which index the  partition
 such that 
  $S_i(k,k+m)\neq 0$ and  $S_j(k,k+m)\neq 0$,   we can
  write,
 \begin{eqnarray}
  \nonumber S_i(k,k+m) &\leq&  S_i(C2^{s-3}, (C+5)2^{s-3}    ) \\
 \nonumber &\leq&  2  S_j(C2^{s-3}, (C+5)2^{s-3}    )   \ \ \ \text{par (\ref{re2})} \\
  \label{ouarf}&=& 2 [ S_j( C2^{s-3}, k) + S_j(k,k+m) + S_j(k+m, (C+5)2^{s-3} )]. 
 \end{eqnarray}
 Consider the  terms $S_j( C2^{s-3}, k)$ and $ S_j(k+m, (C+5)2^{s-3} )$.\\
 First we have $S_j( C2^{s-3}, k)\leq S_j(C2^{s-3},(C+1)2^{s-3} )$.\\
 Besides, there exists  $j_1$  such that  
$$ S_j(C2^{s-3},(C+1)2^{s-3} )= S_{j_1} ((C+1)2^{s-3},(C+2)2^{s-3}   ).$$ 
We deduce  
\begin{eqnarray*}
 S_j( C2^{s-3}, (C+1)2^{s-3} )  &=&  S_{j_1} ((C+1)2^{s-3},(C+2)2^{s-3}   ) \\
 &\leq& 2 S_{j} ((C+1)2^{s-3},(C+2)2^{s-3}   ) \ \
 \text{by (\ref{re2})}\\
 &\leq&  2S_j(k,k+m) \ \ \ \text{by the first inclusion  of  (\ref{incl})}
 \end{eqnarray*}
 By using the same approach, we prove,
 $ S_j(k+m, (C+5)2^{s-3} ) \leq 2 S_j(k,k+m) .$
 Finaly with (\ref{ouarf})  we get,
 $$S_i(k,k+m) \leq K S_j(k,k+m) \ \ \ \text{with}\  K=10.$$
 That proves  (ii). 
 \end{enumerate}
\end{proof}
\begin{rem}  The property  (ii) of lemma \ref{partition},
can be extend immediatly for all finite set  $\Omega$.
Indeed, we have for each connected component  $\Omega^s$
of  $\Omega$, $S_i(\Omega^s) \leq K S_j(\Omega^s)$.
Then summing on  $s$, we get  $S_i(\Omega) \leq K S_j(\Omega)$
\end{rem} \label{theremark}
 Before showing that the graph 
 $A\wr_{\A_g} B$  is solution of our problem,  let us
 notice the following property of the  partition $\A_g$, 
 that will be useful in the next. 
 \begin{lem} \label{noel} Let $g$  satisfying assumptions
 of property 
 \ref{partition} and  $\A_g=\{A_i\}$  the  associated partition.
 There  exists  constants $c_1,c_2>0$ such that for all  $\Omega\subset\Z$, 
  satisfying  $\frac{|\partial_{A'} \Omega|}{|\Omega|}\leq \frac{1}{k},\ $ 
 for all 
  $\Omega_{\delta} \subset \Omega$  such that     
  $|\Omega_{\delta}|\geq \delta|\Omega|$,  ($\delta >0$) 
  we have:    
 $$\#\{i;\ A_i\cap \Omega_{\delta}\neq \emptyset \}
 \geq c_1\frac{\delta}{2K} g(c_2 Fol_A(k)  ) ,$$
 where  $K$  is the  constant which appears in the item 
 (ii) of  lemma \ref{partition}.
 \end{lem}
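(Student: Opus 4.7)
My approach is a two-level pigeonhole argument. The Folner hypothesis on $\Omega \subset A'=\Z$ forces $\Omega$ to contain a long interval, on which part (i) of Lemma~\ref{partition} produces many classes $A_i$; the extension of part (ii) to arbitrary finite sets, stated in the preceding remark, then forces all of these classes to have comparable sizes in $\Omega$, so any subset of density $\delta$ must meet a proportional number of them.

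\smallskip

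Step 1: locate a long interval inside $\Omega$. In $A'=\Z$, each connected component of the finite set $\Omega$ is an interval contributing exactly two edges to $\partial_{A'}\Omega$, so the number $N$ of components satisfies $2N = |\partial_{A'}\Omega| \leq |\Omega|/k$, i.e.\ $N \leq |\Omega|/(2k)$. By pigeonhole at least one component $I^{*}$ has
\[
|I^{*}| \;\geq\; |\Omega|/N \;\geq\; 2k \;=\; Fol_{A'}(k).
\]
Applying Lemma~\ref{partition}(i) to $I^{*}$ yields, for some constants $c, c_2 > 0$ coming from the $\approx$ in (i),
\[
N^{\A_g}(\Omega) \;\geq\; N^{\A_g}(I^{*}) \;\geq\; c\, g(|I^{*}|) \;\geq\; c\, g\bigl(c_2 Fol_{A'}(k)\bigr).
\]

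\smallskip

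Step 2: propagate to $\Omega_\delta$ via the extended (ii). By the remark following Lemma~\ref{partition}, all classes $A_i$ that meet $\Omega$ satisfy
\[
|A_i \cap \Omega| \;\leq\; K\, \min_{j\,:\,A_j\cap\Omega\neq\emptyset} |A_j \cap \Omega| \;\leq\; \frac{K|\Omega|}{N^{\A_g}(\Omega)}.
\]
Since $\Omega_\delta \subset \Omega$, the same upper bound governs $|A_i \cap \Omega_\delta|$. Writing $M := \#\{i : A_i \cap \Omega_\delta \neq \emptyset\}$ and decomposing across the partition,
\[
\delta|\Omega| \;\leq\; |\Omega_\delta| \;=\; \sum_i |A_i \cap \Omega_\delta| \;\leq\; M \cdot \frac{K|\Omega|}{N^{\A_g}(\Omega)}.
\]
Dividing and inserting the lower bound from Step 1 gives
\[
M \;\geq\; \frac{\delta}{K}\,N^{\A_g}(\Omega) \;\geq\; \frac{c\,\delta}{K}\, g\bigl(c_2 Fol_{A'}(k)\bigr),
\]
which is exactly the claimed lower bound after a harmless adjustment of $c_1$.

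\smallskip

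The only delicate point is Step 2: property (ii) is proved for intervals, and to control $|A_i \cap \Omega|$ globally (rather than per-component) one must rely on the extension of (ii) stated in the remark, where the bound for each connected component is summed. Once this is granted, the proof reduces to a direct two-stage counting argument, and no additional properties of the partition $\A_g$ beyond (i) and the extended (ii) are required.
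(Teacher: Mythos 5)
Your proof is correct and follows essentially the same route as the paper's: extract from the Folner hypothesis a connected component of size at least $Fol_{A'}(k)$, apply Lemma \ref{partition}(i) to it to get $N^{\A_g}(\Omega)\succeq g(Fol_{A'}(k))$, bound every $|A_i\cap\Omega|$ by $K|\Omega|/N^{\A_g}(\Omega)$ via the remark extending (ii) to finite sets, and conclude by counting classes meeting $\Omega_\delta$. The only differences are cosmetic: you locate the large component by counting the two boundary edges per component in $\Z$ instead of invoking the definition of the Folner function, and you argue by direct pigeonhole (thereby even avoiding the factor $2$ in $\frac{\delta}{2K}$) where the paper argues by contradiction.
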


  \begin{proof} $\ $ \\
 \begin{enumerate}
 \item  Let  $\Omega \subset \Z$ such that  
 $\frac{|\partial_{A'} \Omega|}{|\Omega|}\leq \frac{1}{k}.$  There 
 exists at least one connected component   $\Omega^{s_0}$ of  $\Omega$ 
 such that  
  $\frac{|\partial_{A'} \Omega^{s_0}|}{|\Omega^{s_0}|}\leq \frac{1}{k}$ 
  and so  $|\Omega^{s_0}| \geq Fol_A(k)$. 
 \item  Take for  $c_1 $ et $c_2$  the  constants verifying
 $ N^{\A_g}(k,k+m)\geq c_1g(c_2m), \ $ for all  $k$ in $\Z$  and  $m$
 in $\N$.
  \item There  exists $i_0$ such that  $0<|A_{i_0}\cap\Omega|\leq \frac{ |\Omega|}
  {c_1g(c_2 Fol_A(k))}.$\\
 Indeed, if for all $j$  such that $|A_j\cap\Omega|>0$ we had  
  $|A_j\cap\Omega| > \frac{|\Omega|}{c_1g(c_2  Fol_A(k))}$ then we would
  have had ,
  \begin{eqnarray*}
  |\Omega|&=& \sum_j |A_j\cap \Omega |\\
  &>&  N^{\A_g}( \Omega) \frac{|\Omega|}{c_1g(c_2 Fol_A(k))} \\
  &>& N^{\A_g}( \Omega^{s_0}) \frac{|\Omega|}{c_1g(c_2 Fol_A(k))}\\
  &>& |\Omega|  \ \ \ (\text{by the choice of  }\  c_1 \ \text{et} \ c_2.)\\
  & \text{Absurd.}&
  \end{eqnarray*}
 \item We deduce that for all  $i, \ |A_i\cap\Omega| \leq 
 \frac{K |\Omega|}{ c_1g(c_2Fol_A(k)  )}.$ \\
 Indeed,  by  remark \ref{theremark}, for all $i$  we can write :
   \begin{eqnarray*}
  |A_i\cap \Omega| = S_i(\Omega)\leq K S_{i_0}(\Omega) =K |A_{i_0}\cap \Omega|
  \leq  \frac{K |\Omega|}{ c_1g(c_2Fol_A(k) )}. 
  \end{eqnarray*}
  \item Assume now that 
   $\#\{ i;\ A_i\cap \Omega_{\delta}\neq \emptyset \}
 \leq c_1\frac{\delta}{2K} g(c_2 Fol_A(k)) .$  Then we  have  successively,
 \begin{eqnarray*}
  \delta|\Omega|&\leq & |\Omega_{\delta} | \\
  &=&  \underset{i;\ A_i\cap\Omega_{\delta} \neq \emptyset}{\sum}
   |A_i\cap \Omega_{\delta} |\\
  &\leq &   \#\{ i;\ A_i\cap \Omega_{\delta}\neq \emptyset \} \times
  \max_i |A_i\cap \Omega_{\delta}| \\
  &\leq &   \#\{ i;\ A_i\cap \Omega_{\delta}\neq \emptyset \} \times
  \max_i |A_i\cap \Omega| \\
  &\leq& c_1\frac{\delta}{2K} g(c_2 Fol_A(k) ) \times  
  \frac{K|\Omega|}{c_1g(c_2 Fol_A(k))  } = \frac{\delta |\Omega|}{2}.\\
  &\text{Absurd.}&  
  \end{eqnarray*}
\end{enumerate}
\end{proof}

 Take now  $g:x\rightarrow x^{\beta}$. Since  $\beta <1$,  assumptions of
  lemma  \ref{partition}  are satisfying. Let  $D_F=A\wr_{\A_g} B$, in
  the following lines we are going to prove that this 
 graph is solution of propostion \ref{existence}.
\subsubsection{proof of (i) of proposition
\ref{existence}} \label{ipropdur}$\ $ \\
$\bullet $\textbf{Upper bound of Folner function}\\
 Using  the sets  $U_n$ defined by  (\ref{U_n}), we get upper bound of
 Folner function. .
$$ Fol_{D_F} (n) \preceq |U_n| = (2n+1) 2^{ N^{\A_g}(-n,n)} \approx 
e^{n^{\beta}}.$$
$\bullet $\textbf{Lower bound of Folner function}\\
We get the  lower bound by the same ideas  as in the case $\alpha >1/3$,
but we have to improve the definition of satisfactory points.
Let  $\M$ a set of  part of  $V(A) $  and let 
$\epsilon >0$  and  $y>0$. Given $U\subset V(A\wr_{\A_g}B)$ and $f$ a
configuration of $U$, we say that the configuration $f$ is  $(1-\epsilon,y)_{\M}$
satisfactory if there  exists $M\in \M$ such that $M'\subset M$  and  
$(1-\epsilon)|M| \leq |M'|$, where  
$M'=\{a\in V(A);\ \underset{\phi(a)}{\dim} f \geq y \}$.\\
Then the proof falls into  $3$ steps.
 \begin{enumerate}
\item Let  $U\subset V(D_F )$ such that  
$\frac{|\partial_{D_F} U|}{|U|}\leq \frac{1}{k}  $.   (**)
\item For  $W\subset V( D_F )$, we call $W_c=\{f; \ \exists a \in V(A)
\ (a,f)\in W \}$.  By the same way as in the proof of propostion
\ref{existence} in the case $\alpha >1/3$, we prove that there exists
$\epsilon >0$ such that for all  $U$  verifying  (**),
  there exists  $W\subset U$ such that all $f$  of  $W_c$  is   
$(1-\epsilon, Fol_B(k)/30)_{\M}$ satisfactory, with  
$$\M=\{D\subset V(A);\  
\frac{|\partial_A D|}{|D|}\leq\frac{1}{k}\}.$$
This result is analogous to  lemma \ref{neuds} et
\ref{soushypergraphes}  is proved in the next section \ref{complement}.
\item  Take  now  $f\in W_c$,  there exists $M\in \M$ such that,
\begin{eqnarray*}
\begin{cases}
M'=\{ a\in V(A);\ \underset{\phi(a)}{\dim} f \geq Fol_B(k)/30 \} \subset M \\
\text{and} \\
|M'| \geq (1-\epsilon)|M| .
\end{cases}
 \end{eqnarray*}

Lemma \ref{noel} apply with $\delta=1-\epsilon, \ M=\Omega$ and 
$M'=\Omega_{\delta}$.  We deduce that for all $f$ in $W_c,$ we can change the value of the
configuration $f$  in at least  $ c_1\frac{1-\epsilon}{2K} g(c_2 
Fol_A(k))$ points in $Fol_B(k)/30$ ways by staying in  $W_c$. Then we conclude by the following lemma:

\begin{lem}
\label{choixsuccessifmain} Let  $Y >0 $ and   $X>0$. Let  $\A$ a non
empty set  of configurations, such that for all configurations of $\A$,
there exists at least   $Y$ points where we can change the value of the 
configuration in  $X$ way without leaving    $\A$.  Then  : $|\A| \geq X^Y.$\\
\\
ie:\\
 $ ( \forall f \in \A \ \exists a_1,a_2,...,a_Y \in A \ such \ that  \ \ g \in \A) 
 \Longrightarrow  |\A|\geq X^Y,$
\\
where  $g$  is defined from  $f$  by : 
 $g (x)=
\begin{cases}
 \;  f(x) & \text{if $x \neq a_{i_0}, $} \\
  \; X \ \text{possibilities } \  for  \  g(a_{i_0}) & \text{if $x=a_{i_0}.$  }
\end{cases}$
\end{lem}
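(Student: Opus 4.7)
The plan is to prove this by induction on $Y$, following the same scheme as in the proof of Lemma \ref{choixsuccessifsuite} but simplified by the fact that here the number of possible modifications $X$ does not depend on the location $a_i$.

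For the base case $Y=1$, pick any $f\in\A$. By hypothesis there exists a point where $f$ can be replaced, in $X$ different ways, by configurations that remain in $\A$; these $X$ configurations are distinct, so $|\A|\geq X=X^{1}$.

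For the induction step, assume the statement at rank $Y-1$ and fix $f_{0}\in\A$ with a list $a_{1},\ldots,a_{Y}$ of points where modifications are allowed. Look at the point $a_{1}$ and let $v_{1},\ldots,v_{X}$ be the $X$ allowed values at $a_{1}$. Define, for $i=1,\ldots,X$,
\[
\A_{i}=\{g\in\A\;;\;g(a_{1})=v_{i}\}.
\]
These sets are pairwise disjoint, and each $\A_{i}$ is non-empty (it contains the modification of $f_{0}$ with value $v_{i}$ at $a_{1}$). The key verification is that each $\A_{i}$ satisfies the hypothesis of the lemma with parameter $Y-1$: given $g\in\A_{i}$, the hypothesis applied to $g$ in $\A$ gives $Y$ admissible points, at most one of which can coincide with $a_{1}$, so at least $Y-1$ of them are different from $a_{1}$; at any such point $b\neq a_{1}$, each of the $X$ admissible modifications of $g$ stays in $\A$ and, since it leaves the value at $a_{1}$ unchanged, stays in $\A_{i}$. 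By the induction hypothesis, $|\A_{i}|\geq X^{Y-1}$, and summing,
\[
|\A|\geq\sum_{i=1}^{X}|\A_{i}|\geq X\cdot X^{Y-1}=X^{Y}.
\]

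The only delicate point in the argument is precisely the stability step: one must verify that the admissible-point structure inherited by each slice $\A_{i}$ is not destroyed when we freeze the coordinate $a_{1}$. This is however automatic because modifications at a point $b\neq a_{1}$ do not touch the value at $a_{1}$, so membership in the slice $\A_{i}$ is preserved. No other technical obstacle is present; the combinatorial content is exactly a slicing argument identical in spirit to the proof of Lemma \ref{choixsuccessifsuite}, with a constant (rather than position-dependent) number of choices $X$.
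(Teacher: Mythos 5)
Your proof is correct and follows essentially the same route as the paper: induction on $Y$, slicing $\A$ by the value at one admissible point into $X$ disjoint non-empty subsets $\A_i$, each of which inherits the hypothesis with parameter $Y-1$, and summing. Your explicit verification that each slice $\A_i$ still satisfies the hypothesis (since modifications at points $b\neq a_1$ do not change the value at $a_1$) is a point the paper leaves implicit, but the argument is the same.
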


\begin{proof}
We proceed by induction on   $Y$.\\
If $Y=1$, it is  exact.\\
Suppose $Y\geq 1$  and consider a point $x_0$  in the base such that
there exists  $X$ distinct configurations   $f_1,..., f_X \in A $ such
that  
 $\forall y \neq x_0 \ f_1(y)=f_2(y)=...=f_X(y) .$  \\
 For all   $i=1...X$,  let   $\ \A_i = \{f\in \A ; f(x_0)=f_i(x_0)  
 \}, $ which are not empty.\\
$\A=\underset{i=1...X}{ \dot{\bigcup} }A_i$  and the   $A_i$  satisfy
induction hypothesis with constant  $Y-1$. \\
So,  $|\A| = \underset{i=1...X} {\sum} |A_i| \geq X.X^{Y-1} =X^Y.$
\end{proof}

Finally, lemma  
\ref{choixsuccessifmain}   gives, $$|U|\geq |W_c| \geq 
(\frac{Fol_B(k)}{30} )^{ c_1' g(c_2 Fol_A(k) ) } \succeq e^{g(k)},$$
since first  $Fol_B(n)=2$ and secondly  $ Fol_A(k) = 2k$.
 
\end{enumerate}

\subsubsection{ proof of (ii) of proposition \ref{existence}}

We follow idea of   the case  $\alpha \geq 1/3$.\\
\begin{enumerate}
\item Let  $d_0=(0,f_0)$  where  $f_0$  is the
configuration which is null every where. Let  $X_n=(K_n,g_n)$  be 
the random walk on $D_F$ defined above.   $X$ starts
from $d_0 $ and jumps uniformly on the set of points
formed by the point where the walk is and its
neighbor.  On this generalized wreath product, this
walk is still reversible for the uniform measure
since the number of neighbor in $D_F$ is constant,
equal to  $4$. Now write: 
\begin{eqnarray*}
p^{D_F}_{2n}(d_0,d_0) &=& \underset{z}{\sum} p_n^{D_F}(d_0,z)
p_n^{D_F}(z,d_0)\\
&\geq& \underset{z \in G}{\sum}  p^{D_F}_n(d_0,z) ^2  \\
&\geq& \frac{1}{|G|}[ \underset{z \in G}{\sum} p^{D_F}_n(d_0,z)]^2\\
&\geq& \frac{1}{|G|}[  \P^{D_F}_{d_0} (X_n \in G)]^2,
\end{eqnarray*}
where  $G$  is some finite set of  $V(D_F).$\\ \\
\item Take  $G=G_r=\{  (a,f) \ ; \ | a| \leq r \ and \ supp(f)\subset 
\phi([|-r,r|]) \}. $ \\
By the  structure of edges on  $D_F$, we have : 
\begin{eqnarray*}
\P^{D_F}_{d_0}(X_n \in G_r) &\geq& \P^{D_F}_{d_0} 
( \forall i \in [|0,n|] \ |K_i|\leq r  ) \\
&\geq&  \P_0^K ( \forall i \in [|0,n|] \ |K_i|\leq r  ),
\end{eqnarray*}
where  $\P_0^K$ is the law of  $(K_i)_i$ which is still
a random walk with transitions probability which can
be represented by : \\
$$\includegraphics*[width=6cm]{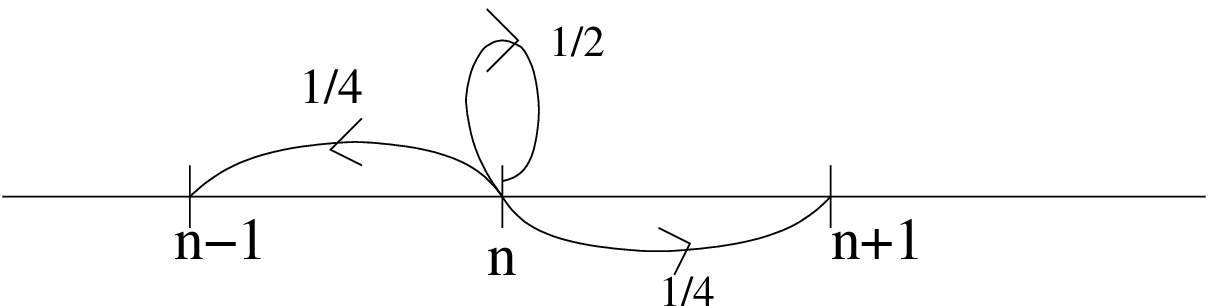}$$
\item  Now we  have to find a lower bound for $\P_0^K (
\forall i \in [|0,n|] \ |K_i|\leq r  )$.      It is
not sufficient to use  $\P_0^K (
\forall i \in [|0,n|] \ |K_i|\leq r  )  \geq 
e^{-c(n/r^2+r)}$ as in the case $\alpha>1/3$, because 
$\beta=\frac{2\alpha}{1-\alpha} <1$ ( see step D of
this proof).  However we can prove that : 
$$\exists c>0, \forall n\geq 0 \  \ \     
\P_0^K ( \forall i \in [|0,n|] \ |K_i|\leq r  ) \geq   e^{-cn/r^2}.$$
One can find this result in the lemma 7.4.3 of
\cite{SPbook}. It is known for a simple random walk
on  $\Z^d$ and we can deduce it in this particular
case with a coupling.
Consider  $K_i'$  which takes values in  $\Z^2$.
$K_i'$  
follows the  horizontal jumps of  $K_i$ if  $K_i$ 
moves and jumps uniformly on its 2 vertical neighbors
if  $K_i$  stays at its place.   On the first hand we
have 
$\{ \sup_{0\leq i \leq n} |K_i'| \leq r\}\subset
\{\sup_{0\leq i \leq n} |K_i| \leq r\}$  and on other
hand  $K_i'$  is a simple random walk on  $\Z^2$. 
(For $x=(a,b)\in \Z^2$, we note $|x|:= \max(a,b )$.)
Then the result for  $K_i$ in  
$\Z$  follows from the result for  $K_i'$ in $\Z^2$.
  \item We can end up the proof.  From  $|G_r|=
  (2r+1) 2^{N^{\A_g}(-r,r) } \preceq e^{ r^{\beta}}$,
  we deduce there  exists $c>0$ such that :
$$   p^{D_F}_{2n}(d_0,d_0)  \geq  e^{ -c( \frac{n}{r^2} +  r^ {\beta }   )}  .$$
But the  function  $r \mapsto \frac{n}{r^2}+r^{\beta
}$, is  minimal for  $r$ like $n^{\frac{1} {\beta+2} }$.\\
So there exists  $c>0$  such that:
$$p^{D_F}_{2n}(0,0) \geq e^{-c n^{\frac{\beta}{\beta+2} } }=e^{-cn^{\alpha}} .$$

\end{enumerate}

\subsection {Complement on satisfactory points}\label{complement}
In this section we improve the notion of satisfactory
point used in subsection \ref{d}  which is more
abstract  that the notion introduced in subsection
\ref{f}.  The reasons of this improvement will be
explain in the next.
 
We still consider a wreath product  $A\wr B$  of
two graphs $A$ and  $B$  or a generalized wreath
product $A\wr_{\A} B$ associated to some partition
$\A$.  We take    $U\subset V(A\wr B)$ and as before to each  $U$ we associate an hypergraph
$K_U$ and its one dimensional skelelton  $\Gamma_U$
with weight $w$,
built as the same way that in section \ref{f}.

Let  $\epsilon >0$ and  $a \geq 0$. Let  $\M$ a set
of parts of  $V(A)$.  To light the way of   this 
definition and to link it  with the old definition of
satisfactory points (section \ref{f}),
one can think to take for  $\M$  set of the form 
$\{ D\subset V(A); \   \frac{|\partial_A D|}{|D|} \leq
\frac{1}{k} \}$. 
\begin{enumerate} [$\bullet$]
\item  A configuration $f$ of $V(K_U)$ is said  $(1-\epsilon,a)_{\M}$
satisfactory if :
\begin{eqnarray} \label{newdef}
\text{there exists } \ M\in \M \ \text{such that }
\begin{cases}
 \;  M'\subset M \\
 \; \text{and}\\
\;  (1-\epsilon) |M| < |M'|
\end{cases} 
\end{eqnarray}
where  $ \ M'=\{m\in V(A);\ \underset{m}{dim} f \geq a \}.$\\
Once again, we denote by $S_U(1-\epsilon,a)_{\M}$ (or
$S(1-\epsilon,a)_{\M}$ ) the set of satisfactory
configurations.
\item Otherwise  $f$ is not satisfactory and we  note 
$NS(1-\epsilon,a)_{\M}$ ) the set of not satisfactory
configurations.
\item If $\Gamma'$  is a subgraph of  $\Gamma_U$,  we
say that  $f$  is $S(1-\epsilon,a)_{\M}$  in respect
to  $\Gamma'$  if  
$f$ satisfies the same condition  as in (\ref{newdef}) but  where 
dimension of  $f$  is counted only with edges in  $\Gamma'$.
More precisely :\\
$\underset{m,\Gamma'}{dim} f = 
 \#\{ g; \ (f,g)\in E(\Gamma') \ \text{and } \ 
  (x,g) \in U \ and \ \forall y \neq x \ f(y)=g(y)    \}  $.\\
\item  An edge of $\Gamma_U $ is said   $(1-\epsilon ,a)_{M}$ 
satisfactory if it joins two  
$(1-\epsilon,a)_{\M}$ satisfactory configurations,
otherwise it is said  $(1-\epsilon,a)_{\M}$ not 
satisfactory. As before we denote by
$S^e(1-\epsilon;a)_{\M}$ [resp $NS^e(1-\epsilon,a)_{\M}$]
the set of satifactory edges  
[resp not satisfactory ]. \\
\item  A point $u =(x,f)\in U$ is said  $(1-\epsilon,a)_{\M}$ 
satisfactory 
[resp $(1-\epsilon,a)_{\M}$ not satisfactory ] if  $f \in S(1-\epsilon,a)_{\M}$ 
[resp $ NS(1-\epsilon,a)_{\M} $]. We denote by 
$S^p(1-\epsilon,a)_{\M}$ and 
$NS^p(1-\epsilon,a)$  for the set of points which are
(or are not )  satisfactory.  
\\
\item We keep the same defintion for good points, 
$u=(x,f)\in U$  is said   $ a-good$ if  
$ \underset{x}{dim} f \geq a$ otherwise it is said  
$a-bad.$\\
\end{enumerate}

The interest of this new definition of satisfactory
points is the following. Consider a set  $U_c $  of 
$(\lambda,b)-satisfactory $ configurations. With the
"old" definition we know that we can change the value
of  $f$  in at least  $b$ points  in $\lambda_x$ 
ways (at point $x$)  without leaving  $U_c$ but we
do not know exactly where are these $b$ points 
whereas   with the "new" definition, for a set  
$U_c$  of   $S(1-\epsilon,a)_{\M}$ configurations, we
know  that  we can change the value of  $f$ in at
least 
 $(1-\epsilon) \underset{M\in \M}{\min} |M|$ points
 in $a$  ways without leaving $U_c$   et moreover we
 know that these points are contained     in some 
 $M\in\M$.  This would be useful for our generalized
 wreath products since this property concentrate  points
 where we can change value of $f$. By the properties  of
 partition, it remains only to get lower bound of $\#\phi(M)$.\\

Let  $U\in V(A\wr B)$ such that
$\frac{|\partial_{A\wr B} U|}{|U|} \leq \frac{1}{1000k}$, 
the two following lemmas are similar to  lemmas
\ref{neuds} et \ref{soushypergraphes}.
\begin{lem} \label{neudnew}
Let $\M=\{ D\subset V(A);  \  
\frac{|\partial_A D|}{|D|} \leq
\frac{1}{k} \}$ then we have :
\begin{enumerate} [$(i)$]
\item $\frac{ \#\{ u\in U; \ u \ is \ Fol_B(k)-
\text{bad } \}}
{ |U|} \leq \frac{1}{1000},$ \\
\item  there exists  $\epsilon >0$  such that 
$\ \frac{ \#\{u\in U;\ (1-\epsilon, 
Fol_B(k)/3)_{\M}-\text{not  satisfactory}      \}}{ |U|} \leq \frac{1}{500}. $
\end{enumerate}
\end{lem}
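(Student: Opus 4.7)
The plan is to mirror the proof of Lemma \ref{neuds}, modifying only the places where the new notion of satisfactoriness enters.

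Part (i) is a verbatim replay of Lemma \ref{neuds}(i) (with the fiber no longer depending on the base point). For each $Fol_B(k)$-bad point $(x,f)\in U$ we have $|\tilde P_{x,f}|<Fol_B(k)$, so the Folner property of $B$ gives $|\partial_B \tilde P_{x,f}|\geq |\tilde P_{x,f}|/k$. The disjoint injection $\bigsqcup_{(x,f)\text{ bad}}\partial_B\tilde P_{x,f}\hookrightarrow\partial_{A\wr B}U$ (the same one used in Lemma \ref{neuds}) then turns the hypothesis $|\partial_{A\wr B}U|\leq |U|/(1000k)$ into $\#\{\text{bad}\}\leq |U|/1000$.

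For part (ii) I follow the two-case decomposition of Lemma \ref{neuds}(ii). Fix a small $\epsilon>0$, to be chosen at the end. For each non-satisfactory configuration $f$ set $p(Neud(f))=\{x:(x,f)\in U\}$ and split such $f$ into $F_1\sqcup F_2$ according as $p(Neud(f))\notin\M$ or $\in\M$. Configurations in $F_1$ contribute base-boundary at least $|Neud(F_1)|/k$ to $\partial_{A\wr B}U$ via the injection $(x,y)\mapsto((x,f),(y,f))$ of Lemma \ref{neuds}(ii). For $f\in F_2$ I take $M=p(Neud(f))\in\M$ in the definition of non-satisfactoriness: with $M'=\{m:\dim_m f\geq Fol_B(k)/3\}$ and provisionally $M'\subset M$, non-satisfactoriness gives $|M'|<(1-\epsilon)|M|$, so $|M\setminus M'|\geq \epsilon|Neud(f)|$, and each $x$ in that complement produces at least $|\tilde P_{x,f}|/k\geq 1/k$ fiber-boundary edges in $\partial_{A\wr B}U$. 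Summing, $F_2$ contributes at least $\epsilon|Neud(F_2)|/k$. Adding the two bounds and invoking $|\partial_{A\wr B}U|\leq|U|/(1000k)$ yields $|Neud|/|U|\leq 1/500$ for $\epsilon$ small enough that the constants absorb.

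The main obstacle is the provisional inclusion $M'\subset p(Neud(f))$. A priori $M'$ may contain a point $m^*\notin p(Neud(f))$, corresponding to $(m^*,f)\notin U$ with many fiber-translates $(m^*,g)\in U$ agreeing with $f$ off $m^*$. I plan to handle this either by enlarging $M$ to $M\cup M'$ and verifying that its base-boundary ratio only worsens by a controlled constant, keeping $M\cup M'\in\M$ after relaxing the $1/k$ threshold (a slack that can be absorbed into $\epsilon$), or by accounting for such $m^*$ separately, noting that each already produces on its own at least $Fol_B(k)/3$ fiber-boundary edges of $\partial_{A\wr B}U$ (the edge from each $(m^*,g)\in U$ toward the missing $(m^*,f)$). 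This is precisely the flexibility that the new, more abstract definition of satisfactoriness is designed to accommodate, and it is the only non-cosmetic departure from the proof of Lemma \ref{neuds}.
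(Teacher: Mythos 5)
Your skeleton is exactly the paper's: part (i) verbatim, and for (ii) the same split of the non-satisfactory configurations into $F_1/F_2$ according to the isoperimetric ratio of $p(Neud(f))$, the base-boundary injection $(x,y)\mapsto((x,f),(y,f))$ for $F_1$, and the fiber-boundary harvest via the F\o lner function of $B$ for $F_2$. But your closing bookkeeping does not close. From "at least $|\tilde P_{x,f}|/k\geq 1/k$ boundary edges per low-dimension point" you only get $|\partial_{A\wr B}U|\geq \frac{\epsilon}{k}|Neud(F_2)|$, and combined with the $F_1$ bound and $|\partial_{A\wr B}U|\leq |U|/(1000k)$ this yields $|Neud|\leq |U|/(500\epsilon)$, which is never $\leq |U|/500$ for $\epsilon<1$. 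The paper instead uses integrality: $|\partial_B\tilde P_{x,f}|>\frac1k|\tilde P_{x,f}|>0$, hence $\geq 1$, so each low-dimension point of $p(Neud(f))$ contributes a whole boundary edge and $|\partial_{A\wr B}U|\geq\epsilon|Neud(F_2)|$; one then needs $\epsilon$ at least of order $1/k$ to match the $F_1$ term. In particular your "for $\epsilon$ small enough" goes in the wrong direction: shrinking $\epsilon$ both weakens the $F_2$ harvest and enlarges the set of non-satisfactory configurations.

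On the inclusion $M'\subset M$: the paper does none of the repair work you sketch; it simply chooses $M=p(Neud(f))$ and asserts $M'\subset M$, i.e.\ it only ever charges dimension at base points $m$ with $(m,f)\in U$ (and this restricted reading is all the later application uses, since there one only needs many points at which $f$ can be modified in many ways while staying in $W_c$). If you keep the literal reading, under which $\dim_{m^*}f$ can be large at some $m^*$ with $(m^*,f)\notin U$, then neither of your proposed fixes works as stated. Enlarging $M$ to $M\cup M'$ gives no control on $|\partial_A(M\cup M')|/|M\cup M'|$: the set $M'\setminus M$ may consist of many pairwise distant points, pushing the ratio to order $1$ independently of $k$, so no constant relaxation of the $1/k$ threshold in $\M$ absorbs it. And the claim that each such $m^*$ "already produces at least $Fol_B(k)/3$ fiber-boundary edges" is false for general $B$: the values $g(m^*)\in\tilde P_{m^*,f}$ need not be $B$-neighbors of $f(m^*)$ (this is special to $\Z/2\Z$); connectedness only guarantees one edge of $\partial_B\tilde P_{m^*,f}$. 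Worse, even granting many edges per defect, a per-$f$ bounded number of defect edges is not comparable to $|Neud(f)|$, so it cannot substitute for the needed bound $|\partial_{A\wr B}U|\gtrsim\epsilon|Neud(F_2)|$; to make that route work you would still have to prove that non-satisfactoriness of $f$ (with $p(Neud(f))\in\M$ and mostly high-dimensional) forces at least of order $\epsilon|Neud(f)|/k$ such defects, with the associated boundary edges counted injectively over $f$ --- and that quantitative argument is exactly what is missing. Either adopt the paper's restricted definition of $M'$ (which is what its one-line "$M'\subset M$" amounts to) or supply that count.
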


\begin{proof} $\ $ \\
For (i), it is the same argument that in part 
(i)  of  lemma \ref{neuds}.\\
For (ii) let,  
\begin{eqnarray*}Neud &=&\{  u \in U ;\  u \in
NS^p(1-\epsilon, \frac{Fol_B(k) } {3} )_{\M} \  \} \\
&=&\{ u =(x,f) \in U ; f \in \ NS(1-\epsilon,\frac{Fol_B(k)}{3}  )\},
\end{eqnarray*}
and let:  
$$Neud(f) = \{ (x,f) ; (x,f)\in U  \}.$$  Note
that   $p(Neud(f))=\{x; (x,f)\in U\} .$ \\
  For  $F$ a set of  configurations, let   
   $$Neud(F)= \underset{f\in F}{\cup} Neud(f).
   $$ Note the union is disjointed. \\
   \\
  Take  now $f \in NS(1-\epsilon,\frac{Fol_B(k) } {3})_{\M} $, 
    and consider the set  $p(Neud(f))$.\\
  Two cases appear. Either $p(Neud(f))$ gives
  a large part of boundary in  "base" either not
  and this case by assumptions on $f$ we
  will prove that $p(Neud(f))$ gives
   boundary in "configurations" \\

\underline{First  case} : $f\in F_1 := \{  f\in NS(1-\epsilon, 
\frac{Fol_B(k)}{3})_{\M} ;   
\frac{\#\partial_A \ p(Neud(f)) }{\# p(Neud(f))} > \frac{1}{k} \} .$\\

The application$ \underset{f\in
F_1}{\dot{\bigcup}}\partial_A \ p(Neud(f))
\longrightarrow  \partial_{A\wr B} U  \ $ is 
injective. \\
\hspace*{4.8cm} $       (x,y) \longmapsto \Bigl(\left(x,f\right);\left(y,f\right)  \Bigr)$ \\

So, we can write :
    \begin{equation} \label{cas1mainnew}
 |\partial_{A\wr B} U  | \geq 
 \underset{f\in F_1}{\sum} | \partial_A \ 
p(Neud( f) )| \geq  \frac{1}{k} \underset{f\in F_1}{\sum} |
p(Neud(f))|
\geq  \frac{1}{k}|Neud (F_1)|  . 
\end{equation}
\\

\underline{Second case} : $f\in F_2 := \{  f\in NS(1-\epsilon, 
\frac{Fol_B(k) } {3})_{\M}  ;  
 \frac{\#\partial_A \ p(Neud(f)) }{\# p(Neud(f))} \leq \frac{1}{k} \} .$\\
\\
Since $f\in NS(1-\epsilon,\frac{Fol_B(k) }
{3})_{\M}   $  we have   :
\begin{eqnarray} \label{M'}
\text{for all  } M \in \M \ \ 
\begin{cases}
\;  \exists\  m'\in M'-M,  \\
\; \text{or } \\
\; |M'| \leq (1-\epsilon) |M|,
\end{cases}
\end{eqnarray}
where $M'$  stands for  $\{m\in V(A);\ 
\underset{ m}{\dim}f \geq  \frac{Fol_B(k) } {3} \}.$\\
Choose $M=p(Neud(f))$ since  $f\in F_2$ we have 
$ M\in \M$ and  $M'\subset M$. So it is the
second item of assertion (\ref{M'})  which is
satisfied. ie :
 $|M'|\leq (1-\epsilon) |M|$. So,
$$ \# \{ x\in p(Neud(f)) ; \ \underset{x}{dim} f \geq \frac{Fol_B(k)}{3} \} < 
 (1-\epsilon) |M|=(1-\epsilon) |Neud(f)| .$$ 
 (We have used that   $|p(Neud(f)|=|Neud(f)|.$)\\

So $$ \# \{ x\in p(Neud(f)) ; \ \underset{x}{dim} f < \frac{ Fol_B(k)}{3} \}
 \geq  \epsilon |Neud(f)|     $$
   \begin{eqnarray} \label{P_f} ie :   |P_f| \geq \epsilon  |Neud(f)|,
   \end{eqnarray}
with   $P_f=\{ x\in p(Neud(f) ; \ \underset{x}{dim} f  < \frac{Fol_B(k)}{3}  \}. $ \\
To each point of  $P_f$ ( for  $f$  in  $F_2$), 
we can associate in an injective way a point of
the boundary  (in configuration ) of $U$.
Indeed, as before :\\
for $ x \in P_f $ and   $f \in Neud(F_2)$, we
have :
$$|\tilde{P}_{x,f}| \leq \frac{Fol_B(k)}{3} <  Fol_B(k).$$ 
where  $\tilde{P}_{x,f} =\{ g(x) ; \ (x,g) \in U
\ and \ \forall y \neq x \  g(y)=f(y)\}$.\\
 Thus,   $$ |\partial_B \tilde{P}_{x,f}| > \frac{1}{k } |\tilde{P}_{x,f}| \geq 0,$$ 
  and then  $$ |\partial_B \tilde{P}_{x,f}| \geq 1.$$ \\
   Finally, 
 \begin{eqnarray*}
|\partial_{A\wr B} U| &\geq &
\underset{x\in P_f, f\in F_2}{\sum}
|\partial_B \tilde{P}_{x,f}| \\
&\geq & \underset{f\in F_2}{\sum} \epsilon
|Neud(f)|     \hspace*{1cm} \text{by (\ref{P_f}}), \\
&\geq &  \epsilon |Neud(F_2)|\\
&\geq &  \frac{1}{k}  |Neud(F_2)|   \text{
by choising   $\epsilon < 1/k$. }
\end{eqnarray*}
By adding (\ref{cas1mainnew})  and this last
inequality and using the fact that  
$\frac{|\partial_{A\wr B} U| } {| U| } <
\frac{1}{1000k},$  we get : \\
$$ \frac{|Neud|}{|U|} < \frac{1}{500}. $$
\end{proof}

\begin{lem} \label{topchiantsoushyper}

Let  $\epsilon>0 $ and $x>0$. Consider  $ \Gamma_U $
 the one dimensional skeleton with weight $w$, 
 constructed from   $K_U$. 
Assume that   $E(\Gamma_U ) \neq \emptyset$  and  $\forall f \in K_U  \ 
\underset{x}{dim} f \geq a $ and  $\M$  does not contain the
empty set.  If we have :\\
$$ \frac{ \underset{e\in  NS_U^e( 1-\epsilon ,a )_{\M} }{\sum } w(e) }
{\underset{e\in E(\Gamma_U )}{\sum } w(e)}<1/2,
$$\\
then, there exists  a not empty subgraph   $\Gamma'$
of $\Gamma_U $  such that all edges are  $S_U( 1- \frac{9+\epsilon}{10},\frac{a}{10})_{\M}$ 
satisfactory in respect to  $\Gamma'$.
\end{lem}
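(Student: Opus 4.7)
The plan is to mimic, mutatis mutandis, the argument of Lemma \ref{soushypergraphes}. First I would define the iterative pruning process: set $U_0=U$ and, as long as the resulting skeleton is non-empty and contains points that are $NS^p_{U_i}(1-\tfrac{9+\epsilon}{10}, a/10)_{\M}$ (with respect to the skeleton at step $i$), remove all such points and the adjacent edges to obtain $U_{i+1}$. Proving the existence of the subgraph $\Gamma'$ amounts to proving this process terminates before the graph becomes empty. As in Lemma \ref{soushypergraphes}, I would orient each removed edge from the point removed first toward the point removed later (arbitrarily for ties), denote by $\underset{\downarrow}{L}$, $\underset{\uparrow}{L}$ the sets of edges leaving/entering $L$ at step $0$, and split the total weight of removed edges as $C_0 \leq C_1 + C_2$, where $C_1$ is the weight of edges that are initially $NS^e_U(1-\epsilon,a)_{\M}$ and $C_2$ the weight of those that are $S^e_U(1-\epsilon,a)_{\M}$ but still get removed. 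The hypothesis then gives $C_1 < \tfrac12 \sum_{E(\Gamma_U)} w$, so it suffices to show $C_2 \leq C_1$.

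The heart of the proof is the analogue of Sublemma \ref{soustrucchiantsmain}: if $L$ is initially $S^p_U(1-\epsilon,a)_{\M}$ satisfactory and removed at step $k+1$, then
$$\underset{e\in\underset{\downarrow}{L}}{\sum} w(e) \;\leq\; \tfrac12 \underset{e\in\underset{\uparrow}{L}}{\sum} w(e).$$
By hypothesis there is some $M\in\M$ with $M'\subset M$ and $(1-\epsilon)|M|\leq |M'|$, where $M'=\{m;\ \underset{m}{\dim}f\geq a\}$. Let $N_0$ denote the number of multidimensional edges in $K_U$ incident to $L$ at step $0$, so $N_0\geq |M'|\geq (1-\epsilon)|M|$ and the total weight of edges incident to $L$ at step $0$ equals $N_0$. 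Since $L$ becomes $NS^p_{U_k}(\tfrac{1-\epsilon}{10}, a/10)_{\M}$ non-satisfactory, applying the negation of (\ref{newdef}) to the same $M$ (noting $M''\subset M'\subset M$, where $M''=\{m;\ \underset{m,\Gamma_k}{\dim}f\geq a/10\}$) forces $|M''| < \tfrac{1-\epsilon}{10}|M|$.

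The key counting then splits $\underset{\downarrow}{L}$ into edges coming from a multidimensional edge of $K_{U_k}$ associated to some $m\in M''$ (their total weight is bounded by $|M''|\cdot 1 < \tfrac{1-\epsilon}{10}|M| \leq \tfrac{1}{10}N_0$) and those associated to a point $m\notin M''$, for which each multidimensional edge contributes weight at most $\tfrac{1}{a}\cdot\tfrac{a}{10} = \tfrac{1}{10}$, giving total at most $\tfrac{1}{10}N_0$. Together, $\sum_{\underset{\downarrow}{L}}w(e)\leq \tfrac{2}{10}N_0$ and $\sum_{\underset{\uparrow}{L}}w(e)\geq \tfrac{8}{10}N_0$, which proves the sublemma. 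To conclude, I would repeat verbatim the bookkeeping at the end of Lemma \ref{soushypergraphes}: define $D_i$, $F_i$, $F_i'$, apply the sublemma at each initially-satisfactory point of $D_i$ to get $\sum_{F_{i+1}'}w\leq \tfrac12\sum_{F_i}w$, sum the geometric series to obtain $C_2 \leq \sum_{F_2}w \leq C_1$ (an edge of $F_2$ must be $NS^e_U(1-\epsilon,a)_{\M}$, otherwise both its endpoints would have remained satisfactory enough to survive the first removal), and conclude that $\Gamma'$ is non-empty. The main obstacle is the sublemma itself, and specifically the observation that one may reuse the same $M$ witnessing initial satisfactoriness to extract the bound $|M''|<\tfrac{1-\epsilon}{10}|M|$ from the failure of the new satisfactoriness condition.
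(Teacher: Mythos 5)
Your proposal is correct and follows essentially the same route as the paper: the same pruning process, the same orientation and $C_0,C_1,C_2$ bookkeeping with the sets $D_i,F_i,F_i'$, and the same sublemma whose key point is reusing the witness $M_0$ of the initial $(1-\epsilon,a)_{\M}$-satisfactoriness in the negated condition at step $k$ (noting $M''\subset M_0$), together with the bound $w(e)\leq 1/a$ and the count of at most $a/10$ skeleton edges per low-dimensional multidimensional edge. The only difference is cosmetic: you bound the outgoing weight by $\tfrac{2}{10}N_0$ instead of the paper's $\tfrac{19}{100}N_0$, which still yields the needed factor $\leq \tfrac12$.
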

\begin{proof}

In the graph  $\Gamma_U$,  we remove all points   
$NS^p_U( 1- \frac{9+\epsilon}{10},\frac{a}{10})_{\M}$
and adjacents edges. After this first step, it may
appear  some new points 
$NS^P_{U_1}( 1- \frac{9+\epsilon}{10},\frac{a}{10})_{\M}$, 
where  $U_1=U- NS^p_U( 1- \frac{9+\epsilon}{10},\frac{a}{10})_{\M}  $. \\
We remove  again all adjacent edges and points and
we iterate this process.\\
Let  $U_i$ the set of vertices staying at step  $i$. 
$$\left \lbrace
\begin{array}{l}
U_0=U,\\
\mathrm{for} \ i\geq 1 \ \ U_{i+1}=U_i -NS_{U_i}^p
( 1- \frac{9+\epsilon}{10},\frac{a}{10})_{\M}.
\end{array}
\right.$$
it is sufficient to prove that this  process ends up
before the graph becomes empty.\\
Let $C_1= \underset{e\in NS_U( 1-\epsilon,a)_{\M}}{\sum }w(e) $ ,
$\ \ \ C_2=   \underset{ 
\underset{ \text{ at the end of the process} }
{e\in S_U^e(1-\epsilon,a)_{\M} ; e \text{ removed } }  
}{\sum } w(e) ,$ \\
 and
  $$ C_0= \underset{
\underset{\text{ at the end of the process}  }{e\in
E(\Gamma_U) ) ; e \text{ removed  } }
} 
 {\sum }
 w(e)    . $$
If we show that $ C_2 \leq C_1$,  the result is
proved since: $$C_0\leq C_1 +C_2 \leq 2 C_1<
  \underset{e\in E(\Gamma_U ) }{\sum} w(e).$$ \\
That would mean that it stays at least one point not
removed. 
 ie: $\exists k_0\in \N  $  such that all points of
 the graph get at step   $k$,  are  
 $S^p_{U_{k_0}}( 1-
 \frac{9+\epsilon}{10},\frac{a}{10})_{\M} $,  so  
 $ S^p_U( 1- \frac{9+\epsilon}{10},\frac{a}{10})_{\M}.$    \\
 \\
To see this, let us introduce an  orientation on
removed edges :  if  
$L$ and  $Q$  are points of the graph,  we   orient
the edge from   $L$  to  $Q$ if  $L$  s removed
before  $Q$ otherwise  we choose an arbitrary
orientation.  We note   
$\underset{\downarrow}{L}$  the set of edges leaving
the  point $L$ and $\underset{\uparrow}{L}$  for the
set of edge ending in $L$ at step  $0$.

\begin{souslem}
\label{soustrucchiantsmain} Let  $k \in \N$  and let
$L $  be a  point of the   graph 
$\Gamma_U$ (satisfying assumptions of lemma \ref{topchiantsoushyper}) 
removed after  $k+1$ steps.
Assume that   $L$  is initially $ S^p_{U}( 1- \epsilon ,a)_{\M}
$, then
$$ \underset{e \in \underset{\downarrow}{L} }{\sum} w(e)   \leq \frac{1}{2} \underset{e \in  
\underset{\uparrow}{L}}{\sum} w(e)   .$$
\end{souslem}

\begin{proof}
It would be useful to notice that  for a multidimensional edge  $e$, the
sum of the weight (in the skeleton)  of edges coming from  $e$ and
adjacent to a point, is equal to $ 1$. This is implied by our choice of
the weight. \\
\begin{enumerate}
\item
Let now  $N_0$ be the number of   multidimensional
edges at step 
  $0$.  Since   $L$ is  initially 
  $S^p_U(1-\epsilon  ,a)_{\M},\ $  there exists 
  $M_0\in\M$ such that  
   $(1-\epsilon)|M_0|$    multidimensional edges are
   attached to   $L$. So,
\begin{eqnarray} \label{N_0n} N_0 \geq (1-\epsilon )|M_0| .
\end{eqnarray}
Besides notice that  :
$$  \underset{ \underset {e \ contains  \ L}{ e\in E(\Gamma_U ) } }{\sum} w(e)  =N_0 .$$
Let  :\\
$ \underset{\downarrow}{L_1}  = \{ e\in     
\underset{\downarrow}{L}, \ e \text{ coming from a  
multidimensionnal edge  of } \ K_{U_k} ,\\
\hspace*{9cm}  \text{of} \ dim\geq a/10   \}, $  \\
and\\
$ \underset{\downarrow}{L_2}  = \{ e\in     
\underset{\downarrow}{L}, \ e \text{ coming from a 
multidimensionnal edge} \ K_{U_k} ,\\
\hspace*{9cm} \text{of} \ dim < a/10   \}. $ \\

We have $\underset{\downarrow}{L} = \underset{\downarrow}{L_1} \cup 
\underset{\downarrow}{L_2} $, because edges of   $  
\underset{\downarrow}{L}$ correspond to  edges leaving  
$L$ at step $k$.
\\
\item Since  $L$ becomes   $NS^p_{U_k}( 1- \frac{9+\epsilon}{10},\frac{a}{10} )_{\M} $,
we have:
\begin{eqnarray*}
\text{for all  } \ M \ \text{in } \ \M
\begin{cases}
M''\not\subset M \\
\text{or} \\
|M''|\leq ( 1- \frac{9+\epsilon}{10}) |M|
\end{cases}
\end{eqnarray*} 
where $M''=\{ m\in V(A);\ \underset{m,U_k}{dim}  L \geq \frac{a}{10}  \}$.\\
\\
Take $M=M_0$, observe that  $M''\subset M_0$   so 
that implies   
$|M''|\leq ( 1- \frac{9+\epsilon}{10}) |M_0|$.
Finally   $L$ has less than   
$ (1- \frac{9+\epsilon}{10}) |M_0| $  
multidimensional edges of dimension at least 
$a/10$. call them  $f_1, ...,f_{q}, $ with  
$q < (1- \frac{9+\epsilon}{10}) |M_0|   .$    
\begin{eqnarray} \label{poid1n}\underset{e\in  \underset{\downarrow}{L_1} }{\sum} w(e)  = 
 \underset{k=1..q}{\sum} \ \ 
    \underbrace {\underset{\underset{ coming  \ from \  f_{k}}{e} }{\sum} w(e) }_{\leq 1} \leq q  .
    \end{eqnarray}
    (Initially this last sum was equal to  $1$, but
    after removing some edges, this sum is less than
    $1$.)\\
    \\
 Besides, call  $g_1,...,g_{h}$ the other 
 multidimensional edges of  dimension strictly less
 than  $a/10,$ attached to  $L$ at step  $k$,  with  
 $h\leq N_0-q$.\\
 For all   $k=1...h,$
 \begin{eqnarray} \label{poid2n}
 \underset{\underset{coming \ from \   g_{k}}{e} }{\sum} w(e)  \leq 
 \frac{1}{a} \frac{a}{10} \leq \frac{1}{10} . 
 \end{eqnarray}  
 (Indeed, first all point have initially  dimension
 at least   $a$ so we deduce  
  $\forall e \in E( \Gamma_{U_i} )  \ 
 w(e) \leq 1/a$ and secondly an edge of dimension
 less than $a/10$  gives less than  $a/10$
edges attached to one point, in the skeleton. ) \\
 \\ 
\item Finally with  (\ref{poid1n}) and 
(\ref{poid2n}), we get :
\begin{eqnarray*} 
\underset{e\in  \underset{\downarrow}{L} }{\sum} w(e)  &= &
\underset{e\in  \underset{\downarrow}{L_1} }{\sum} w(e)  +
\underset{e\in  \underset{\downarrow}{L_2} }{\sum} w(e) \\
&\leq&  q + (N_0-  q   )\frac{ 1}{10} \\
&=&  \frac{1}{10} N_0+ \frac{9}{10} q\\
&=& \frac{19}{100} N_0.
\end{eqnarray*}
(
$q <  ( 1- \frac{9+\epsilon}{10}) |M_0| \leq  N_0 \  
\frac{1- \frac{9+\epsilon}{10}}{1-\epsilon}
\leq \frac{N_0}{10} \ \ \ $  by  (\ref{N_0n}).)\\
\\
So,
$$ \underset{e\in  \underset{\downarrow}{L} }{\sum} w(e)  \leq \frac{19}{100} N_0  \ \ et\ \
 \underset{e\in \underset{\uparrow}{L}}{\sum} w(e)  \geq  N_0 - \frac{19}{100}N_0=\frac{81}{100}N_0    .$$
And then,  
$$
\underset{e \in \underset{\downarrow}{L} }{\sum} w(e)   \leq \frac{19}{81}\underset{e \in  
\underset{\uparrow}{L}}{\sum} w(e) 
\leq \frac{1}{2}  \underset{e \in  
\underset{\uparrow}{L}}{\sum} w(e) .
$$

\end{enumerate}
\end{proof}

The proof ends up by the same way as proposition
\ref{soushypergraphes}, let:\\
$D_1=\{ \text{vertices   \ removed \ at  \ step} \  1 \},$ \\
and for   $i\geq 2$ \\
$D_i=\{ \text{vertices } \ S^p_{U}(1-\epsilon,a) \
\text{removed \ at  \ step}\  i \},$ \\
$F_i= \{ \text{edges  \ between }\ D_i \ and \ D_{i-1}  \},$ \\
$ F_i'= \{ \text{edges \ leaving  }\ D_{i-1}  \}     $.\\
\\
Notice that $F_i \subset F_i'$ and that edges of   
$F_i'$  are removed.\\
\includegraphics*[width=11cm]{flot.eps}
By sublemma  \ref{soustrucchiantsmain}  applied in
each point of  $D_i$ in the graph get at step   $i-2$.
(Each  point of   $D_i$  is at this moment, at least
$S(1- \epsilon ,a)_{\M}$.) We get   :
$$\forall i \geq 2 \ \  \underset{e\in F_{i+1}'}{\sum} w(e)  \leq \frac{1}{2} \underset{e\in F_i}{\sum}w(e) . $$

so, $$ \ \  \underset{e\in F_{i+1}'}{\sum} w(e) \leq (\frac{1}{2})^{i-1}  \underset{e\in F_2}{\sum} w(e) . $$
(We have used that   $F_i \subset F_i'.$)\\
Thus,
\begin{eqnarray*}  \ \underset {e\in   \underset{i\geq 3}{\cup} F_i'}{\sum} w(e)   &\leq&
(\underset{i\geq 1} {\sum} (\frac{1}{2})^i)  \underset{e\in  F_2}{\sum} w(e) \\
&=&  \underset{e\in  F_2}{\sum} w(e) .
\end{eqnarray*}
Now, an edge of   $F_2$ is  $NS_U^e( 1- \epsilon,a)$
 because if this edge  was  $S_U^e(1-\epsilon,a)$,
 this edge would have linked two  points 
$S_U^p(1-\epsilon,a)$ and in particular,    points 
of  $D_1$  would have been   $S_U^p(1-\epsilon,a)$,
so $S_U^p(1- \frac{9+\epsilon}{10} ,a/10)$ 
 and so  would not have   removed.
Thus :
 $$\underset{e\in F_2}{\sum} w(e)  \leq \underset{e\in NS^e(a,b)}{\sum} w(e)=C_1  .$$
Besides, all removed edge  $S_U^e(1-\epsilon,a)$  is
in some  $F_i'$ with  $i\geq 3$,  so 
$$C_2=\underset{ 
\underset{e \in S_U^e(1-\epsilon,a) }
{ e \text{ \ removed \ at  \ the   \ of  \ the \    process} }}
{\sum}
w(e)      
\leq  \underset{e\in \underset{i\geq 3}{\cup} F_i'}{\sum} w(e) .$$ \\
That  ends  the proof.

\end{proof}

Now we are able to explain the fact that we used in
section  \ref{ipropdur} in order to  prove  the
lower bound of  $Fol_{D_F}$.
We recall that  $U\subset V(D_F)$  is such that 
$\frac{|\partial_{A\wr B} U|}{|U|}\leq\frac{1}{k}.$
Let  $\tilde{K}$ be the sub hypergraph of $K_U$ which
contains only $Fol_B(k)/3-$good points. As in the
proof of (i) in the case $\alpha>1/3$ of propostion
\ref{existence}, we prove by using
lemma \ref{neudnew} that there exists $\theta<1/2$
such that, 
$$ \frac{ \underset{
\underset{ e\in E(\Gamma_{\tilde{K} } ) }
{
e\in  NS_U^e( 1-\epsilon ,Fol_B(k)/3 )_{\M} 
}
}{\sum } w(e) }
{\underset{e\in E(\Gamma_{\tilde{K} })}{\sum }
w(e)}<\theta,$$  for some  
$\epsilon>0$  and $\M=\{D\subset V(A);\
\frac{|\partial_A D|}{|D|}\leq\frac{1}{k}\}$.\\
Lemma \ref{topchiantsoushyper} gives us a sub  graph
where all edges are  $S^e(1-\delta, Fol_B(k)
/30)_{\M}$ for 
$\delta= 1-\frac{9+\epsilon}{10}$. By definition of
satisfactory points, this proves the
fact that we have used.

\section{Applications: study of  some functionals}
\label{appli}

\subsection{Kind of problems, case of  the lattice $\Z^d$}
Recall that  for $G$  a graph and $X$ is a simple random walk on
$G$, we note  $ L_{x,n}=\#\{k\in[0;n];\ X_k=x\}$. The question is to
estimate
functional of type 
\begin{eqnarray}\label{functional}E_0^{\omega} (
e^{
- \lambda \underset{z; L_{z;n}>0  } {\sum}   F(L_{z;n},z)  
} \ ),  \end{eqnarray}
where $F$ is a two variables  non negative  function. The method 
developped here is due to Erschler and can be applied on general
graph $G$ provided the  isoperimetric profile on the graph $G$ is known
and the  function $F$ has some "good" properties.  
For the case of the   simple random walk on $\Z^d$, in
\cite{ershdur} it is proved that 
\begin{eqnarray} \label{ext1Z}\forall \alpha \in [0,1] \ \ \  \E_0^{\omega} (
e^{
- \lambda \underset{z; L_{z;n}>0  } {\sum} L_{z;n}^{\alpha} 
} ) \approx  e^{-n^{\eta}},   \end{eqnarray} 
\begin{eqnarray} \label{ext2Z}     \forall \alpha \geq 1/2 \ \
\E_0^{\omega}( \underset{ z;L_{z;n} >0}{\prod }  L_{z;n}^{-\alpha}   ) 
 \approx  e^{-n^{\frac{d}{d+2}} 
 ln(n)^{\frac{2}{d+2} }},\end{eqnarray}
where  $\eta= \frac{ d+\alpha (2-d) }{ 2+d(1-\alpha) }.$ This
section is devoted to extend these estimates to an infinte cluster
of the percolation model. 
\subsection{In an infinite cluster of the percolation model}
\subsubsection{Percolation context } $\ $ \\
Consider the graph $\L^d=(\Z^d,E_d)$ where $E_d$ are the couple of
points of $\Z^d$ at distance $1$ for the $N_1$ norm. Now pick a
number $p\in ]0,1[$. Each edge is kept  [resp
removed ] with  probability $p$  [resp $1-p$]  in an independant way.  
We get a graph $\omega$ and we call $\C $ the connected component
that contains the origin and $\C_n$ the connected component of
$\C\cap [-n,n]^d$ that contains the origin. 

We still use the notation $\omega$ for  the application  $E_d\rightarrow \{0,1\}$
such that $\omega(e)= 0$ if $e$ is a  removed edge and $1$ 
otherwise.  Let $Q$ be the  probability measure under which the
variable $(\omega(e), e\in E_d) $ are  Bernouilli(p) independent variables. If  $p$  is larger than some critical value $p_c$, the 
$Q$ probability that
$\C$ is infinite, is strictly positive  and so we can
work on the event  $\{\#\C=+\infty\}$.\\

We denote  by $\C^g$ the graph such that $V(\C^g)=\C$ and $E(\C^g)=\{
(x,y)\in E_d;\ \omega(x,y)=1 \}$ and $\C_n^g$ the graph such that
$V(\C_n^g)=\C$ and $E(\C_n^g)=\{
(x,y)\in E_d;\ x,y\in \C_n\text{ and } \omega(x,y)=1  \}$

From now on and until the end, $X$ will design the simple random
walk on the graph $\C^g$. We are going to prove estimate
(\ref{ext1Z}) and (\ref{ext2Z}) for the walk $X$.
\subsubsection{Sketch plan} $\ $ \\

Let   $(B_x )_{x\in \C}$  be a  family of  graphs and let  $0_x$ 
an arbitrary point in each  $B_x$  that we  call the  origin. 
For all $x\in \C$, consider the random walk  $(Y^x_n)_n$ on 
$B_x$  starting from point  $0_x$,   and jumping uniformly on
the set of points formed by the point where the walk is and its
neighbors.  Let  $\P_{0_x}^{B_x}$ be the law of  $(Y^x_n)_n$.\\
Transition kernels of  $Y^x$  satisfy : 
$$p^{B_x}(a,b) = \frac{ 1}{\nu_x(a)+1} (    1_{\{ a=b\}} +1_{\{ (a,b)\in
E(B_x)\}} ),$$
where  $\nu_x(a) $ stands for the number of neighbors of  $a$
in graph $B_x$.\\
\\
Consider now the graph  \begin{eqnarray} \label{W}W=W_{\C}=\C^g
\wr (B_z)_{z\in\C}. \end{eqnarray}
Let $f_0 $  be the nulle configuration, such that , for all  $x\in
\C$, $f_0(x)=0_x$, and let $o=(0,f_0)$.\
And we look at the  random walk  $( Z_n)_n$ on the graph 
$W_{\C}$ starting from $o$,  defined by the following:
suppose that the walk is at point  $z=(x,f)$, then in one unit
of time the walk makes three independent steps. First, 
the value of $f$ at point $x$   jumps in graph $B_x$ in respect
to the walk  $Y^x$ starting from   $f(x)$. Secondly,  we make
the  walker in $\C$ jump on his neighbors in respect to
uniformly law on his neighbor, so the walker in $\C$
(projection on $\C$ of walk on $W_{\C}$) arrives at  point $
y\in \C$. And thirdly, the value of $f$ at point $y$   jumps in
graph $B_y$ in respect
to the walk  $Y^y$ starting from   $f(y)$. 
\\
Thus, calling  $\tilde{p}$  transitions kernel of  $Z$, we
have: \\
for all  $((a,f);(b,g))\in (V(\C^g\wr B_z))^2$:
\begin{eqnarray} \label{ptilde}
\tilde{p}[(a,f)(b,g) ]=   \frac{ \chi[(a,f),(b,g)]  }
{ \nu(a)\  [\nu_a(f(a)) +1]\  [\nu_b(f(b)) +1] }, 
\end{eqnarray}
 where $\chi[(a,f),(b,g)]$  is equal to  $1 $ if the walk is
 able to jump from  $(a,f)$ to $(b,g)$ and $ 0$ otherwise.\\
More  precisely, 
\begin{eqnarray*}
 \chi[(a,f),(b,g)]=\omega(a,b)\ (\chi_1[(a,f),(b,g)] &+&\chi_2[(a,f),(b,g)]\\
&+& \chi_3[(a,f),(b,g)]+\chi_4[(a,f),(b,g)]),
\end{eqnarray*}
with\\
$\chi_1[(a,f),(b,g)]= 1_{\{\forall x \ f(x)=g(x)\}},\qquad \ \ 
\chi_2[(a,f),(b,g)]= 1_{\{   \underset
{ \forall x\neq a \ f(x)=g(x)}{  \ (f(a) ,g(a)) \in  
E(B_a) }    \}},$ \\
$ \chi_3[(a,f),(b,g)] =1_{\{   \underset
{ \forall x\neq b \ f(x)=g(x)}{  \ (f(b) ,g(b) ) \in  
E(B_b) }    \}} ,\ \ \  
\chi_4[(a,f),(b,g)]= 
1_{\{   \underset
{ \forall x\neq a,b \ f(x)=g(x)}{ \forall  x\in\{a,b\} \ (f(x) ,g(x)) \in  
E(B_x) }    \}}.$ \\
\\
Notice that $\tilde{m}$ defined by,
 \begin{eqnarray} \label{mtilde}
 \tilde{m}(a,f) = \nu(a),
 \end{eqnarray}
is  a reversible measure for the walk $Z$.  We note 
$\tilde{a}$  the following kernels:
 \begin{eqnarray}
 \label{atilde} \tilde{a}(x,y) &= &\tilde{m}(x) \tilde{p}(x,y)
    \end{eqnarray}
 Let  $\tilde{\P}_o^{\omega}$ be the law of $Z$ starting from  $o.$
The key for our  problem is the following interpretation of the
return probability of $Z$: 
\begin{prop} \label{fait0}
$$\tilde{\P}^{\omega}_{o} (Z_{n} =o) =\mathbb{E}_0^{\omega} 
			 ( \underset {x; L_{x;n}>0 }{\prod} \P_{0_x}^{B_x}
			 ( Y^x_{L_{x;n} }  =0_x) 
			 \ \        
1_{       \{  X_{n}=0 \}        }         ).$$
\end{prop}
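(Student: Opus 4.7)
The plan is to decompose the return probability of $Z$ by first projecting it onto its base coordinate, then conditioning on that base trajectory; the statement then reduces to a conditional factorization of the fiber return probabilities over the sites visited.

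I would first rewrite the kernel (\ref{ptilde}) in the factored form
\[
\tilde{p}((a,f),(b,g))=\frac{\omega(a,b)}{\nu(a)}\,p^{B_a}(f(a),g(a))\,p^{B_b}(f(b),g(b))\prod_{x\neq a,b} 1_{\{f(x)=g(x)\}},
\]
which is immediate from the definitions of the $\chi_i$. Summing over all configurations $g$ makes the two $p^{B_\cdot}$ factors collapse to $1$, leaving the kernel $\omega(a,b)/\nu(a)$ of the simple random walk $X$ on $\C^g$. Thus the first coordinate $K_n$ of $Z_n$ has the same law as $X$ started at $0$.

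Next I would expand $\tilde{\P}^{\omega}_{o}(Z_n=o)$ as a sum over admissible base trajectories $0=X_0,X_1,\dots,X_n=0$ of the base path probability $\prod_{k=1}^{n}\nu(X_{k-1})^{-1}$ times the conditional probability that $F_n=f_0$. Applying the factored form of $\tilde{p}$ at each step, the fiber part of step $k$ modifies only the coordinates $F(X_{k-1})$ and $F(X_k)$, via independent one-step transitions of the corresponding lazy walks $Y$. Since updates at distinct spatial sites never interact, the family of processes $(F_k(x))_k$ is mutually independent across $x$ conditionally on the trajectory $X$, and the process at any fixed site $x$ is the lazy walk $Y^x$ started from $0_x$, run for the number of $Y^x$-transitions that the trajectory triggers at $x$. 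A careful attribution of transitions to the visits of $X$ identifies that number with $L_{x,n}$, yielding
\[
\tilde{\P}^{\omega}_{o}(F_n=f_0\mid X)=\prod_{x\,:\,L_{x,n}>0}\P^{B_x}_{0_x}(Y^x_{L_{x,n}}=0_x).
\]
Taking the expectation with respect to the law of $X$ started at $0$ and using $\{Z_n=o\}=\{X_n=0\}\cap\{F_n=f_0\}$ reassembles the right-hand side of the proposition.

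The main obstacle is the bookkeeping step that matches the number of $Y^x$-transitions at a site to the local time $L_{x,n}$. The three-substep description of $Z$ in a naive count attributes two updates per interior visit and one per boundary visit, so one has to be careful in grouping the "arrival" and "departure" updates within each visit (or, equivalently, to choose the correct convention for initializing and terminating the visit counter) so that every visit of $X$ to $x$ corresponds to exactly one effective step of $Y^x$ in the joint process. Once this point is settled, the rest of the argument is a standard conditional-factorization identity for wreath-product walks.
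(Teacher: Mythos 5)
You follow the same route as the paper's proof: expand $\tilde{\P}^{\omega}_{o}(Z_n=o)$ over base trajectories, observe that the projection of $Z$ onto $\C$ has the law of $X$, and use conditional independence of the fibre coordinates given the trajectory together with Chapman--Kolmogorov at each site. Your preliminary factorisation of the kernel $\tilde{p}$ (which is correct) and your explicit statement of the conditional independence are simply a more detailed write-up of the single chain of equalities that constitutes the paper's proof of Proposition \ref{fait0}.

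However, the point you yourself flag as the main obstacle is not settled by the regrouping you propose, and it is exactly where the literal identity is at stake. Under the switch--walk--switch dynamics, conditionally on $(X_0,\dots,X_n)$ the value $f_n(x)$ is distributed as $Y^x$ run for exactly $m_x$ steps, where $m_x=\#\{k\in[0,n-1];\ X_k=x\}+\#\{k\in[1,n];\ X_k=x\}$; no convention for attributing arrival and departure updates to visits can change the number of steps of the chain $Y^x$, only the bookkeeping of which visit they are charged to. On the event $\{X_0=X_n=0\}$ this gives $m_x=2L_{x,n}$ for $x\neq 0$ and $m_0=2L_{0,n}-2$, not $L_{x,n}$, so the conditional factorisation is exact with these counts, and your sentence ``a careful attribution identifies that number with $L_{x,n}$'' is the one step of your argument that would not survive being written out. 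The paper's proof makes the same silent identification; the discrepancy is harmless for Theorem \ref{ext}, since only the order of $\P^{B_x}_{0_x}(Y^x_{m}=0_x)$ in $m$ up to the time-change constants hidden in $\approx$ enters the later estimates, but to obtain a true equality you should either prove the proposition with $m_x$ in place of $L_{x,n}$ (your argument then goes through verbatim, with the count made explicit) or state the identity only up to $\approx$.
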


\begin{proof}: 
\begin{eqnarray*}
\tilde{\P}^{\omega}_{o} (Z_{n} =o) &=& \tilde{\P}^{\omega}_{o} \Bigl(  (X_{n},
f_{n} )=(0,f_0)   \Bigr) \\
                           &=&\underset{ \underset{ k_0=k_{n}=0  }
{(k_0,k_1,...,k_{n} )\in \Z^d} } {\sum}
                                                   \tilde{\P}^{\omega}_o (X_0=k_0,
X_1=k_1,...,X_{n}=k_{n} \  et \ f_{n}=f_0)\\
                          &=& \underset{ \underset{ k_0=k_{n}=0  }
{(k_0,k_1,...,k_{n} )\in \Z^d} } {\sum}
                                                   \tilde{\P}_o^{\omega} (X_0=k_0,
X_1=k_1,...,X_{n}=k_{n} ) \\
&\ & \hspace{4cm} \times \ \ \tilde{\P}_o^{\omega}( f_{n}=f_0 |X_0=k_0,...X_{n}=k_{n}) \\
                          &=&\underset{ \underset{ k_0=k_{n}=0  }
{(k_0,k_1,...,k_{n} )\in \Z^d} } {\sum}
                                                   \P_0^{\omega} (X_0=k_0,
X_1=k_1,...,X_{n}=k_{n} )   \\
& \ & \hspace{5cm} \times \underset {x; L_{x;n}>0 }
{\prod} \P_{0_x}^{B_x} ( Y^x_{L_{x;n}}  =0_x)   \\
                         &=& \mathbb{E}_0^{\omega} 
			 ( \underset {x; L_{x;n}>0 }{\prod} \P_{0_x}^{B_x}
			 ( Y^x_{L_{x;n} }  =0_x) 
			 \ \        
1_{       \{  X_{n}=0 \}        }         ).
\end{eqnarray*}
\end{proof}
In order to estimate  functional such that (\ref{functional}) and
in view of propostion \ref{fait0},  we
have to find graphs $B_x$ such that   for all $m \in \N:$ $$
\P_{0_x}^{B_x}( Y^x_{m }  =0)  \approx e^{-\lambda F(m,x) }.$$
Moreover, since we know that  an isoperimetric inequality with 
volume counted in respect to  measure $m$ and boundary counted
in respect to kernels $\tilde{a}$, gives an  upper
bound of the decay of the probability transitions of walk $Z$, 
in a first time we have to
estimate  the Folner function of $W_{\C} $  and so (by similar results of
section 1, see \cite{ershdur} \cite{ersh} and \cite{rothese}) we should know Folner function
of each $B_x$.\\

The graph formed by the possible jumps of walk $Z$ is not
$W_{\C} =\C\wr (B_z)_{z\in\C}$, so we introduce the graph with same set of points of
$W_{\C}$ but different set of edges. We call it $\C\wr \wr
(B_z)_{z\in\C}$ or shortly $W_{\C}'$ (or $W'$), the graph such that :
\begin{eqnarray} \label{W'} V(W'_{\C})&=& V(W_{\C} )    \
\text{and}, \\
\nonumber
((a,f);(b,g))\in E(W'_{\C}  )  \ &\Longleftrightarrow& \ \chi[ (a,f);(b,g)]=1.
\end{eqnarray}    
Thus, in the graph $W'_{\C}$, the random walk $ Z$ is a nearest
neighbor walk. Properties of $Z$ are linked to geometry of
$W'_{\C}$ but as we will see later $W_{\C} $ and  $W_{\C} '$
are roughly  isometric, so  we  can study isoperimetric profile of $W_{\C}
$.

\subsubsection{ \label{f1} Study of 
$\E_0^{\omega} (e^{
- \lambda 
  {\sum}
L_{z;n}^{\alpha} } ) $
}   
$ \ $ \\
{\bf{ Upper bound}}
Let $\alpha \in ]0,1[$ and 
$\beta=\frac{2\alpha}{1-\alpha}$ and let
$F(x)=e^{x^{\beta}}$.  Let $D_F$ be the graph given by
 proposition \ref{existence}.  We put for all 
 $ x\in \C,\ B_x=D_F$.\\
 
  First we want to obtain a
 lower bound of $Fol_ {         C_n^g \wr\wr D_F}^{
   C^g \wr\wr D_F} (k).$
We proceed in 3 steps:
  \begin{enumerate}[A.]
  \item
  By using general results on wreath product, see
  \cite{ershdur}, \cite{ersh} and \cite{rothese},
  we have  :
  $$Fol_{  \C_n^g \wr D_F }^{   \C^g \wr D_F} (k) \approx 
  (Fol_{D_F} (k) )^{Fol_{\C_n^g}^{\C^g}(k)}.$$

\item By proposition 1.4  of \cite{KL}, we get:\\
for all  $\gamma>0$, there  exists $\beta>0$ such
that for all $c>0$, $\Q$ a.s  for large enough 
$n$, we have :
\begin{eqnarray} \label{Fol^W}
Fol_{\C_n^g \wr D_F}^{\C^g \wr D_F}  (k) \succeq \begin{cases}
 \; F(k)^k & \text{if $k < c n^{\gamma},$} \\
  \; (F( k)^ {\beta k^d} & \text{if  $k\geq   c n^{\gamma}.$}
\end{cases}
\end{eqnarray}\\
\item We want to carry (\ref{Fol^W} )  on $Fol_{   \C_n^g \wr\wr
D_F}^{\C^g\wr \wr D_F} $. Let $\delta $  a
imaginary point and consider the following graphs:
\begin{eqnarray} \label{W_n} W_n=  
\overline{\C_n^g \wr D_F}  ,
\end{eqnarray}
and
\begin{eqnarray} \label{W_n'}  W_n'=     
\overline{\C_n^g \wr \wr D_F }
,\end{eqnarray}
defined by :
$$  V(\overline{\C_n^g  \wr D_F
})=V(\overline{\C_n^g \wr \wr D_F })= V( \C_n^g \wr  D_F ) \cup \{\delta\}$$
and set of edges are given by 
$$ E(W_n)=E(    \C_n^g \wr D_F)\cup
\{ (x,\delta);\ x\in  V    (\C_n^g \wr D_F) \ and \ \exists y\in V(W)\  \ (x,y)\in E(W) \}$$
and
$$ E(W_n')=E(   C_n^g \wr\wr D_F)      )\cup
\{ (x,\delta);\ x\in V(    C_n^g \wr\wr D_F ) \ and \ \exists y\in V(W) \ \ (x,y)\in E(W') \}.$$
 \\
 Let respectively $d$ and  $d'$  be the distances on  $W$
 and  $W'$, given by edges of these graphs.
 $W_n$ are  $W_n'$ are rough  isometric with  constants
 independant of $n$. With the  notations  of 
 definition 3.7 in  \cite{woess},
 we have  $A=3$  and $B=0.$ \\
 Indeed, consider  $$id: (V(W_n),d)\rightarrow (V(W'_n),d).$$ 
For all  $x,y\in V(W_n)=V(W_n'),$ we have: $$\frac{1}{3}d(x,y)\leq d'(x,y)\leq 3d(x,y).$$
 Thus  the respective Dirichlet forms  $\Erond$ and 
 $\Erond'$ for simple random walks on $W_n$ and 
 $W_n' $  satisfy: there 
  exists $c_1,c_2>0$ such that for all  $f: V(W_n )
  \rightarrow \R$ we have,
 $$ c_1 \Erond(f,f) \leq \Erond'(f,f) \leq c_2\Erond(f,f), $$
 with 
$$\Erond (f)= \underset{(x,y)\in E(W_n) }{\sum} (f(x)-f(y))^2, $$
and
 $$\Erond' (f)= \underset{(x,y)\in E(W_n') }{\sum} (f(x)-f(y))^2 .$$
Now, let $U \subset V(      C_n^g \wr\wr D_F  )$
and take  $f=1_U$, we get : 
 $$  c_1 |\partial_W U| \leq |\partial_{W'} U | \leq c_2 |\partial_W U|.$$
  \end{enumerate}
Hence,  we have proved that  (\ref{Fol^W}) carry to
   $Fol_{   \C_n^g \wr\wr
D_F}^{\C^g\wr \wr D_F} $, so we deduce:
 \begin{prop} \label{usefol}
For all  $\gamma>0$, there  exists $\beta>0$ such
that for all $c>0, \ \Q$ a.s on  the set 
$|\C|=+\infty$ and for large enough  $n$, we have : 
 \begin{eqnarray} \label{Fol^W'}
Fol_ {         C_n^g \wr\wr D_F}^{   C^g \wr\wr D_F} (k) \succeq \begin{cases}
 \; F(k)^k & \text{if $k < c n^{\gamma},$} \\
  \; (F( k)^ {\beta k^d} & \text{if  $k\geq   c n^{\gamma}.$}
\end{cases}
\end{eqnarray}
\end{prop}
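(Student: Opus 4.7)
The plan is to derive the Folner function estimate for the graph $\C_n^g \wr\wr D_F$ (whose edges encode the actual jumps of the random walk $Z$) by reducing to the ordinary wreath product $\C_n^g \wr D_F$ and then invoking a rough isometry argument. The proof proceeds in three steps, each of which transports one kind of Folner information.

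Step 1: Start from the general formula for Folner functions of wreath products given in \cite{ershdur}, \cite{ersh}, \cite{rothese}:
$$Fol_{\C_n^g \wr D_F}^{\C^g \wr D_F}(k) \approx \bigl(Fol_{D_F}(k)\bigr)^{Fol_{\C_n^g}^{\C^g}(k)}.$$
By proposition \ref{existence}, $Fol_{D_F}(k) \approx F(k)$. Next, apply Proposition 1.4 of \cite{KL}, which gives the crossover behavior of $Fol_{\C_n^g}^{\C^g}(k)$: for any $\gamma>0$ there is $\beta>0$ so that on $\{|\C|=+\infty\}$ and $n$ large, this quantity is $\succeq k$ when $k<cn^{\gamma}$ and $\succeq \beta k^d$ when $k\geq cn^{\gamma}$. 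Plugging these two estimates into the wreath product formula yields exactly the bound (\ref{Fol^W}) but with $\C^g \wr D_F$ in place of $\C^g \wr\wr D_F$.

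Step 2: Transfer the estimate from $\wr$ to $\wr\wr$. This is where the auxiliary graphs $W_n = \overline{\C_n^g \wr D_F}$ and $W_n' = \overline{\C_n^g \wr \wr D_F}$ (obtained by adjoining the point $\delta$ with edges recording the external boundary) come in. These two graphs have the same vertex set, and I claim the identity map is a rough isometry with constants $A=3$, $B=0$ independent of $n$: indeed, every edge of $W_n$ corresponds to a path of length at most $3$ in $W_n'$ (move the configuration at $a$, then the base point, then the configuration at $b$), and conversely every $W_n'$-edge is a $W_n$-path of length at most $3$. Hence the Dirichlet forms $\Erond$ and $\Erond'$ for the simple random walks on $W_n$ and $W_n'$ satisfy $c_1\Erond(f,f)\leq\Erond'(f,f)\leq c_2\Erond(f,f)$ for universal constants $c_1,c_2>0$.

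Step 3: Conclude by applying the Dirichlet comparison to indicator functions $f=\mathbf{1}_U$ for $U\subset V(\C_n^g \wr\wr D_F)$. This gives $c_1|\partial_W U|\leq |\partial_{W'}U|\leq c_2|\partial_W U|$, so that any set $U$ witnessing the Folner bound (\ref{Fol^W}) in the $\wr$ graph also witnesses it, up to a multiplicative constant in $k$, in the $\wr\wr$ graph. The main obstacle is Step 2: one must verify that the compactified graphs $W_n,W_n'$ really carry the Folner information of the relative Folner functions $Fol^{\C^g \wr D_F}_{\C_n^g \wr D_F}$ and $Fol^{\C^g \wr\wr D_F}_{\C_n^g \wr\wr D_F}$ (so that the $\delta$-vertex trick correctly encodes the external boundary), and that the rough isometry constants $A=3,B=0$ do not deteriorate with the geometry of the percolation cluster — which is precisely why the compactification must be performed at the finite-box level $\C_n^g$ before comparing, rather than directly on the infinite cluster.
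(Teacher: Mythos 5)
Your proposal follows essentially the same route as the paper: the wreath-product Folner formula combined with the cluster isoperimetry of Proposition 1.4 in \cite{KL} to get (\ref{Fol^W}), then the transfer to the $\wr\wr$ graph via the compactified graphs $W_n$, $W_n'$ with the auxiliary vertex $\delta$, the identity map as a rough isometry with constants $A=3$, $B=0$ independent of $n$, the comparison of Dirichlet forms, and the specialization to indicator functions $1_U$ to compare boundaries. The verification you flag as the remaining obstacle (that the $\delta$-compactification encodes the relative Folner functions and that the constants are uniform in $n$) is treated at the same level of brevity in the paper itself, so your argument is correct and matches it.
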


Now we are able to get an upper bound of 
$\tilde{\P}^{\omega}_o( Z_{2n}=o)$ and then  an upper bound   of our
functional.
Let  $\tau_n = \inf\{ s \geq 0 \ ; \ Z_s \not\in V(       C_n^g \wr\wr D_F)    \}$ .\\
We have,
$$\tilde{\P}^{\omega}_o(Z_{2n}=o) =\tilde{\P}^{\omega}_o(Z_{2n}=o  \
\text{and} \ 
\tau_n \leq n) + \tilde{\P}^{\omega}_o(Z_{2n}=o  \ \text{and} \ 
\tau_n > n   ).$$
The first term is zero since the walk can not  go
out the box   $V(        C_n^g \wr\wr D_F)  $ 
before time  $n$. \\
  The second term can be bounded with
  proposition \ref{usefol}.  
Let :
   \begin{eqnarray} 
\H(k)= \begin{cases}
 \; F(k)^k & \text{if $k < c n^{\gamma},$} \\
  \; (F( k)^ {\beta k^d} & \text{if  $k\geq   c n^{\gamma}.$}
\end{cases}
\end{eqnarray}
 - $\H$ is increasing and we can define an inverse
 function by  
 $$\H^{-1}(y)=\inf \{x;\H(x) \geq y \}. $$
 - Besides, with the help of   (\ref{Fol^W'}),  $$ Fol_{     C_n^g \wr\wr
 D_F}^{   C^g \wr\wr D_F}  \succeq
 \H. $$
 - $\C$ and  $D_F$ have bounded valency and from 
 formula of  $\tilde{m}$
 and $\tilde{a}$ (see  (\ref{atilde}) and
 (\ref{mtilde})) we have :
 $\underset{ V(W') }{\inf} \tilde{m}\geq \frac{1}{2d} >0$ et 
 $\underset{ E(W') } {inf} \tilde{a} >0 .$ 
  \\
 Thus, (see  theorem 14.3 in\cite{woess} for
 example) there  exists constants  $c_1,c_2 \ and  \
 c_3>0$ such that   $$\tilde{\P}^{\omega}_o(Z_{2n}=o  \ \text{et} \ 
\tau_n > n   )\preceq   u(n)$$
where  $u$ is  solution of the differential equation :
$$\left\lbrace
\begin{array}{l}
 u'=-\frac{u}{c_2(\H^{-1} (c_3/u))^2} , \\
 u(0)=c_1 .\\
\end{array}
\right.$$
Replacing   $F(k)$ by  $e^{k^{\beta} }$ into  $\H$, we get
the expression of  $\H^{-1}$:  
\begin{eqnarray}  \label{F^{-1}}
\H^{-1}(y)= 
\begin{cases}
 \;c(ln(y))^{\frac{1-\alpha}{ 1+\alpha}}  &
 \text{if $  1 \leq y <  e^{c n^{\frac{\gamma (1+\alpha)}{ 1-\alpha}} }   ,$} \\
  \;    cn^{\gamma}  &\text {if $ e^{c n^{\frac{\gamma (1+\alpha)}{ 1-\alpha}} }  \leq y <   
  e^{c n^{\frac{\gamma (d+\alpha(2-d) ) }{ 1-\alpha}} }   ,$} \\
  \;   c(ln(y))^{  \frac{1-\alpha}{d+\alpha(2-d)}} & 
  \text{if  $  
  e^{
  c n^
  {
  \frac{\gamma (d+\alpha(2-d) ) }
  { 1-\alpha}
  } 
  }    
     \leq y .$}
\end{cases}
\end{eqnarray}
Resolving the differential equation in the
different cases, we get :  
\begin{eqnarray*}  
u(t)= 
\begin{cases}
 \; c e^{-ct^{\frac{ 1+\alpha}{3-\alpha} }}  &
 \text{if $  t \leq  cn^{\frac{ \gamma(3-\alpha)}{ 1-\alpha} }   ,$} \\
  \; c  e^{cn^{\gamma\frac{1+\alpha}{1-\alpha}}}
  e^{-ct/n^{2\gamma} }  &\text {if $ cn^{\frac{ \gamma(3-\alpha)}{ 1-\alpha}} < t \leq
  cn^{\frac{ \gamma(d+2-d\alpha)}{ 1-\alpha} } + n^{\frac{\gamma(3-\alpha)}{1-\alpha}}    ,$} \\
  \;  ce^{-(ct- c'n^{\frac{ \gamma(d+2-d\alpha)}{ 1-\alpha} } -cn^{\frac{\gamma(3-\alpha)}{1-\alpha}}  
  )^{ \frac{d+\alpha(2-d) }{ 2+d-d\alpha}} } 
    & 
  \text{if  $  cn^{\frac{ \gamma(d+2-d\alpha)}{ 1-\alpha} }  + n^{\frac{\gamma(3-\alpha)}{1-\alpha}}    \leq t . $}
\end{cases}
\end{eqnarray*}\\
(Each $c$ design a different constant.)\\
\\
Now whe choose   $\gamma $  such that   $0 < \gamma < \min (\frac{1-\alpha}{d+2-d\alpha },
\frac{1-\alpha}{3-\alpha})$, then we get :\\

there  exists $c=c(p,d,\alpha,\lambda)>0$ such that
$$ u(2n) \leq e^{-cn^{\eta}},$$ 
with  $\eta= \frac{ d+\alpha (2-d) }{ 2+d(1-\alpha) }.$\\
So, $\Q$ a.s on the set $|\C|=+\infty$,  and for
large enough  $n$ (which depends on the cluster  
$\omega$),
$$\tilde{\P}^{\omega}_o(Z_{2n}=o    ) \preceq  e^{-n^{\eta}}.  $$

By proposition  \ref{fait0},  we deduce that   $\Q$
a.s 
on the event  $|\C|=+\infty$ and for large enough  $n$,
\begin{eqnarray} \label{prel}\mathbb{E}_0^{\omega} 
			 ( \underset {x; L_{x;2n}>0 }{\prod}
			 \P_{d_0 }^{D_F}
			 ( Y^{D_F}_{L_{x;2n} }  =d_0) 
			 \ \        
1_{       \{  X_{2n}=0 \}        }         )\preceq  e^{-n^{\eta}}.
\end{eqnarray} 
By our choice of graph $D_F$,  there exists $C_1,
C_2>0$  such that for all  $n\geq 1$:
\begin{eqnarray} \P_{d_0}^{D_F}
			 ( Y^{D_F}_{n }  =d_0 )  &\geq& C_1 e^{-
			 (C_2n)^{\alpha} },\\
			 &\geq& e^{- \lambda_0 n^{\alpha}} \label{C},
						 \end{eqnarray}
for some   $\lambda_0>0$.\\
\\
From  (\ref{prel}) and  (\ref{C}), we get that
there exists $\lambda_0>0$ such that $\Q$ a.s on
the set $\C|=+\infty$ and for large enough  $n$,
\begin{eqnarray} \label{preli}
\mathbb{E}_0^{\omega} 
( e^{- \lambda_0 \underset {x; L_{x;2n}>0
}{\sum} L_{x;2n}^{\alpha}  }  1_{\{ X_{2n}=0\}})
\preceq  e^{-n^{\eta}}.
\end{eqnarray}
To  conclude, it remains only to prove that we can
suppress the indicatrice function and  that we can
extend the inequality  (\ref{preli}) to all  $\lambda>0$. 
We explain this in 3 steps.
\begin{enumerate} [1.]
\item  First of all, notice that is is sufficient
to prove (\ref{ext2})  only for one value of
$\lambda$. Indeed, let  $\lambda >0$, assume that
for  $\lambda=\lambda_0$, we have:
\begin{eqnarray}\label{vpreli}
 \mathbb{E}_0^{\omega} 
( e^{- \lambda_0 \underset {x; L_{x;n}>0
}{\sum} L_{x;n}^{\alpha}  }  )
\preceq  e^{-n^{\eta}}.
\end{eqnarray}
-If  $\lambda\geq \lambda_0$, (\ref{vpreli}) is
true because we can replace  $\lambda_0$ by 
$\lambda$ using merely the  decrease.\\
-If  $\lambda < \lambda_0 $, we write 
\begin{eqnarray*} 
\E_0^{\omega} [  e^{-\lambda \underset{
x;L_{x;n} >0}{\sum}  L_{x;n}^{\alpha} }
  ]
  &=& 
  \E_0^{\omega} [  (e^{-\lambda_0 \underset{
  x;L_{x;n} >0}{\sum}  L_{x;n}^{\alpha} }
   )^{\frac{\lambda}{\lambda_0}}] \\
  &\leq & 
  (\E_0^{\omega} [  e^{-\lambda_0 \underset{
  x;L_{x;n} >0}{\sum}  L_{x;n}^{\alpha} }
  ]
   )^{\frac{\lambda}{\lambda_0}} \\
  &\ & \text{(Jensen inequality applied to concave
  function}\  x\rightarrow 
  x^{   \frac{\lambda}{\lambda_0} }.)\\
  &\preceq & e^{-n^{\eta}} .
\end{eqnarray*}

\item To take out the indicatrice function, we use
the following lemma: 
\begin{lem}  \label{bidule} For all  $m\geq 0$, we
have :
$$\P_0^{\omega} (\sum_x L_{x;n}^{\alpha}=m )^2
\leq 2d(2m+1)^d \  \P_0^{\omega} (    \sum_x
L_{x;2n}^{\alpha}\leq 2m  \text{ et } X_{2n}=0
).$$
\end{lem}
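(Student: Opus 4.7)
The plan is a standard reversibility/Cauchy--Schwarz argument combined with the concavity of $t \mapsto t^{\alpha}$. Set $S_n = \sum_x L_{x;n}^{\alpha}$ and fix $m$ with $\P_0^{\omega}(S_n = m) > 0$. First I would decompose according to the endpoint at time $n$:
\[
\P_0^{\omega}(S_n = m) \;=\; \sum_{y} \P_0^{\omega}(X_n = y,\, S_n = m).
\]
The crucial observation is that on $\{S_n = m\}$ the walk visits at most $m$ sites (since each visited site contributes at least $1$ to $S_n$ when $\alpha \in [0,1]$). The visited set is a connected subgraph of $\C^g$ containing $0$, hence any $y$ in it satisfies $|y|_\infty \le m$, so the sum is effectively over the box $[-m,m]^d$, which has $(2m+1)^d$ points. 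Cauchy--Schwarz then yields
\[
\P_0^{\omega}(S_n = m)^2 \;\le\; (2m+1)^d \sum_{y \in [-m,m]^d} \P_0^{\omega}(X_n = y,\, S_n = m)^2.
\]

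Next I would invoke reversibility of the simple random walk on $\C^g$ with respect to the degree measure $\mu(x) = \nu_{\C^g}(x)$. For any path $\gamma$ from $0$ to $y$ of length $n$, its time-reversal $\gamma^R$ has probability $\tfrac{\mu(0)}{\mu(y)} P(\gamma)$ and the same local-time profile, so
\[
\P_0^{\omega}(X_n = y,\, S_n = m) \;=\; \frac{\mu(y)}{\mu(0)}\,\P_y^{\omega}(X_n = 0,\, S_n = m).
\]
Substituting one factor in the square and using $\mu(y)/\mu(0) \le 2d$ (valid since degrees in $\C^g$ lie in $[1, 2d]$ and $0 \in \C$), I get
\[
\P_0^{\omega}(X_n = y,\, S_n = m)^2 \;\le\; 2d \cdot \P_0^{\omega}(X_n = y,\, S_n^{(1)} = m)\,\P_y^{\omega}(X_n = 0,\, S_n^{(2)} = m),
\]
where $S_n^{(1)}$ and $S_n^{(2)}$ denote the functionals computed from the first and last $n$ steps respectively. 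Summing over $y$ and applying the Markov property at time $n$ collapses the right-hand side to
\[
2d \cdot \P_0^{\omega}\bigl(X_{2n} = 0,\, S_n^{(1)} = m,\, S_n^{(2)} = m\bigr).
\]

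It remains to pass from the joint event $\{S_n^{(1)} = m,\, S_n^{(2)} = m\}$ to $\{\sum_x L_{x;2n}^{\alpha} \le 2m\}$. Since $\alpha \in [0,1]$, the map $t\mapsto t^{\alpha}$ is subadditive on $\R_{+}$, so for every site $x$,
\[
L_{x;2n}^{\alpha} = \bigl(L_{x;n} + (L_{x;2n} - L_{x;n})\bigr)^{\alpha} \;\le\; L_{x;n}^{\alpha} + (L_{x;2n} - L_{x;n})^{\alpha},
\]
and summing over $x$ yields $\sum_x L_{x;2n}^{\alpha} \le S_n^{(1)} + S_n^{(2)} = 2m$ on the event in question. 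Combining everything gives exactly
\[
\P_0^{\omega}(S_n = m)^2 \;\le\; 2d\,(2m+1)^d\, \P_0^{\omega}\!\Bigl(\sum_x L_{x;2n}^{\alpha} \le 2m,\ X_{2n} = 0\Bigr).
\]
The only mildly delicate step is the geometric bound $|y|_\infty \le m$ on the endpoint; everything else (Cauchy--Schwarz, reversibility, subadditivity of $t^{\alpha}$) is routine and the factor $2d$ comes transparently from the degree bound on $\C^g \subset \Z^d$.
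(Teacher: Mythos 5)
Your argument is correct and follows essentially the same route as the paper: decompose over the position at time $n$ (confined to a ball of radius $m$ because each visited site contributes at least $1$ to $\sum_x L_{x;n}^{\alpha}$), apply Cauchy--Schwarz, use reversibility to flip one factor, recombine by the Markov property at time $n$, and finish with subadditivity of $t\mapsto t^{\alpha}$. The only cosmetic difference is where the degree factor enters (you use unweighted Cauchy--Schwarz plus $\mu(y)/\mu(0)\le 2d$, while the paper weights by $\sqrt{\nu(h)}$ and bounds $\nu(B_m(\C))\le 2d(2m+1)^d$), yielding the same constant $2d(2m+1)^d$.
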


\begin{proof}
\begin{eqnarray*}
[\P_0^{\omega}(  \sum_x L_{x;n}^{\alpha} =m)]^2 &=&   \Bigl(   \underset{ h\in B_m(\C)}{\sum}    
\P_0^{\omega} (  \sum_x L_{x;n}^{\alpha}  =m ; X_n=h)
\Bigr) ^2  \\
&=& 
   \Bigl(   \underset{ h\in B_m(\C)}{\sum}    
 \sqrt{\nu(h)}  \times 1/\sqrt{\nu(h)}  \times \\
  &\ & \hspace{4.4cm} \P_0^{\omega}(    \sum_x L_{x;n}^{\alpha}=m ; X_n=h)
\Bigr) ^2    \\
&\leq&  \nu(B_m(\C)) \underset{ h\in B_m(\C) }{\sum} (1/\nu(h))   \P_0^{\omega}
(\sum_x L_{x;n}^{\alpha}=m ; X_n=h)^2\\
& \ &\ \hspace{0cm} 
\text{(Cauchy-Schwarz inequality) } \\
&\leq& 2d (2m+1)^d   
 \underset{ h\in B_m(\C) }{\sum}   
 \P_0^{\omega}( \sum_x L_{x;n}^{\alpha}=m ; X_n=h) \times \\
 &\ &  \hspace{3.5cm} \P_h^{\omega}(     \sum_x L_{x;n}^{\alpha}=m ; X_n=0)(1/\nu(0)) \\
& \ &\ \hspace{0cm}  
\text{ (by reversibility ) } \\
&\leq& 2d (2m+1)^d   
 \underset{ h\in B_m(\C) }{\sum}    
 \P_0^{\omega}(   \sum_x L_{x;n}^{\alpha}=m ; X_n=h) \times \\
&\ & \hspace{3.3cm}  \P_0^{\omega}(   \sum_x
L_{x;[n;2n]}^{\alpha}=m ; X_n=h ; X_{2n}=0)\\
 & \ &\     \hspace{0cm}    ( \text{where  }    
 L_{x;[n;2n]}= \#\{ i\in[n;2n];\  X_i=x  \} )\\
  &\leq& 2d (2m+1)^d \ \P_0^{\omega}(    \sum_x
  L_{x;2n}^{\alpha}  \leq 2m;X_{2n}=0). 
\end{eqnarray*}
because  $\{ \sum_x L_{x;n}^{\alpha}   =m  \text{ et
}  \sum_x L_{x;[n;2n] }^{\alpha} =m\}\subset \{   \sum_x L_{x;2n}^{\alpha}
\leq 2m\},$
since for  $\alpha \in [0,1[,$ we have :   $$L_{x;2n}^{\alpha}\leq
(  L_{x;n} +
L_{x;[n;2n]})^{\alpha} \leq   L_{x;n} ^{\alpha} +
L_{x;[n;2n]}^{\alpha} . $$
\end{proof}
Then we write,
\begin{eqnarray*}
\E_0^{\omega}(    e^{-\lambda_0  \sum_x L_{x;2n }^{\alpha}       } 1_{\{ X_{2n}=0   \}}  ) 
&=& \underset{  m \geq 1}{\sum}   e^{-\lambda_0
m} \ 
\P_0^{\omega}(    \sum_x L_{x;2n }^{\alpha}  =m          ; X_{2n}=0 )\\
&=&  (1-e^{-\lambda_0}) \underset{  m \geq 1}{\sum}
  e^{-\lambda_0 m} \
\P_0^{\omega}(     \sum_x L_{x;2n }^{\alpha} \leq m ;
X_{2n}=0 ),\\
\end{eqnarray*}
since  $  \{   \sum_x L_{x;2n }^{\alpha}=m\}=
\{  \sum_x L_{x;2n }^{\alpha} \leq m\}-\{    
\sum_x L_{x;2n }^{\alpha} \leq m-1\}.$ Thus, we
have,
\begin{eqnarray*}
 \E_0^{\omega}(   e^{-\lambda_0}   \sum_x
 L_{x;2n }^{\alpha} 1_{\{ X_{2n}=0   \}}  ) 
 & \geq & 
  (  1- 
  e^{   - \lambda_0 }   
   )  
 \underset{m \geq 1}{\sum}  
 e^{-2\lambda_0 m}  \P_0^{\omega}(     \sum_x L_{x;2n }^{\alpha} \leq 2m ; X_{2n}=0 )\\
&\ & \hspace*{0cm}(\text{we add only the even $m$}   \  )\\
&\geq& (  1- 
  e^{   - \lambda_0 }   
   ) 
    \underset{  m \geq 1}{\sum} \frac{1}{2d
(2m+1)^d}e^{-2\lambda_0 m} \ 
[\P_0^{\omega}(     \sum_x L_{x;n }^{\alpha}=m)]^2 \\
&\ & (\text{by lemma  } \ref{bidule} ) \\
&\geq&      \underset{  m \geq 1}{\sum} e^{-\lambda_1 m}  [\P_0^{\omega}
(\sum_x L_{x;n }^{\alpha} =m)]^2 \\
&\ & (\text{for some  
$\lambda_1 > 2\lambda_0 $ }) \\
&\geq&    \Bigl( \underset{  m \geq 1}{\sum}
e^{-\lambda_1 m}
\Bigr)^{-1} \times \\
& \ & \hspace{2.8cm}\Bigl(
\underset{  m \geq 1}{\sum} e^{-m\lambda_1
}\  \P_0^{\omega}(\sum_x L_{x;n }^{\alpha}=m) 
\Bigr)\\
&\ & \hspace*{0cm} (\text{By Cauchy-Schwarz
inequality})\\
&\geq&   c_0\ \E_0^{\omega} [e^{ -\lambda_1  \sum_x
L_{x;n }^{\alpha}}  ] .\\
\end{eqnarray*}

\item We can  now conclude. By previous inequality and
by   (\ref{preli}), there  exists $\lambda_1 $
such that :  
$$   \E_0^{\omega} [e^{ -\lambda_1  \sum_x
L_{x;n }^{\alpha}}  ]  \preceq e^{-n^{\eta}}.$$
Then by step 1, we can extend this inequality to
all $\lambda_1$ . 
\end{enumerate}
Finaly we have proved :
\begin{prop} \label{1a}
 $Q \ a.s$ on  $|\C|=+\infty$ for large enough 
 $n$ and for all  $\lambda>0$ we have,
for all  $\alpha \in [0,1[,$
 $$\E_0^{\omega} (  e^{-\lambda \underset{
 x;L_{x;n} >0}{\sum}  L_{x;n}^{\alpha} })  \preceq 
 e^{-n^{\eta}},$$
 where  $\eta=\frac{ d+\alpha (2-d) }{ 2+d(1-\alpha) }.$
 \end{prop}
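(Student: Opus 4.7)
The plan is to exploit the key identity of Proposition \ref{fait0} with fibers equal to the graphs $D_F$ of Proposition \ref{existence} for $F(x) = e^{x^{\beta}}$, $\beta = \frac{2\alpha}{1-\alpha}$. Since $\P_{d_0}^{D_F}(Y_m^{D_F} = d_0) \succeq e^{-c m^{\alpha}}$, any upper bound on the return probability of the walk $Z$ on $W_{\C}' = \C^g \wr\wr D_F$ translates, via Proposition \ref{fait0}, into an upper bound on $\E_0^{\omega}[ e^{-\lambda_0 \sum L_{x,2n}^{\alpha}} 1_{\{X_{2n}=0\}}]$ for some specific $\lambda_0 > 0$.

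To bound that return probability I would first estimate the F\o lner function of $W_{\C}'$. Combining the wreath product F\o lner formula from \cite{ershdur, ersh, rothese} with the percolation cluster estimate from Proposition 1.4 of \cite{KL}, and transferring via the rough isometry between $\C^g \wr D_F$ and $\C^g \wr\wr D_F$ (same vertex set, and the two graph distances differ by a bounded factor, giving a Dirichlet-form comparison and hence comparable boundaries), one obtains the two-regime lower bound of Proposition \ref{usefol}. The Coulhon-type Proposition \ref{propfol}, applied inside the box $V(\C_n^g \wr\wr D_F)$ via the exit time $\tau_n$, converts this into return-probability decay by solving the ODE $u' = -u/(8(\H^{-1}(4/u))^2)$ with the explicit three-regime $\H^{-1}$ of (\ref{F^{-1}}). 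Choosing $\gamma < \min\bigl(\frac{1-\alpha}{d+2-d\alpha}, \frac{1-\alpha}{3-\alpha}\bigr)$ aligns the two cutoffs and yields $\tilde{\P}_o^{\omega}(Z_{2n} = o) \preceq e^{-n^{\eta}}$, hence the indicator-weighted bound above.

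To remove the indicator $1_{\{X_{2n}=0\}}$, I would use the symmetrization Lemma \ref{bidule}: expanding the Laplace transform as $\sum_m e^{-\lambda_0 m} \P_0^{\omega}(\sum L_{x,2n}^{\alpha} = m,\; X_{2n}=0)$ and applying Cauchy--Schwarz with the polynomial volume $\nu(B_m(\C))$ produces, up to a multiplicative constant, the full expectation $\E_0^{\omega}[e^{-\lambda_1 \sum L_{x,n}^{\alpha}}]$ for some $\lambda_1 > 2\lambda_0$. Finally, the extension to an arbitrary $\lambda > 0$ is free for $\lambda \ge \lambda_1$ by monotonicity of $x \mapsto e^{-\lambda x}$, while for $\lambda < \lambda_1$ it follows from Jensen's inequality applied to the concave map $x \mapsto x^{\lambda/\lambda_1}$.

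The hard part is the F\o lner function estimate for the generalized wreath product on the supercritical cluster: one must marry the abstract Erschler-type wreath product bound with the $Q$-almost-sure cluster isoperimetric inequality of \cite{KL}, and verify that a F\o lner lower bound transfers from $\C^g \wr D_F$ to $\C^g \wr\wr D_F$ with constants uniform in $n$ via the rough isometry. Once that is in hand, the precise exponent $\eta = \frac{d + \alpha(2-d)}{2 + d(1-\alpha)}$ emerges from the three regimes of the ODE solution together with the optimization in $\gamma$; the indicator removal and Jensen step are then essentially routine.
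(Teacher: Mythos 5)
Your proposal follows the paper's own argument essentially step by step: the same choice of fibers $B_x=D_F$, the identity of Proposition \ref{fait0}, the F\o lner bound of Proposition \ref{usefol} obtained from the wreath-product formula, Proposition 1.4 of \cite{KL} and the rough isometry, the ODE/exit-time argument with the three-regime $\H^{-1}$ and the choice of $\gamma$, then Lemma \ref{bidule} to remove the indicator and Jensen's inequality to cover all $\lambda>0$. The only cosmetic difference is that you invoke the Coulhon-type Proposition \ref{propfol} where the paper uses the analogous result (Theorem 14.3 of \cite{woess}) adapted to the measure $\tilde{m}$ and kernel $\tilde{a}$, which changes nothing of substance.
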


 \begin{rem} $\ $\\
 1) If  $\alpha=0$, we retrieve the Laplace 
 transform of the number of visited  points by the
 simple random walk on an infinite cluster. \\
 2)For  $\alpha =1$,  inequality is satisfied since
  $  \underset{ x;L_{x;n} >0}{\sum}  L_{x;n} =n$
  and   $\eta=1$ when $\alpha =1$.
  \end{rem}

{\bf{Lower bound}}\\
The proof falls into 4 steps.
\begin{enumerate} [1.]
\item By concavity of the function   $ \  x\mapsto x^{\alpha} $ 
for  $\alpha \in [0,1],$ we have :

\begin{eqnarray*}
 \underset{ x;L_{x;n} >0}{\sum}  L_{x;n}^{\alpha}  &\leq &
   N_n  (\underset{ x;L_{x;n} >0}{\sum}  \frac{ L_{x;n} }{N_n} )^{\alpha}  \\
                           &=&	 N_n^{1-\alpha} n^{\alpha} .    
\end{eqnarray*}

So,
\begin{eqnarray*}
  \E_0 (  e^{-\lambda \underset{ x;L_{x;n} >0}{\sum}  L_{x;n}^{\alpha} } 
  )  &\geq &
  \E_0( e^{-\lambda n^{\alpha} N_n^{1-\alpha}  } 
  )  \\
                           &\geq&	  \P_0 (\underset{ 0\leq i \leq n}
			   {\sup} D(0,X_i) \leq m 
			    ) 
			    e^{-\lambda V(m)^{1-\alpha} n^{\alpha}} .    
\end{eqnarray*}

where $V(m)=|B_m(\C)|$ stands for the  volume of the ball of   $\C$
centred at the origin with radius   $m$. \\
\item

By proposition  5.2 of  \cite{KL}, we have :
\begin{eqnarray} \label{deb}
\P_0( \underset{ 0\leq i \leq n}{\sup} D(0,X_i) \leq m 
  )\geq 
  e^{ -c(m+ \frac{n}{m^2}) }
\end{eqnarray}

\item By lemma 5.3 of \cite{KL}, there   exists $c>0$ such that  $Q$ a.s 
on  $|\C|=+\infty $ and for large enough  $n$, 
$$V(m)\geq cm^d.$$
\item So, we deduce, there exists $C>0$ such that  $Q$ a.s on
$|\C|=+\infty $ and for large enough  $n$,
$$\E_0 (  e^{-\lambda \underset{ x;L_{x;n} >0}{\sum}  L_{x;n}^{\alpha} } 
 )  \geq 
e^{-C(m+\frac{n}{m^2}  +\lambda n^\alpha m^{d(1-\alpha)} ) }$$
Taking  $m =n^{\frac{ 1-\alpha}{ 2+d(1-\alpha) } } $, we get :
  $$\E_0 (  e^{-\lambda \underset{x;L_{x;n} >0}{\sum}  L_{x;n}^{\alpha} } 1_{\{ X_n=0\}} )  \geq 
e^{-cn^{\eta}}, $$
with $\eta= \frac{ d+ \alpha(2-d)}{ d(1-\alpha) +2}$  and for all  
 $\alpha \in [0,1].$
 \end{enumerate}
Hence, we have proved :
\begin{prop} \label{1b}For all  $\alpha \in [0,1]$, $Q$ a.s  on  $|\C|=+\infty $
and for  large enough $n$,
$$
\E_0 (  e^{-\lambda \underset{x;L_{x;n} >0}{\sum}  L_{x;n}^{\alpha} }
)  \succeq 
e^{-cn^{\eta}},$$
with $\eta= \frac{ d+ \alpha(2-d)}{ d(1-\alpha) +2}. $
\end{prop}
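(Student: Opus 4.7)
My plan is to obtain the lower bound by forcing the walk to remain confined in a small ball of the cluster and then controlling the sum $\sum_x L_{x,n}^{\alpha}$ trivially on that event. The main input will be concavity of $x\mapsto x^{\alpha}$, a confinement estimate for the walk, and the linear volume growth of balls in the supercritical infinite cluster.

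First, by Jensen's inequality applied to the concave function $x\mapsto x^{\alpha}$ on $\alpha\in[0,1]$, for any realization of the walk I would write
$$\sum_{x;\, L_{x,n}>0} L_{x,n}^{\alpha} \;=\; N_n\sum_{x;\, L_{x,n}>0}\frac{L_{x,n}^{\alpha}}{N_n} \;\leq\; N_n\Bigl(\sum_{x;\, L_{x,n}>0}\frac{L_{x,n}}{N_n}\Bigr)^{\alpha} \;=\; N_n^{1-\alpha}\,n^{\alpha},$$
where $N_n$ is the number of distinct sites visited up to time $n$. Hence on the event $\{\sup_{0\leq i\leq n}D(0,X_i)\leq m\}$, I have $N_n\leq V(m):=|B_m(\C)|$, so the exponential is at least $e^{-\lambda V(m)^{1-\alpha}n^{\alpha}}$.

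Next, I would invoke the confinement estimate of Kumagai–Mathieu (proposition 5.2 of \cite{KL}), which gives a lower bound $\P_0^{\omega}(\sup_{0\leq i\leq n}D(0,X_i)\leq m) \geq e^{-c(m+n/m^2)}$, valid $Q$-a.s.\ on $\{|\C|=+\infty\}$ for large $n$. Combined with the above, restricting the expectation to this confinement event yields
$$\E_0^{\omega}\Bigl(e^{-\lambda\sum_x L_{x,n}^{\alpha}}\Bigr) \;\geq\; \exp\Bigl(-c\bigl(m+\tfrac{n}{m^2}+\lambda\,V(m)^{1-\alpha}n^{\alpha}\bigr)\Bigr).$$

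Then I would use the linear volume growth lemma 5.3 of \cite{KL} to replace $V(m)$ by $Cm^d$ (valid $Q$-a.s.\ on $\{|\C|=+\infty\}$ and for large enough $m$, depending on the cluster). Finally, I would optimize the exponent $m+n/m^2+\lambda n^{\alpha}m^{d(1-\alpha)}$ in $m$. A direct computation shows that the dominant balance is between $n/m^2$ and $n^{\alpha}m^{d(1-\alpha)}$, giving optimal $m\approx n^{(1-\alpha)/(2+d(1-\alpha))}$; for this choice the term $m$ is of lower order, and all three terms are $O(n^{\eta})$ with $\eta=\frac{d+\alpha(2-d)}{2+d(1-\alpha)}$. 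This yields the claimed lower bound.

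There is no serious obstacle: the only subtle point is to ensure that the choice of $m$ simultaneously satisfies the range of validity of the KL confinement inequality and of the volume growth estimate (both hold for $m$ large in terms of the cluster, but independently of the precise constants), which is routine since $m\to\infty$ with $n$. The constants in $\approx$ do not depend on $\omega$ because the quantitative bounds from \cite{KL} are uniform on $\{|\C|=+\infty\}$ up to choosing $n$ large (depending on $\omega$), matching the statement of Theorem \ref{ext}.
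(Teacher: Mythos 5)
Your proposal is correct and follows essentially the same route as the paper's proof: concavity to bound $\sum_x L_{x,n}^{\alpha}\leq N_n^{1-\alpha}n^{\alpha}$, restriction to the confinement event with the bound $e^{-c(m+n/m^2)}$ from \cite{KL}, the volume estimate $V(m)\approx m^d$, and optimization at $m\approx n^{\frac{1-\alpha}{2+d(1-\alpha)}}$. No gaps to report.
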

Thus, the first assertion  of Theorem  \ref{ext} comes from proposition \ref{1a}
and proposition \ref{1b}.

\subsubsection{ Study of 
$ \E_0^{\omega}( 
\prod   L_{z;n}^{-\alpha}  ) $
}
 $\ $  \\
 We  assume $\alpha>1/2.$
 \\ \\
 {\bf{Upper bound}} \\
For this functional, one can take for all  $x\in \C$, $B_x=\L^1=(\Z,E_1
)$ ( if we take some $\L^r$, we get the same bound). We have :
$$ Fol_{\L^1}(k)= 2k.$$
We still use a random walk $Y$ which jumps  can be  represented by: \\
$$\includegraphics*[width=6cm]{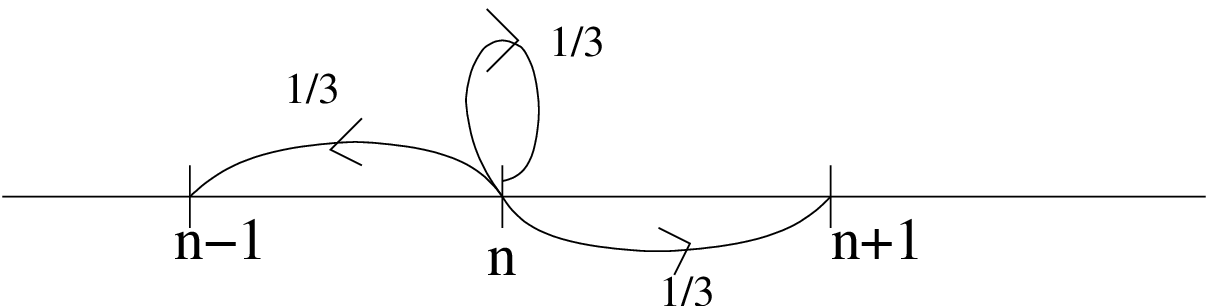}$$
Let   $\P^{\L^1 }$ be the  law of random walk $Y$.\\
 
As before, let  $W,W',W_n$ and  $W_n'$ be the  graphs defined  respectively by  
 (\ref{W}) (\ref{W'})
  (\ref{W_n}) and 
 (\ref{W_n'})  with    $D_F=\L^1=(\Z,E_1)  $.
With the help of  proposition 1.4 in \cite{KL}  and general properties 
of isoperimetry on wreath product, we deduce:\\
for all  $\gamma>0$, there exists $c,\beta >0$ such that $Q$ a.s on  
$|\C|=+\infty$ we have :
 \begin{eqnarray} \label{Fol^Wbis}
Fol_{  W_n  }^{W } (k) \succeq \begin{cases}
 \; k^k & \text{if $k < c n^{\gamma},$} \\
  \;  k^ {\beta k^d} & \text{if  $k\geq   c n^{\gamma}.$}
\end{cases}
\end{eqnarray}

With the same argument as in the upper bound of   section \ref{f1},  we carry 
(\ref{Fol^Wbis}) to  $Fol_{W_n' }^{    W' } $ 
by rough isometry between  graphs $W_n$ and  $W'_n$.  We get:\\
for all  $\gamma>0$, there  exists $\beta >0$ such that for all $c>0,\ Q$
a.s on  
$|\C|=+\infty$  we have:
 \begin{eqnarray} \label{Fol^W'bis}
Fol_{ W_n'}^{   W'} (k) \succeq \begin{cases}
 \; k^k & \text{if $k < c n^{\gamma},$} \\
  \;  k^ {\beta k^d} & \text{if  $k\geq   c n^{\gamma}.$}
\end{cases}
\end{eqnarray}

In order to get an upper bound of  $\tilde{\P}^{\omega}_o(Z_{2n}=o) $,
let again:
$$\tau_n=\inf\{ s\geq 0;\ Z_s\not\in  V(     W_n') \}.$$
We still have   $\tilde{\P}^{\omega}_o(Z_{2n}=o) =
\tilde{\P}^{\omega}_o(Z_{2n}=o \  \text{and } \ \tau_n>n ) $. We use the
same  way to get the upper bound from (\ref{Fol^W'bis}).\\
 Inequality  (\ref{Fol^W'bis}) implies:
  \begin{eqnarray}  \label{Fol^W'bibis }
\forall k\geq 0 \ \ \  Fol_{     W_n'}^{W'}(k) \succeq \J_N (k)= 
\begin{cases}
 \; 1 & \text{if $k < c n^{\gamma}$,} \\
  \;  N^ {\beta d' k^d} & \text{if  $k\geq   c n^{\gamma},$}
\end{cases}
\end{eqnarray}\\
where $N \leq  cn^{\gamma}$.\\
$\J_N$ is increasing and we can compute  $\J_N^{-1}$:
 \begin{eqnarray*} \J_N^{-1} &=& \inf\{x; \ \J_N(x)\geq y \} \\
 &=& \begin{cases}
 \; cn^{\gamma} & \text{if $1\leq y < N^{cn^{d\gamma}} ,$}\\
 \; c\Bigl(\frac{\ln(y)}{\ln(N)} \Bigr)^{1/d} &\text{if $ N^{cn^{d\gamma}}  \leq y.$}
 \end{cases}
 \end{eqnarray*}
 
 \begin{rem} Let   \begin{eqnarray} 
\J(k)= \begin{cases}
 \; k^k & \text{if $k < c n^{\gamma},$} \\
  \;  k^ {\beta k^d} & \text{if  $k\geq   c n^{\gamma}.$}
\end{cases}
\end{eqnarray}
 Inequality   (\ref{Fol^W'bis})  can be read   $Fol_{   W_n'}^{W'}(k) \succeq \J(k)$.
 $\J$ is increasing but the form of $\J$  does not enable us to compute
 an inverse and for  this reason we use $\J_N$ for the lower bound of
  $Fol_{  W_n'}^{ W'}(k) $ instead of $\J$. 
   \end{rem}
 
 $\C $ and $\L^1$ have bounded valency so we still have 
 $\underset{ V(W') }{\inf} \tilde{m}\geq 2 >0$ and  
 $\underset{ E(W' )  }{inf} \tilde{a} >0 .$ Thus with the same tools
as in section \ref{f1} we get, there  exists  constants $c_1,c_2 \ and \
c_3>0$  such that 
  $$\tilde{\P}^{\omega}_o(Z_{2n}=o  \ \text{and} \ 
\tau_n > n   )\preceq   u(n)$$
where $u$ is  solution of the differential equation:
$$\left\lbrace
\begin{array}{l}
 u'=-\frac{u}{c_2(\J_N^{-1} (c_3/u))^2} , \\
 u(0)=1/2 .\\
\end{array}
\right.$$
Solving this equation, we obtain:
\begin{eqnarray*}  
u(t)= 
\begin{cases}
 \;  e^{-ct/n^{2\gamma}  }& \text{if } 
   t \leq  t_0:=  cn^{\gamma(d+2) } \ln(N), \\
  \;    e^{-(\    c(\ln(N)^{2/d}(t-t_0) + \ln(1/u(t_0))^{\frac{d+2}{d} } 
         \   )^{\frac{d}{d+2}}   }  &\text {if $ t>t_0 .$} \\
\end{cases}
\end{eqnarray*}\\
Chosing  $\gamma < \frac{1}{d+2}$ and taking 
$N=cn^{\gamma}$, we obtain in $t=n$:\\
 $Q$ a.s   on the event  $|\C|=+\infty$ and for large enough  $n$,
 $$\tilde{\P}^{\omega}_o(Z_{2n}=o) \preceq
  e^{-n^{\frac{d}{d+2} } \ln(n)^{\frac{2}{d+2} } }.$$
So with proposition \ref{fait0}, we deduce :
  \begin{prop}
  \label{avecproba} There exists a constant $C>0$ such that   $Q$ a.s on 
  $|\C|=+\infty$ and for large enough  $n$,
   \begin{eqnarray} \label{oufrate} \mathbb{E}_0^{\omega} 
   ( \underset {x; L_{x;2n}>0 }{\prod} \P_{0}^{\L^1}
	( Y_{L_{x;2n} }  =0) 
	\ \        
1_{       \{  X_{2n}=0 \}        }         )  
 \leq
  e^{- C n^{\frac{d}{d+2} } \ln(n)^{\frac{2}{d+2} } } .
  \end{eqnarray}
  \end{prop}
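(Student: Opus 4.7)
The proof plan is short because almost all the work has already been carried out: Proposition~\ref{avecproba} is essentially the translation, through the identity of Proposition~\ref{fait0}, of the return-probability bound for $Z$ that was just established in the lines preceding the statement.

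The first step is to specialize the generalized wreath-product construction by taking, for every $x\in\C$, the fiber $B_x=\L^1=(\Z,E_1)$ with origin $0_x:=0$. This is precisely the setting in which the preceding Folner estimate (\ref{Fol^W'bis}), its inversion $\J_N^{-1}$ with the choice $N$ of order $n^\gamma$ and $\gamma<1/(d+2)$, and the resulting resolution of $u'=-u/(c_2(\J_N^{-1}(c_3/u))^2)$ were carried out. The outcome is that $Q$-a.s.\ on $\{|\C|=+\infty\}$ and for $n$ large enough (depending on the cluster $\omega$),
$$ \tilde{\P}^\omega_o(Z_{2n}=o)\preceq e^{-n^{\frac{d}{d+2}}\ln(n)^{\frac{2}{d+2}}}. $$
Unfolding the meaning of $\preceq$ and absorbing the inner multiplicative constant into the exponent yields a constant $C>0$ for which $\tilde{\P}^\omega_o(Z_{2n}=o)\leq e^{-Cn^{\frac{d}{d+2}}\ln(n)^{\frac{2}{d+2}}}$ whenever $n$ is large enough.

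The second step is to invoke Proposition~\ref{fait0} at time $2n$ with this common choice of fibers. Since $B_x=\L^1$ does not depend on $x$, we have $\P^{B_x}_{0_x}(Y^x_{L_{x;2n}}=0_x)=\P^{\L^1}_0(Y_{L_{x;2n}}=0)$ for every $x\in\C$, so the product appearing on the left-hand side of (\ref{oufrate}) is literally the one provided by Proposition~\ref{fait0}. Consequently that expectation equals $\tilde{\P}^\omega_o(Z_{2n}=o)$, and plugging in the bound from the first step closes the argument.

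The main obstacle is not in this final step but upstream: it lies in deriving the Folner lower bound (\ref{Fol^W'bis}), which combines a Pittet--Saloff-Coste / Erschler type identity for Folner functions of wreath products, the isoperimetric estimates on $\C_n^g$ from \cite{KL}, and the rough-isometry transfer between $W_n$ and $W_n'$ that turns a bound on $Fol_{W_n}^{W}$ into one on $Fol_{W_n'}^{W'}$. Once that input is granted, the passage to Proposition~\ref{avecproba} is purely cosmetic: one specializes the fibers, reads off the probabilistic identity of Proposition~\ref{fait0}, and absorbs constants.
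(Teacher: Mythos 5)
Your proposal matches the paper's own argument: the paper establishes the return-probability bound $\tilde{\P}^{\omega}_o(Z_{2n}=o)\preceq e^{-n^{d/(d+2)}\ln(n)^{2/(d+2)}}$ exactly as you describe (fibers $B_x=\L^1$, the Folner bound (\ref{Fol^W'bis}) transferred by rough isometry, the $\J_N$ inversion with $N\approx n^{\gamma}$, $\gamma<1/(d+2)$, and the differential-equation estimate), and then concludes Proposition \ref{avecproba} by invoking Proposition \ref{fait0} at time $2n$, which is precisely your final step. The argument and the handling of constants are correct and essentially identical to the paper's.
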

For the walk  $Y,$ we know that there  exists  $c_0>0$ such that  
$ \P^{\L^1}_0(Y_n=0) \sim \frac{c_0}{n^{1/2}}.$ In particular,  
\begin{eqnarray} \label{cellela} \exists c_1 >0 \ \forall n\geq 1 \ \  
\P^{\L^1}_0(Y_n=0) 
\geq \frac{c_1}{n^{\alpha}},
\end{eqnarray}
with $c_1\leq 1.$
So, for $\alpha >1/2$ we can find  $A>0$ and  $c_2>0 $ such that 
\begin{eqnarray}  \label{naif} \forall n\geq 1 \ \  
\P^{\L^1}_0(Y_n=0) \geq
\begin{cases}
\;\frac{1}{n^{\alpha}} &\text{if} \ n\geq A,\\
\;\frac{c_2}{n^{\alpha}} &\text{if} \ n<A,
\end{cases}
\end{eqnarray}
with  $c_2\leq 1$.  If we directly use the lower bound    (\ref{naif}) in  
(\ref{oufrate})    at time  $L_{x;2n}$, it appears a supplementary  factor 
 $c_2^{\#\{ x;\ 0< L_{x;2n}< A\}}$ on which we do not have control. \\
 So we put :
 $$N_{n,2}=\#\{x;\ L_{x;n}\geq 2\},$$
 which is the number of visited points at least twice by the walk $X$.
 And for  $\varepsilon_1, 
 \varepsilon_2>0,$ consider the following events :
 \begin{eqnarray*} 
 A_1&=& \{N_{2n}\leq \varepsilon_1\  n^{\frac{d}{d+2} } \ln(n)^{\frac{2}{d+2} } \},\\
A_2&=& \{N_{2n}\geq \varepsilon_1 \  n^{\frac{d}{d+2} }
\ln(n)^{\frac{2}{d+2} } \text{ and } 
 N_{2n,2}\geq  \varepsilon_2 \ n^{\frac{d}{d+2} } \ln(n)^{\frac{2}{d+2}
 }\},\\
   A_3&=& \{N_{2n}\geq \varepsilon_1\  n^{\frac{d}{d+2} }
 \ln(n)^{\frac{2}{d+2} } \text{ and } 
 N_{2n,2}\leq  \varepsilon_2 \ n^{\frac{d}{d+2} } \ln(n)^{\frac{2}{d+2}
 }\}.
 \end{eqnarray*}
We have 
 \begin{eqnarray} \label{add}
 \E_0^{\omega}(\prod_{x;\  L_{x;2n}>0}
 L_{x;2n}^{-\alpha} \  1_{\{X_{2n}=0\}} ) &=&
 \E_0^{\omega}(\prod_{x;\  L_{x;2n}>0}
 L_{x;2n}^{-\alpha} \  1_{\{X_{2n}=0\}} \ 1_{A_1} ) \\ \nonumber
  &+&
 \E_0^{\omega}(\prod_{x ;\ L_{x;2n}>0}
 L_{x;2n}^{-\alpha}  \ 1_{\{X_{2n}=0\}} \ 1_{A_2})   \\ \nonumber
&+& \E_0^{\omega}(\prod_{x;\  L_{x;2n}>0}
L_{x;2n}^{-\alpha} \  1_{\{X_{2n}=0\}} \  1_{A_3}).
 \end{eqnarray}
Let us examin these three 3 terms.
\begin{enumerate} [1.]
\item For the  term corresponding to  $A_1$, we write :
\begin{eqnarray*} 
  \E_0^{\omega}(\prod_{x;\  L_{x;2n}>0}
  L_{x;2n}^{-\alpha} \  1_{\{X_{2n}=0\}} \ 1_{A_1} )
 & = &  \E_0^{\omega}(\prod_{x;\  L_{x;2n}>0} 
 \frac{c_1}{L_{x;2n}^{\alpha}} \  \times 
 \prod_{x;\  L_{x;2n}>0}  \frac{1}{c_1} \times
 1_{\{X_{2n}=0\}} \ 1_{A_1} ) \\
 &\leq&
 \mathbb{E}_0^{\omega} 
   ( \underset {x; L_{x;2n}>0 }{\prod}
   \P_{0}^{\L^1}( Y_{L_{x;2n} }  =0)  \times 
   (\frac{1}{c_1})^{N_{2n}} \\
   &\ & \hspace{5.5cm} \times 1_{\{X_{2n}=0\}} \ 1_{A_1} ) \\
 &\ & \hspace{0.1cm}(\text{par } \ref{cellela} ) \\
 &\leq& 
 \mathbb{E}_0^{\omega} 
   ( \underset {x; L_{x;2n}>0 }{\prod}
   \P_{0}^{\L^1}( Y_{L_{x;2n} }  =0)  
    1_{\{X_{2n}=0\}}  )\\
    &\ & \hspace{4.8cm} \times   
    (\frac{1}{c_1})^{ \varepsilon_1\  n^{\frac{d}{d+2} } \ln(n)^{\frac{2}{d+2} }  } \\
 &\leq& e^{-(C +\varepsilon_1 \ln( c_1) ) n^{\frac{d}{d+2} } \ln(n)^{\frac{2}{d+2} } } . \\
 &\ & \hspace{0.1cm}(\text{by proposition  } \ref{avecproba} ) \\
 \end{eqnarray*}
Now,  choosing   $\varepsilon_1$ small enough   ( recall that $\ln(c_1)
\leq 0$),  we deduce that there exists a constant 
$C_1>0$ such that $Q$ a.s on  $|\C|=+\infty$, we have, 
\begin{eqnarray} \label{A_1borne}
\E_0^{\omega}(\prod_{x;\  L_{x;2n}>0}
L_{x;2n}^{-\alpha} \  1_{\{X_{2n}=0\}} \ 1_{A_1} )
\leq e^{- C_1 n^{\frac{d}{d+2} } \ln(n)^{\frac{2}{d+2} } } .
\end{eqnarray}
 \item For the second  term,  we notice that on the event $A_2$ the
 product $\prod_{x ;\ L_{x;2n}>0}
 L_{x;2n}^{-\alpha}$ is less than  $ (1/2)^{\varepsilon_2  n^{\frac{d}{d+2} }
 \ln(n)^{\frac{2}{d+2} } }$. Thus there exists a constant  $C_2>0$ such
 that,
   \begin{eqnarray} \label{A_2borne}
\E_0^{\omega}(\prod_{x;\  L_{x;2n}>0}
L_{x;2n}^{-\alpha} \  1_{\{X_{2n}=0\}} \ 1_{A_2} )
\leq e^{- C_2 n^{\frac{d}{d+2} } \ln(n)^{\frac{2}{d+2} } } .
\end{eqnarray}
 \item For the last  term, we use the following lemma:
 \begin{lem} 
 \label{lemmeana} There  exists  $\varepsilon'>0 $ such that for all  
  $\varepsilon>0$,  there  exists  a  constant $C_3>0$ such that, for all
   $ n,N \geq 0$,
\begin{eqnarray} \label{varepsineg} 
 \P_0^{\omega}( N_n \geq  \varepsilon N  \text{ et } N_{n,2} \leq \varepsilon' N ) \leq
  e^{- C_3 N } .
  \end{eqnarray}
  \end{lem}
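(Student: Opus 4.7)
The plan is to exploit the intrinsic laziness of the walk $X$ defined by (\ref{eq1}): at every vertex $a\in\C$, the transition law gives $\P^\omega_a(X_1=a) = 1/(\nu(a)+1) \ge p$, with $p := 1/(2d+1)$ a lower bound uniform in $\omega$ and in $a$. Hence each first visit to a new site is followed, with conditional probability at least $p$, by an immediate self-loop, which automatically turns that site into a multi-visited one. The constant $\varepsilon'$ will be chosen (once and for all) small enough compared to $p$ so that this mechanism alone forces $N_{n,2}$ to be of order $N$ whenever $N_n$ is, outside an event of exponentially small probability.

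To formalize, introduce the successive first-hitting times of new vertices $T_1:=0<T_2<T_3<\dots$, so that $N_n = \#\{k\ge 1 : T_k\le n\}$, and set
\[
\xi_k := \mathbf{1}\{X_{T_k+1} = X_{T_k}\}, \qquad k\ge 1.
\]
Since each $T_k$ is a stopping time, $\P(\xi_k=1 \mid \mathcal{F}_{T_k}) \ge p$ almost surely. The vertices $X_{T_1}, X_{T_2},\dots$ are pairwise distinct by construction, so each index $k$ with $\xi_k=1$ contributes a \emph{different} element to the set of sites visited at least twice. On $\{N_n \ge \varepsilon N\}$, every $k \le \lfloor \varepsilon N\rfloor - 1$ satisfies $T_k+1 \le T_{\lfloor \varepsilon N\rfloor} \le n$, which yields the pointwise comparison
\[
N_{n,2} \;\ge\; \sum_{k=1}^{\lfloor \varepsilon N\rfloor - 1} \xi_k \qquad\text{on } \{N_n \ge \varepsilon N\}.
\]

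A standard filtration argument --- iterating the conditional estimate $\E[e^{-\lambda \xi_k}\mid\mathcal{F}_{T_k}] \le 1-p(1-e^{-\lambda})$ --- shows that $\sum_{k=1}^m \xi_k$ stochastically dominates a $\text{Binomial}(m,p)$ variable, and the exponential Chebyshev inequality gives, for $s\le pm$,
\[
\P\!\Big(\sum_{k=1}^m \xi_k \le s\Big) \;\le\; \exp\!\Big(-m\, I\!\big(\tfrac{s}{m}\,\big\|\,p\big)\Big),
\]
with $I(\cdot\|p)$ the usual Cram\'er rate function. Applying this with $m = \lfloor \varepsilon N\rfloor - 1$ and $s = \varepsilon' N$ produces the desired bound $\P^\omega_0(N_n\ge \varepsilon N,\, N_{n,2}\le \varepsilon' N) \le e^{-C_3 N}$ as soon as $\varepsilon'/\varepsilon < p$, with $C_3 = C_3(\varepsilon,\varepsilon',p)>0$. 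The constant is uniform in the percolation configuration $\omega$ because $p$ depends only on $d$.

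The only delicate point is that the $\xi_k$ are not independent but merely stochastically dominate i.i.d.\ Bernoullis of parameter $p$; the filtration/exponential-moment computation above is the standard way to bypass this. The remainder is routine optimization in $\lambda$ together with a choice of $\varepsilon'$ small enough (for instance $\varepsilon' \le p/3$) that the Chernoff threshold is beaten for the range of $\varepsilon$ relevant to the application in (\ref{add}).
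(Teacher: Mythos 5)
Your overall architecture is the same as the paper's: attach to each first-visit time a marking event whose conditional probability is bounded below uniformly in the past and in $\omega$, observe that distinct marked vertices give distinct elements of $\{x:\ L_{x,n}\ge 2\}$, and conclude with an exponential-moment/Chernoff estimate along the filtration of the hitting times, intersected with $\{N_n\ge \varepsilon N\}$. The genuine gap is in the marking event itself. You take $\xi_k=\mathbf{1}\{X_{T_k+1}=X_{T_k}\}$ and justify $\P(\xi_k=1\mid \mathcal{F}_{T_k})\ge 1/(2d+1)$ by the holding term in (\ref{eq1}). But the measure $\P_0^{\omega}$ appearing in this lemma is the law of the walk used throughout Section 3, namely the projection on $\C^g$ of the wreath-product walk $Z$ of Proposition \ref{fait0}: both the verbal description of $Z$ and the kernel (\ref{ptilde}) (the factor $\omega(a,b)$, the normalization $\nu(a)$ rather than $\nu(a)+1$, the reversible measure $\tilde{m}(a,f)=\nu(a)$) show that this walk jumps at every step uniformly onto a neighbor and never stays put; this is precisely why the paper's own proof works with the two-step return $X_{\tau_k+2}=X_{\tau_k}$ and bounds its probability by $\delta^2$ with $\delta\ge 1/2d$, instead of a one-step hold. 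For that walk $\P_a^{\omega}(X_1=a)=0$, so your key inequality is false, the $\xi_k$ vanish identically, and the binomial domination yields nothing.

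The repair is exactly the paper's device: replace the hold by the backward-and-forward event $X_{T_k+2}=X_{T_k}$, which still forces a second visit to the pairwise distinct vertices $X_{T_k}$ and has conditional probability at least $(1/2d)^2$ by bounded valency. The only price is measurability: this event is known only at time $T_k+2$, so when iterating the conditional exponential moment one must drop one index per step (the paper conditions on $\mathcal{F}_L$, discards $\epsilon_{L-1}$, and ends with an exponent $\lfloor L/2\rfloor$); with that modification your stochastic-domination and Chernoff computation, as well as the inclusion of events on $\{N_n\ge\varepsilon N\}$, go through verbatim. A smaller point: your condition $\varepsilon'/\varepsilon<p$ makes $\varepsilon'$ depend on $\varepsilon$, which strictly contradicts the quantifier order ``there exists $\varepsilon'$ such that for all $\varepsilon$'' in the statement. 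This is not really held against you: the paper's final step (applying its bound, proved for the threshold $\varepsilon' L$, with $L=\varepsilon N$ but threshold $\varepsilon' N$) hides the same dependence, and the application to the event $A_3$ only needs ``for each $\varepsilon$ there is an $\varepsilon'$''; but be aware that this weaker form is what both arguments actually deliver.
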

 \begin{proof}
 $\ $ \\
 \begin{enumerate} [$\bullet$]
 \item Let  $\tau_0=0 $  and for  $k\geq 1$ let,
 $$ \tau_k=\min\{s\geq \tau_{k-1};\ X_s\not\in \{X_0,X_1,...,X_{s-1}\} \   \}.$$
 The  $\tau_k$ represent  instants when the walk $X$ visits a new  point. 
 Consider now, the variables  $\epsilon_k$ defined by :
 \begin{eqnarray} \label{epsilonk}
 \epsilon_k= \begin{cases}
 \; 1 & \text{ if } X_{\tau_k}=X_{\tau_k+2}, \\
\; 0 & \text{ otherwise.}
 \end{cases}
 \end{eqnarray}
These variables have the following interpretation,  $\epsilon_k$   is
equal to  $1$ only when the new visited point   $X_{\tau_k}$ is
immediatly re visited after a backward and foward. The  $\epsilon_k$  are
not independent but  their laws are all some Bernouilli  with different
parameters. Besides, these parameters have a same lower  bound
$\delta>0$,  since the graph  $\C^g$  has bounded valency. 
\item Consider  the following filtrations, 
$$\G_m=\sigma( X_j;\ 0\leq j\leq m),$$
$$\F_m=\sigma( X_j;\ 0\leq j\leq \tau_m).$$
 $\epsilon_k $  are  $\G_{2+\tau_k}$ measurable and so  $\F_{k+2}$
 measurable. For all  $\lambda>0$ and for all
$L>0$, 
we can write,
 \begin{eqnarray} \nonumber
 \E_0^{\omega} (e^{-\lambda \sum_{k=1}^L   \epsilon_k }) &=&
 \E_0^{\omega} (e^{-\lambda \sum_{k=1}^{L-2}   \epsilon_k }   \ 
 \E_0^{\omega} (e^{-\lambda   (\epsilon_{L-1}
 +\epsilon_L )} |\F_{L}) \ ) \\
 &\leq & \label{onvaitere}\E_0^{\omega} (e^{-\lambda \sum_{k=1}^{L-2}   \epsilon_k }   \ 
 \E_0^{\omega} (e^{-\lambda   \epsilon_L }
 |\F_{L}) \ ) .
 \end{eqnarray}
\item For the term  
$\E_0^{\omega} (e^{-\lambda    \epsilon_L } |\F_{L}), \ $ we have:
 \begin{eqnarray} \nonumber
 \E_0^{\omega} (e^{-\lambda    \epsilon_L } |\F_{L}) &=&
 e^{-\lambda} \P_0^{\omega} (\epsilon_L=1|\F_{L} ) + 
 \P_0^{\omega} (\epsilon_L=0|\F_L) \\ \label{leterme}
 &=& 1 + (e^{-\lambda} -1) \P_0^{\omega} (\epsilon_L=1|\F_{L} ).
 \end{eqnarray}
 
Now, we want a lower bound of  $\P_0^{\omega} (\epsilon_L=1|\F_{L} )$.
 We have successively:
  \begin{eqnarray} \nonumber
  \P_0^{\omega} (\epsilon_L=1|\F_{L} ) &=&
  \P_0^{\omega} (\epsilon_L=1|X_{\tau_L}  ) \\
  & \ & \text{( Markov property)} \nonumber \\
  &=& \nonumber \underset{x;    \  \P_0^{\omega} (X_{\tau_L}
  =x) >0              }{\sum} 1_{\{X_{\tau_L}
  =x\}} \ 
  \P_0^{\omega} (\epsilon_L=1| X_{\tau_L} =x) \\
  &\geq& \label{aaa}\delta ^2.
 \end{eqnarray}
Last inequality comes from the fact that the 
 graph $\C^g$ has bounded  valency, so in each point 
  $x$ the  probability to do a backward and foward is greater than 
  $\delta^2$ (with 
 $\delta \geq 1/2d$).
\item So, we deduce from  (\ref{leterme}) and  (\ref{aaa}) that,
 $$   \E_0^{\omega} (e^{-\lambda    \epsilon_L } |\F_{L}) \leq
 1 + (e^{-\lambda} -1)\delta^2. $$
Iterating  (\ref{onvaitere}), we get,
 \begin{eqnarray} \label {ineginte}
 \E_0^{\omega} (e^{-\lambda \sum_{k=1}^L   \epsilon_k })
 \leq  (1 + (e^{-\lambda} -1)\delta^2)^{ \lfloor
 L/2 \rfloor},
 \end{eqnarray}
where $\lfloor a \rfloor$ stands for the   whole number portion of $a$.
 Let: $$a_{\lambda} =-\ln( 1 + (e^{-\lambda} -1)\delta^2 )>0.$$  By 
 Bien-aymé inequality, we deduce,
 $$
 \P_0^{\omega} ( \sum_{k=1}^L  \epsilon_k \leq \varepsilon' L) 
  \leq
   e^{ \varepsilon'\lambda L -a_{\lambda} \lfloor
   L/2  \rfloor}.$$
Using  $\lfloor L/2 \rfloor$ for  $L\geq 2,\ L\leq 3$, we get :
   $$ \P_0^{\omega} ( \sum_{k=1}^L  \epsilon_k \leq \varepsilon' L) 
    \leq 
    e^{-\lfloor L /2\rfloor (a_{\lambda } -3
   \lambda \varepsilon') }.$$
Note that this last inequality is still valid for  $L=1$.\\
 Fix  $\lambda>0$, ( by  example $\lambda=1$) then we can choose
 $\varepsilon'$ small enough such that  $
 a_{\lambda} -3\varepsilon'>0$.
We deduce the existence  of  constant  $b$ 
 such that :
 \begin{eqnarray} \label{souhait}
 \P_0^{\omega} ( \sum_{k=1}^L   \epsilon_k \leq \varepsilon' L) 
 \leq  e^{-b L}.
 \end{eqnarray}
 \item Now, notice that 
 $$ \{ N_n \geq  \varepsilon N  \text{ et } N_{n,2} \leq \varepsilon' N \}
 \subset \{ \sum_{k=1}^{\varepsilon N}   \epsilon_k \leq \varepsilon' N  \}.$$
Indeed,  first if  $N_n\geq  \varepsilon N $ that means that at least 
  $  \varepsilon N $  new points have been visited. Secondly if  there
  are less than  $ \varepsilon' N$ points visited more than twice then
  there are less than  
   $ \varepsilon' N$  points which have  been immediatly visited after their
   first visit. Finaly we have: 
 $$\P_0^{\omega}(N_n \geq  \varepsilon N  \text{ and }
  N_{n,2} \leq \varepsilon' N ) \leq  e^{-\varepsilon b N}.$$
  \end{enumerate}

 \end{proof}
We can now get an upper bound of the term
corresponding to  $A_3$  The  product is less than
 $1$, so we can write: 
 \begin{eqnarray*}
 \E_0^{\omega}(\prod_{x;\  L_{x;2n}>0}
 L_{x;2n}^{-\alpha} \  1_{\{X_{2n}=0\}} \ 1_{A_3} )
\leq \P_0(A_3)  
\end{eqnarray*}
Let $\varepsilon_1$ small enough satisfying the
first point (event $A_1$ ),  lemma \ref{lemmeana} 
 with $\varepsilon=\varepsilon_1$  give us the
 existence of $\varepsilon'$ 
 such that  (\ref{varepsineg}). Then we take 
 $\varepsilon_2=\varepsilon'$ and we deduce there 
 exists a constant $C_3>0$  such that,
 \begin{eqnarray*}
 \P_0^{\omega}(A_3) \leq 
 e^{- C_3   n^{\frac{d}{d+2} } \ln(n)^{\frac{2}{d+2} }   } .
  \end{eqnarray*}
So,
 \begin{eqnarray} \label{A_3borne}
 \E_0^{\omega}(\prod_{x;\  L_{x;2n}>0}
 L_{x;2n}^{-\alpha} \  1_{\{X_{2n}=0\}} \ 1_{A_3} )
\leq 
 e^{- C_3   n^{\frac{d}{d+2} } \ln(n)^{\frac{2}{d+2} }   } .
 \end{eqnarray} 
 \end{enumerate}

 Finaly, we deduce  from (\ref{A_1borne})
 (\ref{A_2borne}) and  (\ref{A_3borne}), the
 following property.
\begin{prop} \label{propindic} 
$Q$ a.s on  $|\C|=+\infty$ and for large enough 
$n$, for all  $\alpha >1/2,$
\begin{eqnarray*}  \mathbb{E}_0^{\omega} 
   ( \underset {x; L_{x;2n}>0 }{\prod} 
	\frac{1}{  L_{x;2n}^{\alpha} }  
	\ \        
1_{       \{  X_{2n}=0 \}        }         )  
 \preceq
  e^{-n^{\frac{d}{d+2} } \ln(n)^{\frac{2}{d+2} } } .
  \end{eqnarray*} 
\end{prop}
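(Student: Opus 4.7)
My plan is to reduce Proposition \ref{propindic} to Proposition \ref{avecproba} together with deterministic control of the number of visited sites. The key point is that Proposition \ref{avecproba} gives the desired bound with $\prod_x \P_0^{\L^1}(Y_{L_{x,2n}}=0)$ in place of $\prod_x L_{x,2n}^{-\alpha}$, and for $\alpha>1/2$ the local limit theorem on $\Z$ yields $\P_0^{\L^1}(Y_m=0)\geq c_1/m^{\alpha}$ with $c_1\leq 1$ (inequality \ref{cellela}). The naive substitution produces a parasitic factor $(1/c_1)^{N_{2n}}$, which is acceptable only when $N_{2n}$ is not too big.

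I would split $\E_0^{\omega}(\prod_{x;L_{x,2n}>0} L_{x,2n}^{-\alpha}\,1_{\{X_{2n}=0\}})$ according to the three events $A_1$, $A_2$, $A_3$ already introduced in the excerpt (a dichotomy on whether $N_{2n}$ is small, and when large whether $N_{2n,2}$ is large or small), and treat each term separately. On $A_1$, I pay the price $(1/c_1)^{N_{2n}}\leq \exp(\varepsilon_1 |\ln c_1|\, n^{d/(d+2)}\ln(n)^{2/(d+2)})$ to replace each $L_{x,2n}^{-\alpha}$ by $\P_0^{\L^1}(Y_{L_{x,2n}}=0)/c_1$ (using \ref{cellela}), then apply Proposition \ref{avecproba}; choosing $\varepsilon_1$ small enough compared to the constant $C$ of \ref{oufrate} gives the bound \ref{A_1borne}. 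On $A_2$, each of the at least $\varepsilon_2\, n^{d/(d+2)}\ln(n)^{2/(d+2)}$ points $x$ with $L_{x,2n}\geq 2$ contributes a factor at most $2^{-\alpha}$ to the product, so the product itself is bounded by $\exp(-c\, n^{d/(d+2)}\ln(n)^{2/(d+2)})$ without even using $\{X_{2n}=0\}$. On $A_3$, I bound the product trivially by $1$ and control $\P_0^{\omega}(A_3)$ using Lemma \ref{lemmeana}, which is exactly tailored for this: with $\varepsilon_1$ already fixed by the $A_1$-step, the lemma supplies an $\varepsilon_2=\varepsilon'$ so that $\P_0^{\omega}(A_3)\leq \exp(-C_3 n^{d/(d+2)}\ln(n)^{2/(d+2)})$.

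Summing the three estimates \ref{A_1borne}, \ref{A_2borne}, \ref{A_3borne} proves the proposition, with the $\Q$-a.s.\ statement inherited from Proposition \ref{avecproba} (only that step uses randomness of the cluster, since the $A_1,A_2,A_3$ estimates are $\omega$-pointwise once the constants are fixed).

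The main obstacle, in my view, is the $A_3$ term and its companion Lemma \ref{lemmeana}. One needs a Bernstein-type exponential tail for the event $\{N_n\geq \varepsilon N,\ N_{n,2}\leq \varepsilon' N\}$ uniformly in the cluster. The natural martingale-style approach is to build random variables $\epsilon_k\in\{0,1\}$ indicating whether the $k$-th newly discovered site is immediately re-visited via a back-and-forth (so $\sum_{k\leq N_n}\epsilon_k\leq N_{n,2}$), bound the conditional probability $\P(\epsilon_k=1\mid \F_{\tau_k})\geq \delta^2$ from below using bounded valency of $\C^g$ (here $\delta\geq 1/(2d)$), and iterate a one-step Laplace transform estimate $\E[e^{-\lambda \epsilon_L}\mid\F_L]\leq 1+(e^{-\lambda}-1)\delta^2$. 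A Chernoff bound then yields the desired exponential tail provided one chooses $\lambda$ fixed and $\varepsilon'$ small enough so that $-\ln(1+(e^{-\lambda}-1)\delta^2)-3\lambda\varepsilon'>0$. Once this lemma is in hand the rest of the proof is bookkeeping.
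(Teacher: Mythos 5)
Your proposal follows essentially the same route as the paper: the same decomposition over the events $A_1$, $A_2$, $A_3$, the same use of Proposition \ref{avecproba} together with the lower bound (\ref{cellela}) on $A_1$ (absorbing the factor $(1/c_1)^{N_{2n}}$ by taking $\varepsilon_1$ small), the same trivial bound of the product by $2^{-\alpha N_{2n,2}}$ on $A_2$, and the same control of $\P_0^{\omega}(A_3)$ via Lemma \ref{lemmeana} with the back-and-forth indicators $\epsilon_k$, the $\delta^2$ bound from bounded valency, and a Chernoff-type iteration. The argument is correct and matches the paper's proof.
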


To get the upper bound of the second point of
 Theorem  \ref{ext}, it remains to take
out  the indicatrice $1_{       \{  X_{2n}=0 \}}$.  We use the same  way as in
the section \ref{f1}. We prove:

\begin{lem}  \label{bidule'} For all  $m\geq 0$, we have:
$$\P_0^{\omega} (\sum_x \ln( L_{x;n}) =m )^2
\leq 2d(2m+1)^d \  \P_0^{\omega} (    \sum_x
\ln(L_{x;2n} )\leq 2m  \text{ et } X_{2n}=0
).$$
\end{lem}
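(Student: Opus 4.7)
My plan is to follow the proof of Lemma \ref{bidule} almost verbatim, substituting $\ln L_{x;n}$ for $L_{x;n}^{\alpha}$ throughout. The argument decomposes into (a) a Cauchy--Schwarz step producing the prefactor $2d(2m+1)^d$, (b) a reversibility step for the simple random walk on $\C^g$, (c) a Markov concatenation at time $n$, and (d) a subadditivity step for $\ln$. Only (d) is genuinely new; the rest is a mechanical adaptation.

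First I would partition by the endpoint, writing $\P_0^{\omega}(\sum_x \ln L_{x;n}=m)=\sum_h \P_0^{\omega}(\sum_x \ln L_{x;n}=m;\,X_n=h)$, restrict $h$ to a ball $B_m(\C)$ (justifying this by a bound on $|X_n|$ on the relevant event, as in the proof of Lemma \ref{bidule}), write $1=\sqrt{\nu(h)}\cdot 1/\sqrt{\nu(h)}$, and apply Cauchy--Schwarz to extract the factor $\nu(B_m(\C))\leq 2d(2m+1)^d$. Then I would use reversibility,
\[
\tfrac{1}{\nu(h)}\P_0^{\omega}(\ldots,X_n=h)=\tfrac{1}{\nu(0)}\P_h^{\omega}(\ldots,X_n=0),
\]
to rewrite the squared probability as a product, and the strong Markov property at time $n$ to concatenate the two $n$-step pieces into a single $2n$-step excursion from $0$ to $0$ satisfying $\sum_x \ln L_{x;n}=m$ on $[0,n]$ and $\sum_x \ln L_{x;[n,2n]}=m$ on $[n,2n]$, exactly as in the $\alpha$-case.

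The crux is the set inclusion
\[
\{\textstyle\sum_x \ln L_{x;n}=m,\ \sum_x \ln L_{x;[n,2n]}=m\}\subset\{\sum_x \ln L_{x;2n}\leq 2m\}.
\]
Since $L_{x;2n}=L_{x;n}+L_{x;[n,2n]}$, this asks for the logarithmic subadditivity $\ln(a+b)\leq \ln a+\ln b$ at each $x$, and this is the main obstacle. The inequality holds cleanly when $a,b\geq 2$ (equivalent to $(a-1)(b-1)\geq 1$) and is an equality when one of $a,b$ vanishes, but it fails when both equal $1$, where only the weaker $\ln(a+b)\leq \ln 2+\ln a+\ln b$ is available. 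I would handle this by splitting the sum into these three regimes; the resulting additive correction, of order $\ln 2$ times the number of $x$ with $L_{x;n}\wedge L_{x;[n,2n]}\geq 1$, is bounded by $\ln 2\cdot N_{2n,2}$ and can be absorbed into the polynomial prefactor $2d(2m+1)^d$ by a mild adjustment of constants, or alternatively eliminated by replacing $\ln L_{x;n}$ with $\ln(L_{x;n}+1)$, which is a comparable functional. Once subadditivity is in hand, combining it with the Cauchy--Schwarz, reversibility and Markov steps yields the stated inequality exactly as in Lemma \ref{bidule}.
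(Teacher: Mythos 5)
You follow the paper's route (Cauchy--Schwarz over the endpoint, reversibility, Markov splitting at time $n$, then a subadditivity step), and your diagnosis of the crux is correct: the inequality $\ln(a+b)\le\ln a+\ln b$ is exactly what the paper's own one-line proof invokes, and it fails whenever one of $a,b$ equals $1$ (not only when both do: $a=1$, $b\ge2$ gives $\ln(1+b)>\ln b$). In fact the lemma as stated already fails at $m=0$: the left side is positive (a self-avoiding stretch of length $n$ has positive probability), while $X_{2n}=0$ forces $L_{0;2n}\ge2$, hence $\sum_x\ln L_{x;2n}\ge\ln2>0$ and the right-hand probability vanishes. The problem with your proposal is that neither repair closes this gap. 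The correction coming from $\ln(a+b)\le\ln2+\ln a+\ln b$ is additive \emph{inside the event}, not multiplicative on the probability: it replaces the target event $\{\sum_x\ln L_{x;2n}\le2m\}$ by $\{\sum_x\ln L_{x;2n}\le2m+\ln2\cdot N_{2n,2}\}$, and on the event $\{\sum_x\ln L_{x;n}=m,\ \sum_x\ln L_{x;[n,2n]}=m\}$ the quantity $N_{2n,2}$ is in no way controlled by $m$ (both halves can be injective, so $m=0$, while overlapping on order $n$ sites, each contributing $\ln2$). No ``mild adjustment of constants'' in the prefactor $2d(2m+1)^d$ can convert a bound for the larger event into one for the smaller event.

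There is a second step you import from Lemma \ref{bidule} that also breaks for the logarithm and that you did not flag: restricting the endpoint to $B_m(\C)$ uses that every visited site contributes at least $1$ to $\sum_x L_{x;n}^{\alpha}$, so that the number of visited sites, hence $D(0,X_n)$, is at most $m$; for $\sum_x\ln L_{x;n}$ the sites visited exactly once contribute $0$, so $X_n$ need not lie in $B_m(\C)$. Your alternative fix, working with $\ln(1+L)$ instead of $\ln L$, does cure both defects simultaneously (one has $\ln(1+a+b)\le\ln(1+a)+\ln(1+b)$ for all $a,b\ge0$, and each visited site then contributes at least $\ln2$, giving $X_n\in B_{m/\ln2}(\C)$ and a prefactor of the same polynomial type), but it proves a modified lemma, and the claim that the functional is ``comparable'' is not a one-line matter: to return to $\prod_x L_{x;n}^{-\alpha}$ you must pay a factor $2^{\alpha N_n}$, which needs its own argument (a splitting on the size of $N_n$, in the spirit of the events $A_1,A_2,A_3$ used earlier in the paper) before it can be absorbed at the scale $e^{-n^{d/(d+2)}\ln(n)^{2/(d+2)}}$. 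So the proposal correctly identifies the weak point of the paper's own proof, but as written it does not establish the stated inequality; the honest conclusion is that the lemma needs to be restated (for instance with $\ln(1+L_{x;n})$) and the surrounding argument adjusted accordingly.
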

The proof is similar to lemma  \ref{bidule}.
We use in  particular :
$$  \ln(L_{x;2n}) \leq
\ln(L_{x;n} +L_{x;[n;2n]}) \leq \ln(L_{x;n})
+\ln(L_{x;[n;2n]}).$$
Then
\begin{eqnarray*}
\E_0^{\omega}(     \prod_x 
L_{x;2n }^{-\alpha}        1_{\{ X_{2n}=0   \}}  ) &=& 
\E_0^{\omega}(    e^{-\alpha \sum_x \ln(L_{x;2n
})        } 1_{\{ X_{2n}=0   \}}  ) 
\\
&=&
\underset{  m \geq 1}{\sum}   e^{-\alpha m} \ 
\P_0^{\omega}(    \sum_x \ln(L_{x;2n })   =m          ; X_{2n}=0 )\\
&=&  (1-e^{-\alpha }) \underset{  m \geq 1}{\sum}
  e^{-\alpha m} \
\P_0^{\omega}(     \sum_x \ln(L_{x;2n })  \leq m ;
X_{2n}=0 ).\\
 & \geq & 
  (  1- 
  e^{   - \alpha  }   
   )  
 \underset{m \geq 1}{\sum}  
 e^{-2\alpha m}  \P_0^{\omega}(     \sum_x
 \ln(L_{x;2n })  \leq 2m ; X_{2n}=0 )\\
&\geq& (  1- 
  e^{   - \alpha}   
   ) 
    \underset{  m \geq 1}{\sum} \frac{1}{2d
(2m+1)^d}e^{-2\alpha m} \ 
[\P_0^{\omega}(     \sum_x \ln( L_{x;n }) =m)]^2 \\
&\ & (\text{by  lemma } \ref{bidule'} ) \\
&\geq&      \underset{  m \geq 1}{\sum}
e^{-\alpha_1  m}  [\P_0^{\omega}
(\sum_x \ln(L_{x;n })  =m)]^2 \\
&\ & (\text{for some $\alpha_1 > 2\alpha $ }) \\
&\geq&   \Bigl( \underset{  m \geq 1}{\sum}
e^{- \alpha_1 m}
\Bigr)^{-1} \times \\
& \ & \hspace{2.8cm}\Bigl(
\underset{  m \geq 1}{\sum} e^{-\alpha_1 m
}\  \P_0^{\omega}(\sum_x \ln(L_{x;n }) =m) 
\Bigr)\\
&\ & \hspace*{0cm} (\text{by Cauchy-Schwarz
inequality})\\
&\geq&  c\ \E_0^{\omega} [e^{ -\alpha_1  \sum_x
\ln(L_{x;n } ) }  ] \\
&=&c
\E_0^{\omega}(     \prod_x 
L_{x;n }^{-\alpha_1}             )
.\end{eqnarray*}

So, with this last inequality and with proposition  
\ref{propindic},  we obtain the expected upper
bound for some value $\alpha_1$:
\begin{eqnarray} \label{preli'} \E_0^{\omega}(     \prod_x 
L_{x;n }^{-\alpha_1}             ) \preceq 
e^{-n^{\frac{d}{d+2} } \ln(n)^{\frac{2}{d+2} }
} .\end{eqnarray}
From this inequality at point  $\alpha_1$, we
extend this relation for all $\alpha >1/2$.
Let  $\alpha >1/2$. \\
 -If $\alpha \geq \alpha_1 $,  we can replace in (\ref{preli'})
  $\alpha_1$ by $\alpha$, by monotony	 in
  $\alpha$.\\
-If $\alpha < \alpha_1 $, we write 
\begin{eqnarray*} 
\E_0^{\omega} [  e^{-\alpha  \underset{
x;L_{x;n} >0}{\sum}  \ln(L_{x;n})    }
  ]
  &=& 
  \E_0^{\omega} [  (e^{-\alpha_1 \underset{
  x;L_{x;n} >0}{\sum}  \ln(L_{x;n})   }
   )^{\frac{\alpha}{\alpha_1 }}] \\
  &\leq & 
  (\E_0^{\omega} [  e^{-\alpha_1  \underset{
  x;L_{x;n} >0}{\sum}  \ln(L_{x;n})  }
  ]
   )^{\frac{\alpha }{\alpha_1 }} \\
  &\ & 
  \text{ ( Jensen inequality to
  concave function }
   \  x \rightarrow 
  x^{   \frac{\alpha}{\alpha_1}  } .)\\
  &\preceq & e^{-n^{\eta}} .
\end{eqnarray*}
So we have proved :
\begin{prop}  \label{f21}
$Q$ a.s  on the set  $|\C|=+\infty$ and for large
enough  $n$, for all $\alpha >1/2,$
\begin{eqnarray*}  \mathbb{E}_0^{\omega} 
   ( \underset {x; L_{x;n}>0 }{\prod} 
	\frac{1}{  L_{x;n}^{\alpha} }  
	\ \        
        )  
 \preceq
  e^{-n^{\frac{d}{d+2} } \ln(n)^{\frac{2}{d+2} } } .
  \end{eqnarray*} 
\end{prop}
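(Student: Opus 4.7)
The plan is to reduce the statement to Proposition \ref{propindic} (which gives the same bound but with the extra indicator $\mathbf{1}_{\{X_{2n}=0\}}$) by a reversibility/Cauchy--Schwarz argument in the spirit of the proof of Proposition \ref{1a}, working with $\ln L_{x,n}$ instead of $L_{x,n}^\alpha$, and then to propagate the resulting estimate to every $\alpha>1/2$ by a monotonicity/Jensen argument.

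First I would establish a logarithmic analogue of Lemma \ref{bidule}: for every $m\ge 0$,
$$\P_0^\omega\Bigl(\sum_x \ln L_{x,n}=m\Bigr)^2 \le 2d(2m+1)^d\,\P_0^\omega\Bigl(\sum_x \ln L_{x,2n}\le 2m,\; X_{2n}=0\Bigr).$$
The proof follows the pattern of Lemma \ref{bidule}: decompose $\P(\sum_x \ln L_{x,n}=m)$ as a sum over $h\in B_m(\C)$ of $\P(\sum_x \ln L_{x,n}=m,\;X_n=h)$, apply Cauchy--Schwarz with weights $\sqrt{\nu(h)}$ (using $\nu(B_m(\C))\le 2d(2m+1)^d$), then use reversibility to turn $0\to h$ into $h\to 0$. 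The subadditivity $\ln L_{x,2n}\le \ln(L_{x,n}+L_{x,[n,2n]})\le \ln L_{x,n}+\ln L_{x,[n,2n]}$ ensures that the event $\{\sum_x\ln L_{x,n}=m\}\cap\{\sum_x\ln L_{x,[n,2n]}=m\}$ is contained in $\{\sum_x\ln L_{x,2n}\le 2m\}$.

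Next I would rewrite the bounded expectation of Proposition \ref{propindic} as a Laplace transform,
$$\E_0^\omega\Bigl[e^{-\alpha\sum_x \ln L_{x,2n}}\mathbf{1}_{\{X_{2n}=0\}}\Bigr]=(1-e^{-\alpha})\sum_{m\ge 1}e^{-\alpha m}\,\P_0^\omega\Bigl(\sum_x\ln L_{x,2n}\le m,\;X_{2n}=0\Bigr),$$
restrict the sum to even indices to introduce $2m$, apply the logarithmic Lemma, and conclude by one more Cauchy--Schwarz that for some $\alpha_1>2\alpha$ one has
$$\E_0^\omega\Bigl[\prod_x L_{x,2n}^{-\alpha}\mathbf{1}_{\{X_{2n}=0\}}\Bigr]\ \ge\ c\,\E_0^\omega\Bigl[\prod_x L_{x,n}^{-\alpha_1}\Bigr].$$
Combined with Proposition \ref{propindic} this yields the announced upper bound, but only at the single exponent $\alpha_1$.

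Finally I would extend to every $\alpha>1/2$. For $\alpha\ge \alpha_1$ the factors $L_{x,n}^{-\alpha}$ are pointwise smaller, so the bound is automatic by monotonicity. For $\alpha\in(1/2,\alpha_1)$, Jensen's inequality applied to the concave map $x\mapsto x^{\alpha/\alpha_1}$ gives
$$\E_0^\omega\Bigl[\prod_x L_{x,n}^{-\alpha}\Bigr]=\E_0^\omega\Bigl[\Bigl(\prod_x L_{x,n}^{-\alpha_1}\Bigr)^{\alpha/\alpha_1}\Bigr]\le \Bigl(\E_0^\omega\Bigl[\prod_x L_{x,n}^{-\alpha_1}\Bigr]\Bigr)^{\alpha/\alpha_1},$$
and since $\alpha/\alpha_1>0$, the bound $e^{-n^{d/(d+2)}\ln(n)^{2/(d+2)}}$ survives up to an irrelevant multiplicative constant in the exponent (absorbed into $\preceq$). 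The main technical point is the logarithmic analogue of Lemma \ref{bidule}; everything else is a mechanical repetition of the scheme already used for $\sum L_{x,n}^\alpha$ in subsection \ref{f1}, with the crucial input being the subadditivity of $\ln$ that replaces the subadditivity $(a+b)^\alpha\le a^\alpha+b^\alpha$ used there.
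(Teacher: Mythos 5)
Your proposal is correct and follows essentially the same route as the paper: a logarithmic analogue of Lemma \ref{bidule} (this is exactly the paper's Lemma \ref{bidule'}), the same Laplace-transform/reversibility/Cauchy--Schwarz chain to remove the indicator $1_{\{X_{2n}=0\}}$ from Proposition \ref{propindic} at the cost of passing to some exponent $\alpha_1>2\alpha$, and the same monotonicity (for $\alpha\ge\alpha_1$) and Jensen with $x\mapsto x^{\alpha/\alpha_1}$ (for $1/2<\alpha<\alpha_1$) to reach every $\alpha>1/2$. No gaps to report.
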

{\bf{Lower bound }}\\

By concavity of the function $\ln$, we get: 
\begin{eqnarray*}
 \underset{ z;L_{z;n} >0}{\prod }  L_{z;n}^{-\alpha}  &= &
 e^{-\alpha  N_n  \underset{ z;L_{z;n} >0}{\sum }
 \frac{1}{N_n}ln( L_{z;n})} \\
 &\geq&  
   e^{-\alpha  N_n  ln( \underset{ z;L_{z;n} >0}{\sum } \frac{
   L_{z;n} } {N_n})} \\
                           &=&	
e^{-\alpha  N_n  ln( \frac{n }{N_n})}			 .       
\end{eqnarray*}
On the event $\{ \underset{0\leq i \leq n}{\sup} D(0,X_i)
 \leq m  \}$, it comes that :\\
  $$N_n\leq |B_m(\C)| \leq c \ m^d,$$
  and
$$\frac{n}{N_n} \geq \frac{n}{cm^d}.$$
Since  function  $x\mapsto \frac{ln(x)}{x}$ is
decreasing on  $[e, +\infty]$,  if we choose  $m$
such that  
 \begin{eqnarray}\label{m} \frac{n}{cm^d} \geq e ,
 \end{eqnarray}
  then we can write  :
\begin{eqnarray*}
 E_0( \underset{ x;L_{x;n} >0}{\prod }  L_{x;n}^{-\alpha}  \ \ 
  ) 
 &\geq&
e^{-\alpha cm^dln(\frac{n}{cm^d})} \P_0(\underset{0\leq i \leq n}{\sup} |X_i| 
\leq m 
    ).\\
\end{eqnarray*}
Then by using  (\ref{deb}), we deduce :
\begin{eqnarray*}
 E_0( \underset{ x;L_{x;n} >0}{\prod }  L_{x;n}^{-\alpha}  \ \
  ) 
 \geq
e^{-\alpha cm^d ln(\frac{n}{cm^d})}  
e^{-c(m+\frac{n}{m^2}  )} .
\end{eqnarray*}
Taking   $m=(\frac{n}{ln(n)})^{\frac{1}{d+2}}$,
inquality (\ref{m}) is well satisfied for large
enough  $n$.\\
Finaly, for large enough  $n$ we obtain,
$$ E_0( \underset{ x;L_{x;n} >0}{\prod }  L_{x;n}^{-\alpha}   ) 
 \succeq  e^{-n^{\frac{d}{d+2}}  ln(n)^{\frac{2}{d+2} }}  .$$  
 So, 
 \begin{prop} \label{f22}For all $\alpha > 1/2$, $Q$ a.s on
 the set  $|\C|=+\infty$ and for large enough $n$,
 $$ E_0( \underset{ x;L_{x;n} >0}{\prod }  L_{x;n}^{-\alpha}  \ \  ) 
 \succeq  e^{-n^{\frac{d}{d+2}}  ln(n)^{\frac{2}{d+2} }}  . $$
 \end{prop}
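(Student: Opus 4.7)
The plan is to combine concavity of the logarithm with a confinement strategy, forcing the walk to remain in a ball of carefully chosen radius so that the number of visited sites $N_n$ is controlled. Starting from
$$\prod_{x;L_{x;n}>0} L_{x;n}^{-\alpha} = \exp\!\Bigl(-\alpha \sum_{x;L_{x;n}>0}\ln L_{x;n}\Bigr),$$
Jensen's inequality applied to the concave function $\ln$ with uniform weights $1/N_n$ (using $\sum_{x;L_{x;n}>0} L_{x;n}=n$) yields
$$\sum_{x;L_{x;n}>0} \ln L_{x;n} \leq N_n \ln(n/N_n),$$
hence the deterministic pointwise bound $\prod L_{x;n}^{-\alpha} \geq \exp(-\alpha N_n \ln(n/N_n))$.

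Next, I would restrict the expectation to the confinement event $A_m := \{\sup_{0\leq i\leq n} D(0,X_i)\leq m\}$. On this event every visited site lies in $B_m(\C)\subseteq B_m(\Z^d)$, so $N_n \leq |B_m(\C)| \leq c m^d$ (the latter being the trivial upper bound). Since $y\mapsto y\ln(n/y)$ is increasing on $[0,n/e]$, provided $m$ is chosen with $cm^d \leq n/e$ we can upgrade this to $N_n\ln(n/N_n) \leq cm^d\ln(n/(cm^d))$. Combined with the confinement lower bound from proposition~5.2 of \cite{KL}, namely $\P_0^\omega(A_m) \geq e^{-c(m+n/m^2)}$, this gives
$$\E_0^\omega\!\Bigl(\prod_{x;L_{x;n}>0} L_{x;n}^{-\alpha}\Bigr) \geq \exp\bigl(-\alpha c m^d \ln(n/(cm^d)) - c(m+n/m^2)\bigr).$$

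The last step is to optimize over $m$. Balancing the two dominant exponents $m^d\ln(n/m^d)$ and $n/m^2$ suggests $m^{d+2}\approx n/\ln(n/m^d)$, solved to leading order by $m \approx (n/\ln n)^{1/(d+2)}$. Plugging this choice into both terms makes each of order $n^{d/(d+2)}\ln(n)^{2/(d+2)}$, while the linear $m$ contribution is polynomially smaller and the constraint $cm^d \leq n/e$ is automatic for large $n$. This yields the claimed bound $e^{-cn^{d/(d+2)}\ln(n)^{2/(d+2)}}$. I do not expect a serious obstacle: the only nontrivial input is the Nash-type confinement estimate from \cite{KL}, and once this is in hand the rest is a one-variable optimization together with the trivial volume upper bound on balls in $\Z^d$.
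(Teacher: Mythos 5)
Your proposal is correct and follows essentially the same route as the paper: the concavity (Jensen) bound $\prod L_{x;n}^{-\alpha}\geq e^{-\alpha N_n\ln(n/N_n)}$, restriction to the confinement event with the estimate $\P_0^\omega(\sup_{i\leq n}D(0,X_i)\leq m)\geq e^{-c(m+n/m^2)}$ from \cite{KL}, the monotonicity of $y\mapsto y\ln(n/y)$ (stated in the paper as $x\mapsto \ln(x)/x$ decreasing on $[e,\infty)$) together with $N_n\leq |B_m(\C)|\leq cm^d$, and the choice $m=(n/\ln n)^{1/(d+2)}$. No gaps; this matches the paper's argument.
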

 \begin{rem} In the proof of the lower bound, we
 have only used the assumption  that  $\alpha \geq 0$, so this
 bound is valid for all  $\alpha \geq 0.$
\end{rem}
So the second assertion of Theorem \ref{ext}
follows from propositions  \ref{f21} and
\ref{f22}.\\
\\
\\
\bf{  Acknowledgment}\\

 \it{The author would like to thank Pierre Mathieu and Anna Erschler  for usefull remarks.}




\begin{thebibliography}{99999}




\bibitem[1]{coulhon} COULHON T. {\em Ultracontractivity and Nash Type inequalities}, 

{Journal of functional analysis, Vol. 141,  p 510-539} (1996)




\bibitem[4]{ershdur} ERSHLER A. , {\em Isoperimetry for wreath products 
for markov chains and multiplicity of self intersection of random walk.
}, { Proba Theory Related Fields }






\bibitem[5]{ersh} ERSHLER A. , {\em On isoperimetric profiles of

finitely generated groups}, { Geometria Dedicata 100 p 157-171}

(2003)

\bibitem[6]{pierre} MATHIEU P.and REMY E. {\em Isoperimetry and heat

kernel  decay on percolation clusters}, {Annals of Probability, 32 1A, p 100-128, } (2004).

\bibitem[6]{KL} RAU C. {\em  Sur le nombre de points visités par une
marche aléatoire sur un amas infini de percolation.}, { Bulletin de la
SMF}(2006).

\bibitem[6]{rothese} RAU C. {\em  Marches aléatoires sur un amas infini
de percolation.}, {thèse }(2006).


\bibitem[7]{SPbook} SALOFF-COSTE L.and PITTET C. {\em  A survey on the relationships between

volume growth, isoperimetry, and the behaviour of simple  random walks on Caley graphs, with example.},

 {preprint} (2001).



\bibitem[8]{woess} WOESS W.  {\em Random walks on
infinite graphsand groups,}

 {book} (2000).


\end{thebibliography}

%
%
%
%
%
%
%

\end{document}